\newtheorem{assumption}[theorem]{Assumption}
\newenvironment{example}{\refstepcounter{theorem} {\em Example} \thetheorem.}{}
\newenvironment{remark}{\refstepcounter{theorem} {\em Remark} \thetheorem.}{}
\newcommand{\derivd}{d}
\newcommand{\mZ}{\mathbb{Z}}
\newcommand{\mC}{\mathbb{C}}
\newcommand{\mR}{\mathbb{R}}
\newcommand{\mT}{\mathbb{T}}
\newcommand{\fC}{\mathfrak{C}}
\newcommand{\fP}{\mathfrak{P}}
\newcommand{\PPC}{\bar{\mathfrak{P}}_+}
\newcommand{\CP}{\mathfrak{C}_+}
\newcommand{\dm}{\derivd m}
\newcommand{\psp}{\derivd \mu}
\newcommand{\LOne}{L^1(\mT^d)}
\newcommand{\LInf}{L^\infty(\mT^d)}
\newcommand{\kb}{\mathbf  k}
\newcommand{\Nb}{\mathbf  N}
\newcommand{\lbb}{\lb}
\newcommand{\lb}{{\boldsymbol\ell}}
\newcommand{\thetab}{{\boldsymbol \theta}}
\newcommand{\E}{{\mathbb E}}
\newcommand{\zetab}{{\boldsymbol \zeta}}
\def\iddots{\mathinner{\mkern1mu\raise\p@
\vbox{\kern7\p@\hbox{.}}\mkern2mu
\raise4\p@\hbox{.}\mkern2mu\raise7\p@\hbox{.}\mkern1mu}}
\newenvironment{proofWithName}[1]{{\em #1.}}{\endproof}
\title{Multidimensional Rational Covariance Extension with Applications to Spectral Estimation and \\ Image Compression\thanks{This work was supported by the Swedish Research Council (VR), the Swedish Foundation of Strategic Research (SSF), and the Center for Industrial and Applied Mathematics (CIAM).}} 
\author{Axel Ringh\footnotemark[2] \and Johan Karlsson\footnotemark[2] \and Anders Lindquist\footnotemark[3] \footnotemark[2] 
}
\begin{document}

\maketitle
\slugger{sicon}{xxxx}{xx}{x}{x--x}

\renewcommand{\thefootnote}{\fnsymbol{footnote}}
\footnotetext[2]{Division of Optimization and Systems Theory, Department of Mathematics, KTH  Royal Institute of Technology, 100 44 Stockholm, Sweden. (\email{aringh@kth.se}, \email{johan.karlsson@math.kth.se})}
\footnotetext[3]{Departments of Automation and Mathematics, Shanghai Jiao Tong University,
200240 Shanghai, China. (\email{alq@kth.se})}

\renewcommand*{\thefootnote}{\arabic{footnote}}
\setcounter{footnote}{0}

\begin{abstract}
The rational covariance extension problem (RCEP)  is an important problem in systems and control occurring in such diverse fields as control, estimation, system identification, and signal and image processing, leading to many fundamental theoretical questions. In fact, this inverse problem is a key component in many identification and signal processing techniques and plays a fundamental role in prediction, analysis, and modeling of systems and signals. It is well-known that the RCEP can be reformulated as a (truncated) trigonometric moment problem subject to a rationality condition. In this paper we consider the more general multidimensional trigonometric moment problem with a similar rationality constraint. This generalization creates many interesting new mathematical questions and also provides new insights into the original one-dimensional problem. A key concept in this approach is the complete smooth parametrization of all solutions, allowing solutions to be tuned to satisfy additional design specifications without violating the complexity constraints.  As an illustration of the potential of this approach we apply our results to multidimensional spectral estimation and image compression. 
This is just a first step in this direction, and we expect that more elaborate tuning strategies will enhance our procedures in the future.
\end{abstract}

\begin{keywords}Covariance extension, trigonometric moment problem, convex optimization, generalized entropy, multidimensional spectral estimation, image compression.\end{keywords}


\pagestyle{myheadings}
\thispagestyle{plain}
\markboth{A.~RINGH, J.~KARLSSON, AND A.~LINDQUIST}{MULTIDIMENSIONAL RATIONAL COVARIANCE EXTENSION}

\section{Introduction}\label{sec:intro}

In this paper we consider the (truncated) multidimensional trigonometric moment problem with a certain complexity constraint. 
Many problems in multidimensional systems theory including realization, control, and identification problems, can be cast in this framework \cite{bose2003multidimensional}. Other applications of this type are  image processing \cite{ekstrom1984digital} and spectral estimation in radar, sonar, and medical imaging \cite{stoica1997introduction}.

More precisely, given a set of complex numbers $c_\kb$, $\kb\in\Lambda$,  where $\kb:=(k_1,\ldots, k_d)$ is a vector-valued index belonging to a specified index set $\Lambda\subset \mZ^d$, find a nonnegative bounded measure $\psp$ such that 
\begin{equation} \label{eq:Cov}
c_\kb = \int_{\mT^d} e^{i(\kb,\thetab)} \psp (\thetab) \quad\text{for all $\kb\in\Lambda$},
\end{equation}
where $\mT:=(-\pi,\pi]$, $\thetab:=(\theta_1,\ldots, \theta_d)\in\mT^d$, and $(\kb,\thetab):=\sum_{j=1}^d k_j\theta_j$ is the scalar product in $\mR^d$. 
Moreover, let $e^{i\thetab}:=(e^{i\theta_1},\ldots, e^{i\theta_d})$.
By the Lebesgue decomposition \cite[p. 121]{rudin1987real}, the measure $\psp$ can be  decomposed in a unique fashion as 
\begin{subequations}\label{constrained_measure}
\begin{equation}
\label{dmu}
\psp(\thetab) = \Phi(e^{i\thetab})\dm(\thetab) + \derivd \hat{\mu}(\thetab)
 \end{equation}
into an absolutely continuous part $\Phi\dm$  with spectral density $\Phi$ and Lebesgue measure $$dm (\thetab):=(1/2\pi)^d\prod_{j=1}^d d\theta_j$$
and  a  singular part $\derivd\hat{\mu}$ containing, e.g., spectral lines.
This is an inverse problem, which in general has infinitely many solutions if one exists. A first problem of interest to us in this paper is how to smoothly parametrize the family of all solutions that satisfy the rational complexity constraint 
\begin{equation}\label{eq:rat}
\Phi(e^{i\thetab})=\frac{P(e^{i\thetab})}{Q(e^{i\thetab})},\quad \text{where $P, Q \in {\bar{\fP}_+}\backslash \{0\}$},
\end{equation}
\end{subequations}
where $\fP_+$ is the convex cone of positive trigonometric polynomials 
\begin{equation}
\label{Ptrigpol}
P(e^{i\thetab})=\sum_{\kb\in \Lambda}p_\kb e^{-i(\kb,\thetab)}
\end{equation}
that are positive for all $\thetab\in \mT^d$, and $\bar{\fP}_+$ is its closure; $\fP_+$ will be called the {\em positive cone}.  Moreover, we use the notation  $\partial\fP_+:=\bar{\fP}_+\backslash\fP_+$ for its boundary; i.e., the subset of  $P\in\bar{\fP}_+$ that are zero in at least one point. In this paper we develop a theory based on convex optimization for this problem. 

%

For $d=1$ and $\Lambda=\{0,1,\dots,n\}$ this  trigonometric moment problem with complexity constrains is well understood, and it has a solution with $\derivd \hat{\mu}=0$ if and only if the Toeplitz matrix
\begin{equation*}
T(c) = \begin{bmatrix}
c_0    & c_{-1}  & \ldots & c_{-n} \\
c_1    & c_0     &  & c_{-n+1} \\
\vdots &         & \ddots & \vdots \\
c_n    & c_{n-1} & \ldots & c_0
\end{bmatrix} 
\end{equation*}
is positive definite \cite{LindquistPicci2015}. Such a sequence, $c_0,\ldots, c_n$, will therefore be called a positive sequence in this paper.

In his pioneering work on spectral estimation, J.P. Burg observed that among all spectral densities $\Phi$ satisfying the moment constraints
\begin{subequations}\label{eq:burg}
\begin{equation}
\label{scalarmoments}
c_k =  \int_{\mT} e^{ik\theta} \Phi(e^{i\theta})\frac{d\theta}{2\pi}, \quad k=0,1,\dots,n,
\end{equation}
the one with maximal entropy
\begin{equation}
\label{entropy}
\int_{\mT} \log \Phi(e^{i\theta})\frac{d\theta}{2\pi}
\end{equation}
\end{subequations}
is of the form $\Phi(e^{i\theta})=1/Q(e^{i\theta})$, where $Q(e^{i\theta})$ is a positive trigonometric polynomial \cite{burg1967maximum, burg1975maximum}. Later, in 1981, R.E. Kalman posed the {\em rational covariance extension problem (RCEP)} \cite{kalman1981realization}:
given a finite covariance sequence 
$c_0,\ldots,c_n,$
determine all infinite extensions $c_{n+1},c_{n+2},\ldots$  such that 
\[
\Phi(e^{i\theta})=\sum_{k=-\infty}^{\infty} c_ke^{-ik\theta}
\]
is a positive rational function of degree bounded by $2n$. This problem, which is 
important in systems theory \cite{LindquistPicci2015}, is precisely a (one-dimensional) trigonometric moment problem with the complexity constraint \eqref{eq:rat}. The designation `covariance' emanates from the fact that $c_0,c_1,c_2,\dots,$ can be interpreted as the covariance lags  $\E\{y(t+k) \overline{y(t)} \}=c_k$  of a wide-sense stationary stochastic process $y$ with  spectral density $\Phi$.

In 1983, T.T. Georgiou \cite{georgiou1983partial} (also see \cite{georgiou1987realization}) proved that to each positive covariance sequence and positive numerator polynomial $P$, there exists a rational covariance extension of the sought form \eqref{eq:rat}. He also conjectured that this extension is unique and hence gives a complete parameterization of all rational extensions of  degree bounded by $2n$. This conjecture was first proven in \cite{byrnes1995acomplete}, 
where it was also shown that the complete parameterization is smooth, allowing for tuning. The proofs in \cite{georgiou1983partial,georgiou1987realization,byrnes1995acomplete} were nonconstructive, using topological methods. Later a constructive proof was given in \cite{byrnes1998aconvex,byrnes2001fromfinite}, leading to an approach based on convex optimization. Here $\Phi$ is obtained as the maximizer of a generalized entropy functional
\begin{equation}
\label{eq:entropyScalar}
\int_{\mT} P(e^{i\theta}) \log \Phi(e^{i\theta})\frac{d\theta}{2\pi}
\end{equation}
subject to the moment conditions \eqref{scalarmoments}, and the problem is reduced to solving a dual convex optimization problem. 
Since then, this approach have been extensively studied \cite{georgiou1999theinterpolation, byrnes2001fromfinite, byrnes2001cepstral, byrnes2002identifyability, enqvist2004aconvex, nurdin2006new, lindquist2013thecirculant, ringh2014spectral, ringh2015afast, byrnes2000anewapproach, zorzi2014rational, enqvist2007approximative, pavon2013geometry}, and the approach has also been generalized to a quite complete theory for scalar moment problems \cite{byrnes2001ageneralized, byrnes2003aconvex, georgio2003kullback, byrnes2006generalizedinterpolation, byrnes2006thegeneralized}. Moreover a number of multivariate counterparts, i.e., when $\Phi$ is matrix-valued, have also been solved \cite{ferrante2008hellinger, georgiou2006relative, picci2008modelling, blomqvist2003matrix, ramponi2009aglobally, lindquist2013onthemultivariate, zorzi2014anewfamily, avventi2011spectral}.

A considerable amount of research has also been done in the area of multidimensional spectral estimation; for example, Woods \cite{woods1976two-dimensionalmark}, Ekstrom and Woods \cite{woods1976two-dimensionalspec}, Dickinson \cite{dickinson1980two-dimensional}, and Lev-Ari \emph{et al.} \cite{lev-ari1989multidimensional} to mention a few.
Of special interest is also results by Lang and McClellan \cite{lang1982multidimensional, lang1983spectral, mcclellan1982multi-dimensional, mcclellan1983duality, lang1982theextension, lang1981spectral}, as they consider a similar entropy functional. 
In many of these areas it seems natural to consider rational models.
Nevertheless, the multidimensional version of the RCEP has only been considered at a few instances, for the two-dimensional case in \cite{georgiou2006relative, georgiou2005solution} and in the more general setting of moment problems with arbitrary basis functions in our recent paper \cite{karlsson2015themultidimensional}.

The purpose of this paper is to extend the theory of rational covariance extension from the one-dimensional to the general $d$-dimensional case  and to develop methods for multidimensional spectral estimation. In Section~\ref{sec:mainRes} we summarize the main theoretical results of the paper. This includes the main theorem characterizing the optimal solutions to the weighted entropy functional, which is then proved in Section~\ref{sec:multidimRCEP}. In Section~\ref{sec:corollary} we prove that under certain assumptions the problem is well-posed in the sense of Hadamard and provide comments and examples related to these assumptions. 
In Section~\ref{sec:cepstMatch} we consider simultaneous matching of covariance lags and logarithmic moments, and  Section~\ref{sec:periodic} is devoted to a discrete version of the problem, where the measure $\psp$ consists of discrete point masses placed equidistantly in a discrete grid in $\mT^d$. This is a generalization  to the multidimensional case of recent results in \cite{lindquist2013thecirculant} and is motivated by computational considerations. In fact,  these discrete solutions provide approximations to solutions to moment problems with absolutely continuous measures and allow for fast arithmetics based on the fast Fourier transform (FFT) (cf. \cite{ringh2015afast}). Finally, Sections~\ref{sec:sysIdEx} and \ref{sec:imageComp} are devoted to two examples of how the theory can be applied; the first in system identification and the second in image compression.

\section{Main results}\label{sec:mainRes}

Given the moments $\{c_\kb\}_{\kb\in \Lambda}$, the problem under consideration is to find a positive measure \eqref{constrained_measure} of bounded variation satisfying the moment constraint \eqref{eq:Cov}. 
Let us pause to pin down the structure of the index set $\Lambda$. In view of \eqref{eq:Cov}, we have  $c_{-\kb} = \bar{c}_\kb$, where $\, \bar{} \,$ denotes complex conjugation. Revisiting the one-dimensional result \cite{byrnes2003aconvex,Byrnes-L-09,byrnes2006thegeneralized} for moment problems with arbitrary basis functions, we observe that the theory holds also for sequences with ``gaps'', e.g., for a sequence $c_0, \ldots, c_{k-1}, c_{k+1}, \ldots, c_n$. As seen in \cite{karlsson2015themultidimensional} this observation equally applies to the multidimensional case. Therefore, we shall consider covariance sequences $\{c_\kb\}_{\kb\in \Lambda}$, where $\Lambda \subset \mathbb{Z}^d$ is any finite index set such that $0 \in \Lambda$ and $- \Lambda = \Lambda$. 
We will denote the cardinality of $ \Lambda$ by $|\Lambda|$. Further, let $n_j=\max\{k_j\,|\, \kb\in \Lambda\}$ denote the maximum range of $\Lambda$ in dimension $j$. 

Next, given the inner product 
\begin{equation*}
\langle c, p \rangle = \sum_{\kb \in \Lambda} c_\kb \bar{p}_\kb,
\end{equation*}
we define the open convex cone 
\begin{equation*}
{\fC}_+ := \left\{ c \mid \langle c, p \rangle > 0, \quad \text{for all $P \in \bar{\mathfrak{P}}_+ \setminus \{0\}$} \right\},
\end{equation*}
the closure of which,  $\bar{\fC}_+$, is the dual cone of $\bar{\mathfrak{P}}_+$, with boundary  $\partial \fC_+$. 

We now extend the domain of the generalized entropy functional in \eqref{eq:entropyScalar} to multidimensional nonnegative measures of the type \eqref{constrained_measure} and consider functionals
\begin{equation}\label{primalfunctional}
\mathbb{I}_P(d\mu) = \int_{\mT^d} P(e^{i\thetab}) \log \Phi(e^{i\thetab}) \,\dm(\thetab) ,
\end{equation}
where $\Phi$ is the absolutely continuous part of $d\mu$.%
\footnote{{\color{black}Note that the absolutely continuous part is uniquely defined by the Lebesgue decomposition, and hence the function $\mathbb{I}_P(d\mu)$ is uniquely defined. Moreover, this definition of $\mathbb{I}_P(d\mu)$ can be motivated by the fact that $\lim_{n \to \infty} \int_{\mT^d} \log(\Phi(e^{i\thetab}) + f_n(\thetab)) dm(\thetab) = \int_{\mT^d} \log(\Phi(e^{i\thetab})) dm(\thetab)$ for any log-integrable $\Phi$ and nonnegative ``good kernel" $f_n(\thetab)$ (see, e.g., \cite[p. 48]{stein2003fourier}). See also the discussion in Section \ref{subsec:commentsAndExamples}.}}
This functional is concave, but not strictly concave since the singular part of the measure does not influence the value. This leads to the optimization problem to maximize \eqref{primalfunctional} subject to the moment constraints \eqref{eq:Cov}. Since the constraints are linear,  this is a convex problem. However, as it is an infinite-dimensional optimization problem, it is more convenient to work with the dual problem, which has a finite number of variables but an infinite number of constraints. In fact, the dual problem amounts to minimizing 
\begin{equation}
\label{dualfunctional}
\mathbb{J}_P(Q) = \langle c, q \rangle - \int_{\mathbb{T}^d} P(e^{i\thetab}) \log Q(e^{i\thetab}) \dm
\end{equation}
over all $Q\in\bar{\mathfrak{P}}_+$, and hence $Q(e^{i\thetab})\ge 0$ for all $\thetab\in \mT^d$. Note that \eqref{dualfunctional} takes an infinite value for $Q\equiv 0$. 

\begin{theorem}\label{theo:conjecture}
For every $c \in \mathfrak{C}_+$ and $P \in \bar{\mathfrak{P}}_+ \setminus \{0\}$ the functional \eqref{dualfunctional} is strictly convex and has a unique minimizer $\hat{Q} \in \bar{\mathfrak{P}}_+ \setminus \{0\}$. 
 Moreover, there exists a unique $\hat{c} \in \partial \mathfrak{C}_+$ and a nonnegative singular measure $d \hat{\mu}$
with support $\supp(d \hat{\mu}) \subseteq \{ \thetab \in \mathbb{T}^d  \mid \hat{Q}(e^{i\thetab}) = 0 \}$  such that
\begin{equation*}
c_{\kb} = \int_{\mathbb{T}^d} e^{i (\kb,\thetab)} \left( \frac{P}{\hat Q}  \dm  + d \hat{\mu} \right)\, \text{ for all } \kb \in \Lambda
\end{equation*}
and
\begin{equation*}
\hat{c}_\kb = \int_{\mathbb{T}^d} e^{i (\kb,\thetab)} d \hat{\mu}, \text{ for all } \kb \in \Lambda.
\end{equation*}
For any such $d\hat{\mu}$, the measure $\psp(\thetab) = (P(e^{i\thetab})/\hat{Q}(e^{i\thetab}))\dm(\thetab) + d\hat{\mu}(\thetab)$
 is an optimal solution to the problem to maximize \eqref{primalfunctional} subject to the moment constraints \eqref{eq:Cov}. Moreover, $d \hat{\mu}$ can be chosen with support in at most $|\Lambda|-1$ points.
\end{theorem}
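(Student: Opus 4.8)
The plan is to extract everything from the dual problem, in the spirit of the one-dimensional treatment in \cite{byrnes1998aconvex,byrnes2001fromfinite}. \textbf{Strict convexity and a unique minimizer $\hat Q$.} The term $\langle c,q\rangle$ is linear, and $Q\mapsto-\int_{\mathbb{T}^d}P\log Q\,\dm$ is convex because $-\log$ is; it is \emph{strictly} convex on $\bar{\mathfrak{P}}_+\setminus\{0\}$ since two distinct trigonometric polynomials with spectrum in $\Lambda$ differ on a set of positive Lebesgue measure while $P$, being a nonzero trigonometric polynomial, is positive a.e. Hence uniqueness of a minimizer is automatic. For existence the hypothesis $c\in\mathfrak{C}_+$ is essential: on the finite-dimensional space of trigonometric polynomials supported on $\Lambda$, compactness of the unit sphere gives $\delta>0$ with $\langle c,q\rangle\ge\delta\|Q\|$ for every $Q\in\bar{\mathfrak{P}}_+$, whereas $\int_{\mathbb{T}^d}P\log Q\,\dm$ grows at most like $\log\|Q\|$; hence $\mathbb{J}_P$ is coercive on $\bar{\mathfrak{P}}_+$. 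It is lower semicontinuous (Fatou) and finite somewhere ($\mathbb{J}_P(1)=c_0<\infty$), so a minimizing sequence is bounded and a subsequential limit $\hat Q\in\bar{\mathfrak{P}}_+$ attains the infimum; $\hat Q\neq0$ because $\mathbb{J}_P(0)=+\infty$, and $\mathbb{J}_P(\hat Q)<\infty$ since the infimum is finite.

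\textbf{First-order conditions.} Differentiating $s\mapsto\mathbb{J}_P((1+s)\hat Q)$ at $s=0$ gives the normalization $\langle c,\hat q\rangle=\int_{\mathbb{T}^d}P\,\dm$. Perturbing towards an arbitrary $Q\in\bar{\mathfrak{P}}_+$, using $\mathbb{J}_P(\hat Q)\le\mathbb{J}_P((1-t)\hat Q+tQ)$, dividing by $t$, and letting $t\downarrow0$ with a monotone convergence argument (the relevant difference quotients increase to their limit and are dominated below at $t=\tfrac12$ by an $L^1$ function) yields both that $P/\hat Q\in L^1(\mathbb{T}^d)$ and
\[
\int_{\mathbb{T}^d}\frac{P(e^{i\thetab})}{\hat Q(e^{i\thetab})}\,Q(e^{i\thetab})\,\dm(\thetab)\ \le\ \langle c,q\rangle\qquad\text{for all }Q\in\bar{\mathfrak{P}}_+.
\]
Set $\tilde c_\kb:=\int_{\mathbb{T}^d}e^{i(\kb,\thetab)}(P/\hat Q)\,\dm$ and $\hat c:=c-\tilde c$. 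Since the polynomials in $\bar{\mathfrak{P}}_+$ are real-valued, $\int_{\mathbb{T}^d}(P/\hat Q)Q\,\dm=\langle\tilde c,q\rangle$, so the displayed inequality says $\langle\hat c,q\rangle\ge0$ for all $Q\in\bar{\mathfrak{P}}_+$, i.e. $\hat c\in\bar{\mathfrak{C}}_+$; and the normalization gives $\langle\hat c,\hat q\rangle=0$ with $\hat q\neq0$, hence $\hat c\in\partial\mathfrak{C}_+$. Uniqueness of $\hat c$ is inherited from that of $\hat Q$.

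\textbf{The singular measure, its atoms, and primal optimality.} The moment cone of nonnegative measures on $\mathbb{T}^d$ is closed (representing measures have total mass equal to the zeroth moment, so a minimizing sequence has a weak-$*$ convergent subsequence), and by the bipolar theorem it coincides with $\bar{\mathfrak{C}}_+$; hence there is a nonnegative measure $d\hat{\mu}$ with moments $\hat c$. From $\int_{\mathbb{T}^d}\hat Q\,d\hat{\mu}=\langle\hat c,\hat q\rangle=0$ and $\hat Q\ge0$ it follows that $\supp(d\hat{\mu})\subseteq\{\thetab:\hat Q(e^{i\thetab})=0\}$, and adding moments gives $c_\kb=\tilde c_\kb+\hat c_\kb$, so $\psp=(P/\hat Q)\,\dm+d\hat{\mu}$ satisfies \eqref{eq:Cov}. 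On $\{\hat Q=0\}$ the vectors $(e^{i(\kb,\thetab)})_{\kb\in\Lambda}$ obey the single real linear relation $\hat Q(e^{i\thetab})=0$, so they span a subspace of real dimension at most $|\Lambda|-1$, and Carathéodory's theorem (or an extreme-point argument for the weak-$*$ compact set of representing measures) lets $d\hat{\mu}$ be chosen supported in at most $|\Lambda|-1$ points. Finally, for any feasible $d\nu$ with density $\Phi$ and any $Q\in\bar{\mathfrak{P}}_+$ one has $\langle c,q\rangle=\int_{\mathbb{T}^d}Q\,d\nu\ge\int_{\mathbb{T}^d}Q\Phi\,\dm$, and combining this with $\log x\le x-1$ yields the weak-duality bound $\mathbb{I}_P(d\nu)\le\mathbb{J}_P(Q)+\int_{\mathbb{T}^d}P(\log P-1)\,\dm$; this bound is attained at $(\psp,\hat Q)$ precisely because $\langle c,\hat q\rangle=\int_{\mathbb{T}^d}P\,\dm$, which closes the gap and shows $\psp$ maximizes $\mathbb{I}_P$ subject to \eqref{eq:Cov}.

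\textbf{Main obstacle.} The technical heart is the existence of $\hat Q$ in the noncompact cone $\bar{\mathfrak{P}}_+$ together with a rigorous derivation of the first-order conditions when $\hat Q$ may lie on $\partial\mathfrak{P}_+$: there $P/\hat Q$ and the directional difference quotients involve a priori non-integrable quantities, and the passage to the limit must be justified via monotone convergence and the log-integrability $\int_{\mathbb{T}^d}P\log\hat Q\,\dm>-\infty$ (equivalently $\mathbb{J}_P(\hat Q)<\infty$). In dimension $d\ge2$ one should also verify carefully that the truncated trigonometric moment cone is closed, but the weak-$*$ compactness argument above takes care of that.
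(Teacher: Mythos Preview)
Your argument is correct and follows the same overall architecture as the paper's Section~3.1: coercivity plus lower semicontinuity (Fatou together with finiteness of the Mahler measure) for existence of $\hat Q$, strict convexity for uniqueness, first-order optimality for the decomposition $c=\tilde c+\hat c$, and a weak-duality computation for primal optimality. The one genuinely different step is how the singular measure is produced. The paper treats the dual problem itself as a constrained minimization (constraint $Q\ge0$), invokes the infinite-dimensional KKT theorem \cite[p.~249]{luenberger1969optimization} to obtain a nonnegative measure-valued multiplier $d\tilde\mu$, and reads off stationarity $c_\kb=\int e^{i(\kb,\thetab)}\bigl((P/\hat Q)\,\dm+d\tilde\mu\bigr)$ and complementary slackness $\int\hat Q\,d\tilde\mu=0$ directly. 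You instead derive the variational inequality $\int(P/\hat Q)\,Q\,\dm\le\langle c,q\rangle$ by a hands-on perturbation argument, extract $\hat c=c-\tilde c\in\bar{\mathfrak C}_+$, and then use closedness of the truncated trigonometric moment cone (via weak-$*$ compactness) together with the bipolar theorem to produce a representing measure for $\hat c$. This buys you a more self-contained argument that avoids verifying a constraint qualification for KKT in the space of measures; the paper's route is terser but leans on an abstract Lagrange-multiplier existence result. For the $|\Lambda|-1$ atom bound the paper simply cites \cite{lang1982theextension}, whereas your Carath\'eodory argument on the hyperplane $\{\langle\,\cdot\,,\hat q\rangle=0\}$ is essentially the content of that reference.
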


\begin{corollary}
Let $c \in \mathfrak{C}_+$. Then, for any 
\begin{displaymath}
\psp =\frac{P}{Q}\dm,\quad P, Q \in {\bar{\fP}_+}\backslash \{0\}
\end{displaymath}
satisfying the moment condition \eqref{eq:Cov}, $Q$ is the unique minimizer over $\bar{\fP}_+$ of the dual functional \eqref{dualfunctional}.
\end{corollary}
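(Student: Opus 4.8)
The plan is to show directly that $\mathbb{J}_P(Q') \ge \mathbb{J}_P(Q)$ for every $Q' \in \bar{\fP}_+$, so that $Q$ is a minimizer, and then to invoke the strict convexity of $\mathbb{J}_P$ from Theorem~\ref{theo:conjecture} to obtain uniqueness (which, comparing with that theorem, also forces $Q = \hat{Q}$). The whole argument reduces to a single moment identity together with the elementary inequality $\log t \le t - 1$.

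First I would record that
\begin{equation*}
\int_{\mT^d} Q'(e^{i\thetab})\,\frac{P(e^{i\thetab})}{Q(e^{i\thetab})}\,\dm = \langle c, q' \rangle \qquad \text{for every } Q' \in \bar{\fP}_+ .
\end{equation*}
Since $Q$ is a nonzero trigonometric polynomial its zero set is Lebesgue-null, so $\psp = (P/Q)\dm$ puts no mass there, and $Q'(P/Q)$ is $\psp$-integrable because $Q'$ is bounded and $\psp$ is a finite measure (the latter being implicit in the hypothesis that $\psp$ satisfies \eqref{eq:Cov}). Expanding $Q'$ in its Fourier coefficients and using \eqref{eq:Cov} together with $c_{-\kb} = \bar{c}_\kb$, the conjugate symmetry $q'_{-\kb}=\overline{q'_\kb}$ of the coefficients of the real-valued polynomial $Q'$, and the fact that $\langle c, q'\rangle$ is real then gives the identity. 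In particular, taking $Q' = Q$ yields $\int_{\mT^d} P\,\dm = \langle c, q\rangle$, and $\mathbb{J}_P(Q) = \langle c, q\rangle - \int_{\mT^d} P\log Q\,\dm$ is finite since $P$ is bounded and $\log Q \in \LOne$ for $Q \in \bar{\fP}_+\setminus\{0\}$.

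For an arbitrary $Q' \in \bar{\fP}_+\setminus\{0\}$ I would then apply $\log t \le t - 1$ pointwise (a.e.) with $t = Q'(e^{i\thetab})/Q(e^{i\thetab}) \ge 0$, multiply by $P \ge 0$, and integrate:
\begin{equation*}
\int_{\mT^d} P\log\frac{Q'}{Q}\,\dm \;\le\; \int_{\mT^d} P\Big(\frac{Q'}{Q} - 1\Big)\dm \;=\; \langle c, q'\rangle - \langle c, q\rangle ,
\end{equation*}
where the equality uses the identity above applied to $Q'$ and to $Q$, and where every integral is finite because $\log P, \log Q, \log Q' \in \LOne$ and $\int_{\mT^d} P Q'/Q\,\dm = \langle c, q'\rangle < \infty$. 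Rearranging, $\mathbb{J}_P(Q') - \mathbb{J}_P(Q) = \langle c, q' - q\rangle - \int_{\mT^d} P\log(Q'/Q)\,\dm \ge 0$; since also $\mathbb{J}_P(0) = +\infty$, the polynomial $Q$ minimizes $\mathbb{J}_P$ over $\bar{\fP}_+$, and strict convexity of $\mathbb{J}_P$ makes this minimizer unique.

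I expect the one place that needs care to be the boundary case $Q \in \partial\fP_+$, where $P/Q$ is unbounded: what legitimizes integrating $\log t \le t-1$ against $P\,\dm$ (and gives finiteness of $\int_{\mT^d} P Q'/Q\,\dm$) is precisely that $\psp = (P/Q)\dm$ is a finite measure, so that $\int_{\mT^d} Q'\,\psp$ is finite for every bounded $Q'$. Everything else is routine, and nothing from the explicit construction in Theorem~\ref{theo:conjecture} is used beyond the strict convexity of $\mathbb{J}_P$.
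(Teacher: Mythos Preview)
Your proof is correct. The paper states this result as a corollary of Theorem~\ref{theo:conjecture} without giving a separate proof; the implicit argument is the duality established in Section~\ref{sec:multidimRCEP}: since $d\mu=(P/Q)\,dm$ is primal feasible, one has $\mathbb{I}_P(P/Q)=\mathbb{J}_P(Q)+\int_{\mT^d}P(\log P-1)\,dm$ (using $\langle c,q\rangle=\int_{\mT^d}P\,dm$), so there is no duality gap at $Q$, hence $Q$ is dual optimal and, by strict convexity, equals $\hat{Q}$.

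Your direct argument via the pointwise inequality $\log t\le t-1$ is exactly this weak-duality step stripped of the Lagrangian formalism: the identity $\int_{\mT^d}Q'\,(P/Q)\,dm=\langle c,q'\rangle$ is precisely the stationarity/KKT condition with zero multiplier measure, and the integrated inequality is the weak-duality bound $\mathbb{I}_P\le \mathbb{J}_P+\text{const}$ specialized to $\Phi=P/Q$. So the approaches coincide in substance; yours is simply more self-contained because it avoids invoking the Lagrangian machinery and the KKT argument from Section~\ref{subsubsec:singMeasure}. Your handling of the boundary case $Q\in\partial\fP_+$ (finiteness of $\int_{\mT^d}Q'(P/Q)\,dm$ via boundedness of $Q'$ against the finite measure $(P/Q)\,dm$, and log-integrability of nonzero trigonometric polynomials) is the right justification.
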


This corollary implies that, for any $c \in \mathfrak{C}_+$, any measure $\psp$ with only absolutely continuous rational part matching $c$ can be obtained by solving \eqref{dualfunctional} for a suitable $P$. However, although $c \in \mathfrak{C}_+$, not all $P$ result in an absolutely continuous solution $\psp=(P/Q)dm$ that satisfies  \eqref{eq:Cov}. Nevertheless, the case when this happens is of particular interest. 

%

\begin{corollary}\label{theo:langMcclellan}
Suppose that  $d\leq 2$. Then, for any $c \in \mathfrak{C}_+$ and $P \in \mathfrak{P}_+$ there exists a $Q \in \mathfrak{P}_+$ such that $\psp=(P/Q)dm$ satisfies \eqref{eq:Cov}. Moreover this $Q$ is the unique solution to the strictly convex optimization problem to minimize the dual functional \eqref{dualfunctional}  over all $Q\in\mathfrak{P}_+$.
\end{corollary}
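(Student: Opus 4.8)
The plan is to deduce this corollary from Theorem~\ref{theo:conjecture}: the whole point is to show that when $d\le 2$ and $P\in\mathfrak{P}_+$ the optimal $\hat Q$ furnished there has no zeros on $\mT^d$, which forces the accompanying singular measure $d\hat\mu$ to vanish. First I would apply Theorem~\ref{theo:conjecture} with the given $c\in\mathfrak{C}_+$ and $P\in\mathfrak{P}_+\subset\bar{\mathfrak{P}}_+\setminus\{0\}$. This produces the unique minimizer $\hat Q\in\bar{\mathfrak{P}}_+\setminus\{0\}$ of \eqref{dualfunctional} over $\bar{\mathfrak{P}}_+$, a nonnegative singular measure $d\hat\mu$ with support contained in $\{\thetab\in\mT^d\mid\hat Q(e^{i\thetab})=0\}$, and the moment identity $c_\kb=\int_{\mT^d}e^{i(\kb,\thetab)}(P/\hat Q\,\dm+d\hat\mu)$ for $\kb\in\Lambda$. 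Reading off the $\kb=0$ equation shows in particular that $\int_{\mT^d}P/\hat Q\,\dm\le c_0<\infty$, i.e.\ $P/\hat Q\in\LOne$; the argument will hinge on this integrability.

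The key step is to rule out, when $d\le 2$, any zero of $\hat Q$ on $\mT^d$. Suppose $\hat Q(e^{i\thetab_0})=0$ for some $\thetab_0$. Since $\thetab\mapsto\hat Q(e^{i\thetab})$ is a smooth, nonnegative, real-valued function on the torus, $\thetab_0$ is a global minimum and hence a critical point, so the gradient of $\hat Q$ vanishes there; a second-order Taylor estimate then yields $\hat Q(e^{i\thetab})\le C\,|\thetab-\thetab_0|^2$ on some neighborhood $B$ of $\thetab_0$, with $C$ controlled by the bounded Hessian. Because $P\in\mathfrak{P}_+$ is continuous and strictly positive on the compact torus, $P\ge\delta>0$ for some constant $\delta$, so $P(e^{i\thetab})/\hat Q(e^{i\thetab})\ge\delta/(C|\thetab-\thetab_0|^2)$ on $B$. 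But the radial integral $\int_0^\varepsilon r^{d-3}\,dr$ diverges precisely when $d\le 2$, so $\int_B|\thetab-\thetab_0|^{-2}\,\dm=\infty$, contradicting $P/\hat Q\in\LOne$. Therefore $\hat Q$ never vanishes, i.e.\ $\hat Q\in\mathfrak{P}_+$.

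It then follows that $\{\thetab\in\mT^d\mid\hat Q(e^{i\thetab})=0\}=\emptyset$, so $d\hat\mu\equiv 0$ and $\psp=(P/\hat Q)\dm$ by itself satisfies \eqref{eq:Cov}, which gives the existence statement. For the optimization characterization, Theorem~\ref{theo:conjecture} already asserts that \eqref{dualfunctional} is strictly convex with the unique minimizer $\hat Q$ over the larger convex set $\bar{\mathfrak{P}}_+$; since $\hat Q\in\mathfrak{P}_+$, it remains the unique minimizer when the problem is restricted to the convex subset $\mathfrak{P}_+$, on which the functional stays strictly convex. This completes the plan.

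I expect the integrability estimate in the second step to be the only genuine obstacle, and it is precisely there that the hypothesis $d\le 2$ is indispensable: for $d\ge 3$ an isolated quadratic zero of $\hat Q$ yields a $1/\hat Q$ that is still locally integrable, so the argument breaks down and solutions with a nontrivial singular part $d\hat\mu$ genuinely occur. (The integrability $P/\hat Q\in\LOne$ used above is read off from the moment identity of Theorem~\ref{theo:conjecture}; alternatively it follows from a first-variation argument at $\hat Q$ in the direction of the constant polynomial $1\in\bar{\mathfrak{P}}_+$, whose one-sided directional derivative of \eqref{dualfunctional} would otherwise be $-\infty$, contradicting minimality.)
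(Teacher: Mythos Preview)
Your proposal is correct. The core observation is the same as the paper's: when $d\le 2$, a nonnegative trigonometric polynomial $Q$ with a zero on $\mT^d$ satisfies $\int_{\mT^d}Q^{-1}\,dm=\infty$ (the paper isolates this as Assumption~\ref{ass:divergingIntegral} and cites \cite{georgiou2005solution} for $d=2$; you instead sketch a direct proof via the quadratic Taylor bound at a zero and the divergence of $\int_0^\varepsilon r^{d-3}\,dr$). The difference lies in how this fact is exploited. The paper argues on the optimization side: at any boundary point $Q\in\partial\mathfrak{P}_+$ the directional derivative of $\mathbb J_P$ in the direction of the constant polynomial $1$ tends to $-\infty$, so $1$ is a descent direction and no boundary point can be optimal (this is Lemma~\ref{lem:noSolOnBoundary}). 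You instead read off $\int_{\mT^d}P/\hat Q\,dm\le c_0<\infty$ from the $\kb=0$ moment identity already supplied by Theorem~\ref{theo:conjecture}, and derive a contradiction with $\hat Q\in\partial\mathfrak P_+$ directly. Your route is a touch more economical since it uses only the conclusion of Theorem~\ref{theo:conjecture} without revisiting first-order optimality; the paper's route has the advantage of making the mechanism (``the boundary repels the minimizer'') transparent, which is reused later (e.g.\ in the cepstral and regularized problems). Amusingly, the alternative you mention in your final parenthetical remark is precisely the paper's argument.
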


This result can be deduced from the early work of Lang and McClellan \cite{lang1982multidimensional},
although they do not consider rational solutions explicitly, nor parameterizations of them.
Note  that Corollary~\ref{theo:langMcclellan} is only valid for $P \in \mathfrak{P}_+$, while Theorem \ref{theo:conjecture} holds for all $P \in \bar{\mathfrak{P}}_+ \setminus \{0\}$. This will be further discussed in Section~\ref{sec:corollary}, where the  proof of Corollary~\ref{theo:langMcclellan} will also be concluded.  


\subsection{Covariance and cepstral matching}\label{subsec:mainCeps}
It follows from Theorem~\ref{theo:conjecture} and Corollary~\ref{theo:langMcclellan} 
that $Q$ is completely determined by the pair $(c,P)$. For $d=1$ the choice  $P \equiv 1$ leads to Burg's formulation \eqref{eq:burg}, which has been termed the \emph{maximum-entropy} (ME) solution. 
On the other hand, better dynamical range of the spectrum can be obtained by taking advantage of the extra degrees of freedom in $P$. 
Several methods for selecting $P$ have been suggested in the one-dimensional setting.  Examples are methods  based on inverse problems as in \cite{karlsson2010theinverse, fanizza2008modeling, karlsson2008stability-preserving}, a linear-programming approach as in \cite{byrnes2001cepstral, byrnes2002identifyability}, and simultaneous matching of covariances and cepstral coefficients as in \cite{musicus1985maximum} and independently in \cite{byrnes2001cepstral, byrnes2002identifyability, enqvist2004aconvex, lindquist2013thecirculant}.  Here, in the multivariate setting,  we consider the selection of $P$  based on the simultaneous  matching of logarithmic moments.

We define the (real) {\em cepstrum\/} of a multidimensional spectrum as the (real) logarithm of its absolutely continuous part. The {\em cepstral coefficients\/} are the corresponding Fourier coefficients
\begin{equation}\label{eq:cepstrum}
\gamma_\kb= \int_{\mT^d} e^{i (\kb,\thetab)} \log\Phi(e^{i\thetab}) \dm (\thetab), \mbox{ for } \kb\in \Lambda\setminus \{0\} .
\end{equation}
For spectra that only have an absolutely continuous part this agrees with earlier definitions in the literature (see, e.g., \cite[pp. 500-507]{oppenheim1975digital} or \cite[Chapter 6]{deller2000discrete}).

Given a set of cepstral coefficients 
we now also enforce cepstral matching of the sought family of spectra. This means that we look for $\Phi = P/Q$ that also satisfy \eqref{eq:cepstrum}. Note that the index $\kb=0$ is not included in \eqref{eq:cepstrum}. In fact, for technical reasons, we shall set $\gamma_0=1$. Also to avoid trivial cancelations of constants in $P/Q$, we need to introduce the set
\begin{equation*}
\mathfrak{P}_{+,\circ} := \{ P \in \mathfrak{P}_{+} \; | \; p_{0} = 1 \}.
\end{equation*}

\begin{theorem}\label{theo:cepstral}
Let $\gamma_{\kb}$,  $\kb \in \Lambda\setminus\{0\}$, be any sequence of complex numbers such that $\gamma_{-\kb} = \bar{\gamma}_{\kb}$, and set $\gamma=\{\gamma_{\kb}\}_{\kb \in \Lambda}$ where $\gamma_0=1$. 
Then, for  $c \in \mathfrak{C}_+$,  the convex optimization problem (D) to minimize
\begin{equation}
\label{cevcepstrdual}
\mathbb{J}(P,Q) = \langle c, q \rangle - \langle \gamma, p \rangle + \int_{\mathbb{T}^d} P \log \left(\frac{P}{Q}\right) \dm 
\end{equation}
subject to $(P,Q) \in\bar{\mathfrak{P}}_{+,\circ}\times \bar{\mathfrak{P}}_+$ has an optimal solution $(\hat{P}, \hat{Q})$.
If such a solution  belongs to $\mathfrak{P}_{+, \circ} \times \mathfrak{P}_{+}$, then $\hat\Phi = \hat{P}/\hat{Q}$ satisfies the logarithmic moment condition \eqref{eq:cepstrum} and $d\mu=\hat\Phi dm$ the moment condition \eqref{eq:Cov}.
Moreover, $\hat\Phi$ is also an optimal solution to the  problem (P) to maximize
\begin{equation}
\label{cevcepstrprimal}
\mathbb{I}(\Phi) = \int_{\mathbb{T}^d} \log \Phi \, \dm
\end{equation}
subject to \eqref{eq:Cov} and \eqref{eq:cepstrum} for $d\mu=\Phi dm$.
Finally, if $d\leq 2$, then $\hat{P} \in \mathfrak{P}_{+, \circ}$  implies that $\hat{Q} \in \mathfrak{P}_{+}$.
\end{theorem}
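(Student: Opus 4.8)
\emph{Overview of the plan.} I would treat (D) as a finite-dimensional convex program, extract the covariance and cepstral matching from first-order optimality at an interior minimizer, transfer optimality to (P) through a weak-duality inequality, and invoke Corollary~\ref{theo:langMcclellan} for the last assertion. The useful observation is that for fixed $P$ the objective \eqref{cevcepstrdual} reads $\mathbb{J}(P,Q)=\mathbb{J}_P(Q)+\int_{\mathbb{T}^d}P\log P\,\dm-\langle\gamma,p\rangle$, where $\mathbb{J}_P$ is the dual functional \eqref{dualfunctional}; hence the inner minimization over $Q$ is precisely the one controlled by Theorem~\ref{theo:conjecture}.

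\emph{Existence.} First I would note that $\bar{\mathfrak{P}}_{+,\circ}$ is compact: if $P\ge 0$ and $p_0=1$ then $|p_\kb|=\bigl|\int_{\mathbb{T}^d}Pe^{i(\kb,\thetab)}\,\dm\bigr|\le\int_{\mathbb{T}^d}P\,\dm=1$, so $\bar{\mathfrak{P}}_{+,\circ}$ is a closed bounded subset of a finite-dimensional space, on which both $\langle\gamma,p\rangle$ and $\int_{\mathbb{T}^d}P\log P\,\dm$ are bounded. Next, $\mathbb{J}$ is convex and lower semicontinuous on the closed set $\bar{\mathfrak{P}}_{+,\circ}\times\bar{\mathfrak{P}}_+$; the only delicate term is $\int_{\mathbb{T}^d}P\log(P/Q)\,\dm$, but its integrand becomes nonnegative after subtracting the continuous affine quantity $\int_{\mathbb{T}^d}(P-Q)\,\dm$ (using $x\log(x/y)\ge x-y$) and is pointwise lower semicontinuous in $(P(\thetab),Q(\thetab))$, so Fatou's lemma yields joint lower semicontinuity. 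Finally $\mathbb{J}$ is coercive in the $Q$-direction, uniformly in $P$: Jensen's inequality gives $\int_{\mathbb{T}^d}P\log(P/Q)\,\dm\ge-\log q_0$, while $c\in\mathfrak{C}_+$ gives $\langle c,q\rangle\ge\varepsilon\|q\|$ for some $\varepsilon>0$, so $\mathbb{J}(P,Q)\ge\varepsilon\|q\|-\log\|q\|-C$. Thus $\mathbb{J}$ has nonempty compact sublevel sets on the feasible set and attains its minimum at some $(\hat P,\hat Q)$; since $\mathbb{J}(P,0)=+\infty$, necessarily $\hat Q\not\equiv 0$.

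\emph{Matching and optimality for (P).} Assume $(\hat P,\hat Q)\in\mathfrak{P}_{+,\circ}\times\mathfrak{P}_+$. Then $\hat P$ and $\hat Q$ are bounded away from $0$, so $\log(\hat P/\hat Q)\in\LInf$ and $\mathbb{J}$ may be differentiated under the integral sign near $(\hat P,\hat Q)$. As $\hat Q$ lies in the interior of $\bar{\mathfrak{P}}_+$, stationarity in $Q$ reads $\int_{\mathbb{T}^d}(\hat P/\hat Q)e^{i(\kb,\thetab)}\,\dm=c_\kb$ for all $\kb\in\Lambda$, i.e.\ \eqref{eq:Cov} with $\psp=(\hat P/\hat Q)\,\dm$. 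In the $P$-variable the sole constraint is the affine equality $p_0=1$, so at the relative-interior point $\hat P$ the directional derivative of $\mathbb{J}(\cdot,\hat Q)$ must vanish along every $h$ with $h_0=0$; since the term $\int_{\mathbb{T}^d}h\,\dm$ in that derivative then vanishes (this is exactly why $\gamma_0$ may be set to $1$), we obtain $\int_{\mathbb{T}^d}e^{i(\kb,\thetab)}\log(\hat P/\hat Q)\,\dm=\gamma_\kb$ for $\kb\in\Lambda\setminus\{0\}$, i.e.\ \eqref{eq:cepstrum} with $\Phi=\hat\Phi:=\hat P/\hat Q$, so $\hat\Phi$ is feasible for (P). For any $\Phi$ feasible for (P) and any feasible $(P,Q)$ of (D), substituting $c_\kb=\int_{\mathbb{T}^d}\Phi e^{i(\kb,\thetab)}\,\dm$ and $\gamma_\kb=\int_{\mathbb{T}^d}(\log\Phi)e^{i(\kb,\thetab)}\,\dm$ and using $p_0=1$ gives, after collecting terms, $\mathbb{J}(P,Q)-\mathbb{I}(\Phi)=\int_{\mathbb{T}^d}P(u-\log u-1)\,\dm$ with $u=Q\Phi/P\ge 0$, which is $\ge 0$ with equality iff $\Phi=P/Q$ a.e.; applying this to $(\hat P,\hat Q)$, where equality holds since $\hat\Phi=\hat P/\hat Q$ is feasible, yields $\mathbb{I}(\hat\Phi)=\mathbb{J}(\hat P,\hat Q)\ge\mathbb{I}(\Phi)$ for all feasible $\Phi$, so $\hat\Phi$ solves (P).

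\emph{The case $d\le 2$, and the main obstacle.} If $d\le 2$ and $\hat P\in\mathfrak{P}_{+,\circ}\subset\mathfrak{P}_+$, then $\hat Q$ minimizes $Q\mapsto\mathbb{J}(\hat P,Q)$ over $\bar{\mathfrak{P}}_+$, and since this map differs from $\mathbb{J}_{\hat P}(Q)$ only by a term independent of $Q$, $\hat Q$ is the unique minimizer of $\mathbb{J}_{\hat P}$ over $\bar{\mathfrak{P}}_+$ by Theorem~\ref{theo:conjecture}; Corollary~\ref{theo:langMcclellan} applied to $c$ and $P=\hat P$ identifies that minimizer as an element of $\mathfrak{P}_+$, so $\hat Q\in\mathfrak{P}_+$. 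The substantive step of the whole argument is existence — securing joint lower semicontinuity of $\mathbb{J}$ on the closure of the feasible cone together with the coercivity estimate in $Q$; the first-order computations in the interior case and the weak-duality identity are routine, and the $d\le 2$ refinement is immediate from Corollary~\ref{theo:langMcclellan} and Theorem~\ref{theo:conjecture}.
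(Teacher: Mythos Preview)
Your proposal is correct and follows essentially the same route as the paper: lower semicontinuity plus compact sublevel sets for existence, stationarity at an interior optimum for the two matching conditions, a weak-duality identity for primal optimality, and reduction to the fixed-$P$ problem for the $d\le 2$ statement. Your technical choices differ in pleasant ways---the decomposition $P\log(P/Q)=(P\log(P/Q)-P+Q)+(P-Q)$ gives a cleaner Fatou argument for joint lower semicontinuity than the paper's (which refers back to the fixed-$P$ Lemma~\ref{lem:Jcont}), and your Jensen bound $\int P\log(P/Q)\,\dm\ge -\log q_0$ is a tidy replacement for the paper's Lemma~\ref{lemma:linearAndLogarithmicGrowth}; conversely, the paper makes the Lagrangian derivation of (D) from (P) explicit and records convexity as a separate lemma (your plan asserts convexity without proof, though it is standard via the perspective of $-\log$ and not needed for existence).
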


For reasons to become clear in Section~\ref{sec:cepstMatch}, the optimization problems (P) and (D) will be referred to as the primal and dual problem, respectively. 
A drawback with Theorem \ref{theo:cepstral} is that even when $d\leq 2$, a solution to the dual problem can be guaranteed to have a rational spectrum that satisfies \eqref{eq:Cov} and \eqref{eq:cepstrum} only if $\hat{P} \in  \mathfrak{P}_{+, \circ}$. In fact, as we shall see in Section~\ref{sec:cepstMatch}, for a solution with  $\hat{P} \in \partial \mathfrak{P}_{+, \circ}$ we might have $\hat Q\in\partial\mathfrak{P}_+$ and hence covariance mismatch. A remedy in the case $d\leq 2$ is to use the Enqvist regularization,  introduced in the one-dimensional setting in \cite{enqvist2004aconvex}. This makes the optimization problem strictly convex and forces the solution $\hat{P}$ into the set $\mathfrak{P}_{+, \circ}$. In this way we obtain strict covariance matching and approximative cepstral matching. This statement  will be made precise in Theorem \ref{theo:cepstralReg} in Section \ref{subsec:reg}.

\subsection{The circulant covariance extension problem}\label{subsec:periodProb}

In the recent paper \cite{lindquist2013thecirculant}, Lindquist and Picci studied, for the case  $d=1$, the situation when the underlying stochastic process $y(t)$ is periodic. For the $N$-periodic case, the covariance sequence must satisfy the extra condition $c_{N-k}=\bar{c}_k$; i.e.,  the $N\times N$ Toeplitz matrix of one period is Hermitan {\em circulant}.
In this case, the spectral measure must be discrete with point masses at $\zeta_\ell = e^{i\ell\tfrac{2\pi}{N}}$, $\ell=0, 1,\dots,N-1$, on the discrete unit circle, and instead of the moment condition \eqref{eq:Cov}  we have  
\begin{equation}\label{discrete_moments}
c_k = \frac{1}{N} \sum_{\ell = 0}^{N-1}\Phi(\zeta_\ell) \zeta_\ell^{k},
\end{equation}
which is the inverse discrete Fourier transform of the sequence $( \Phi(\zeta_\ell))$.

This was generalized to the multidimensional case in \cite{ringh2015themultidimensional}, where a circulant version of Theorem \ref{theo:conjecture} and Corollary \ref{theo:langMcclellan} was derived.  For $\Nb := (N_1, \ldots, N_d)$, consider the discretization of the $d$-dimensional torus
\begin{displaymath}
\zetab_\lb := (e^{i\ell_1\tfrac{2\pi}{N_1}}, \ldots, e^{i\ell_d\tfrac{2\pi}{N_d}})
\end{displaymath}
where
\begin{displaymath}
\mathbb{Z}^d_{\Nb} := \{ \lb = (\ell_1, \ldots, \ell_d) \; | \; 0 \leq \ell_j \leq N_j-1, j= 1, \ldots, d \},
\end{displaymath}
and define  $\zetab_\lb^\kb=\prod_{j=1}^d\zeta_{\ell_j}^{k_j}$.
Next, let $\fP_+(\Nb)$ be the positive cone of all trigonometric polynomials \eqref{Ptrigpol} 
such that $P(\zetab_\lb) > 0$ for all  $\lb \in \mathbb{Z}^d_\Nb$.  Moreover, define the interior $\mathfrak{C}_+(\Nb)$ of the dual cone as the set of all $\{c_\kb\}_{\kb \in\Lambda}$ such that $\langle c, p \rangle > 0$ for all $P\in\bar{\mathfrak{P}}_+(\Nb) \setminus \{0\}$. Clearly $\fP_+(\Nb)\supset\fP_+$, and hence $\mathfrak{C}_+(\Nb)\subset\mathfrak{C}_+$. Then Theorem 2 and Corollary~3 in \cite{ringh2015themultidimensional} can be combined in the following theorem. 

\begin{theorem}[\cite{ringh2015themultidimensional}]\label{theo:conjectureDisc}
Suppose that $2n_j < N_j$, for $j =1, \ldots, d$, and let  $c \in \mathfrak{C}_+(\Nb)$ and $P \in \bar{\mathfrak{P}}_+(\Nb) \setminus \{0\}$.  Then, there exist a $\hat{Q} \in \bar{\mathfrak{P}}_+(\Nb) \setminus \{0\}$ such that $\hat{Q}$ is a solution to the convex problem to minimize\footnote{Note that limits such as $P\log(Q)$ and $P/Q$ may not be well defined in the multidimensional case, and therefore  we define the expressions $P\log(Q)$ and $P/Q$ to be zero whenever $P=0$. This is not needed in the continuous case as the set where $P$ is zero is of measure zero.}
\begin{equation*}
 \mathbb{J}_P^{\Nb}(Q)=\langle c, q \rangle - \frac{1}{\prod_{j=1}^d N_j}\sum_{\lb \in \mathbb{Z}^d_\Nb} P(\zetab_\lb) \log  Q(\zetab_\lb) 
\end{equation*}
over all $Q \in \bar{\mathfrak{P}}_+(\Nb)$. Moreover, there exists a nonnegative function $\hat{\mu}$ with support $\supp(\hat{\mu}) = \{ \zetab_\lb \, | \, \hat{Q}(\zetab_\lb) = 0,\, \lb\in \mZ^d_\Nb \}$ such that
\begin{equation}\label{discretecov}
c_{\kb} =  \frac{1}{\prod_{j=1}^d N_j} \sum_{\lb \in \mathbb{Z}_\Nb^d} \zetab_\lb^\kb \left( \frac{P(\zetab_\lb)}{\hat{Q}(\zetab_\lb)}  + \hat{\mu}(\zetab_\lb) \right),
\end{equation}
and the number of mass points for $\hat{\mu}$ can be chosen so that at most $|\Lambda|-1$ points $\hat{\mu}(\zetab_\lb)$ are nonzero. Finally, if $P \in \mathfrak{P}_+(\Nb)$ then $\hat{Q}\in\mathfrak{P}_+(\Nb)$, which is then also unique, and hence $\Phi = P/\hat{Q}$ satisfies \eqref{discretecov} with $\hat\mu\equiv 0$.
\end{theorem}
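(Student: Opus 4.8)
The plan is to treat this as a finite-dimensional convex program over the closed cone $\bar{\mathfrak{P}}_+(\Nb)$, viewed as a subset of the $|\Lambda|$-dimensional space of $\Lambda$-supported coefficient vectors $q$, and to read off $\hat\mu$ and \eqref{discretecov} from its first-order optimality conditions. First I would record that, with the convention $P\log Q=0$ and $P/Q=0$ at indices where $P=0$, the functional $\mathbb{J}_P^{\Nb}$ is convex and lower semicontinuous on $\bar{\mathfrak{P}}_+(\Nb)$: the map $q\mapsto\langle c,q\rangle$ is linear, and each $q\mapsto -P(\zetab_\lb)\log Q(\zetab_\lb)$ is convex, being a nonnegative multiple of $-\log$ of the affine map $q\mapsto Q(\zetab_\lb)$. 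For existence I would prove coercivity: writing $Q=r\tilde Q$ with $r=\|q\|$ and $\|\tilde q\|=1$, one has $\langle c,q\rangle=r\langle c,\tilde q\rangle\ge \delta r$, where $\delta:=\min\{\langle c,\tilde q\rangle:\tilde q\in\bar{\mathfrak{P}}_+(\Nb),\,\|\tilde q\|=1\}>0$ (the minimum is attained on a compact set and is positive since $c\in\mathfrak{C}_+(\Nb)$ and the pairing is real for Hermitian $c,\tilde q$). Using the grid orthogonality guaranteed by $2n_j<N_j$, $\tfrac{1}{\prod_j N_j}\sum_{\lb}P(\zetab_\lb)=p_0>0$, so the logarithmic term equals $-p_0\log r+O(1)$ along the ray; hence $\mathbb{J}_P^{\Nb}(Q)\ge \delta r-p_0\log r-C\to\infty$. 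Coercivity together with convexity and lower semicontinuity on the closed cone yields a minimizer $\hat Q$, and $\hat Q\neq0$ because $\mathbb{J}_P^{\Nb}(0)=+\infty$.

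Next I would produce $\hat\mu$ from the optimality conditions. At $\hat Q$ the value is finite, so no index $\lb$ can simultaneously satisfy $P(\zetab_\lb)>0$ and $\hat Q(\zetab_\lb)=0$; thus $P/\hat Q$ is well-defined on the grid. The constraint cone is cut out by the affine inequalities $Q(\zetab_\lb)\ge 0$, which are polyhedral, so the KKT conditions hold at $\hat Q$ with no further constraint qualification: there exist multipliers $\hat\mu(\zetab_\lb)\ge 0$ with complementary slackness $\hat\mu(\zetab_\lb)\hat Q(\zetab_\lb)=0$ (so $\operatorname{supp}\hat\mu\subseteq\{\hat Q=0\}$) making the gradient of the Lagrangian vanish. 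Since $\partial Q(\zetab_\lb)/\partial q_\kb=\zetab_\lb^{-\kb}$, and since every active constraint occurs at an index with $P=0$ where the term is identically $0$ and smooth, differentiating is legitimate and the stationarity equations are precisely \eqref{discretecov} after invoking $c_{-\kb}=\bar c_\kb$ and $-\Lambda=\Lambda$.

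For the case $P\in\mathfrak{P}_+(\Nb)$, i.e.\ $P>0$ on the whole grid, I would argue that $\hat Q(\zetab_{\lb_0})=0$ for some $\lb_0$ would force $-P(\zetab_{\lb_0})\log\hat Q(\zetab_{\lb_0})=+\infty$, contradicting finiteness of the minimum; hence $\hat Q\in\mathfrak{P}_+(\Nb)$ and $\hat\mu\equiv0$. Uniqueness follows from strict convexity: the second variation in a direction $\delta q$ equals $\tfrac{1}{\prod_j N_j}\sum_{\lb}P(\zetab_\lb)|\delta Q(\zetab_\lb)|^2/\hat Q(\zetab_\lb)^2$, which vanishes only if $\delta Q\equiv0$ on the grid; because $2n_j<N_j$ makes the evaluation map $q\mapsto(Q(\zetab_\lb))_\lb$ injective on $\Lambda$-supported $q$ (its columns are distinct, mutually orthogonal DFT characters), this forces $\delta q=0$. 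Finally, the reduction to at most $|\Lambda|-1$ mass points is a Carathéodory argument: setting $v_\lb:=(\zetab_\lb^\kb)_{\kb\in\Lambda}$, every $\lb$ with $\hat Q(\zetab_\lb)=0$ gives $\langle v_\lb,\hat q\rangle=\overline{\hat Q(\zetab_\lb)}=0$, and the singular moment vector $\hat c=\tfrac{1}{\prod_j N_j}\sum_\lb\hat\mu(\zetab_\lb)v_\lb$ lies in the same real hyperplane $H=\{x:\langle x,\hat q\rangle=0\}$ (real since $\hat q$ is Hermitian-symmetric), which has dimension $|\Lambda|-1$; the conical form of Carathéodory's theorem applied inside $H$ then rewrites $\hat c$ as a nonnegative combination of at most $|\Lambda|-1$ of the $v_\lb$.

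I expect the main obstacle to be the KKT/stationarity step that constructs $\hat\mu$, since the objective is $+\infty$ on part of $\partial\bar{\mathfrak{P}}_+(\Nb)$ and non-smooth there. The care required is twofold: one must first use finiteness of the minimum to exclude active indices with $P>0$, so that the $0\log0$ and $0/0$ conventions never interfere and $\mathbb{J}_P^{\Nb}$ is genuinely differentiable at $\hat Q$, and then invoke the polyhedral constraint qualification to guarantee the nonnegative multipliers $\hat\mu(\zetab_\lb)$. The remaining steps are comparatively routine once the finite-dimensional convex-analytic framework and the orthogonality afforded by $2n_j<N_j$ are in place.
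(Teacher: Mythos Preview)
The paper does not prove this theorem; it is quoted from \cite{ringh2015themultidimensional} (``Theorem~2 and Corollary~3 in \cite{ringh2015themultidimensional} can be combined in the following theorem'') and Section~\ref{sec:periodic} only remarks that the argument parallels the continuous case. Your proof is correct and is precisely the discrete transcription of the paper's proof of Theorem~\ref{theo:conjecture}: coercivity via the linear-vs-logarithmic estimate (the analog of Lemma~\ref{lemma:linearAndLogarithmicGrowth}), existence from compact sublevel sets, the KKT construction of $\hat\mu$ as in Section~3.1.5, and strict convexity for $P\in\mathfrak{P}_+(\Nb)$. The genuinely discrete ingredients you add are the right ones: polyhedrality of the constraints $Q(\zetab_\lb)\ge 0$ (so no constraint qualification is needed), and the injectivity of $q\mapsto (Q(\zetab_\lb))_\lb$ from $2n_j<N_j$, which replaces the ``$P>0$ a.e.''\ argument used for strict convexity in the continuous case.

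Two small remarks. First, what KKT and complementary slackness give you is $\operatorname{supp}(\hat\mu)\subseteq\{\zetab_\lb:\hat Q(\zetab_\lb)=0\}$, not equality; the continuous statement (Theorem~\ref{theo:conjecture}) also has $\subseteq$, so the ``$=$'' in the discrete statement should be read as an inclusion, and your argument is the intended one. Second, your Carath\'eodory step is fine, but note you are implicitly working in the real $|\Lambda|$-dimensional space of Hermitian-symmetric sequences (as you say), where the $v_\lb$ indeed lie because $\zetab_\lb^{-\kb}=\overline{\zetab_\lb^{\kb}}$; it is worth stating this explicitly so the dimension count is airtight. With those clarifications the proof is complete and matches the approach the paper takes for the continuous analog.
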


In \cite{lindquist2013thecirculant} it was shown  in the one-dimensional case that as $N \rightarrow \infty$ the solution of the discrete problem, call it $\hat{Q}_N$, converges to the solution to the corresponding continuous problem, call it $\hat{Q}$. This  gives a natural way to compute an approximate solution to the continuous problem using the fast computations of the discrete Fourier transform.  The same holds also in higher dimensions, as seen in the following result.  

\begin{theorem}\label{theo:convergence}
Suppose that $P \in \bar{\mathfrak{P}}_+ \setminus \{0\}$ and $c \in \mathfrak{C}_+$, and let  $\hat{Q}$ and $\hat{Q}_\Nb$ be the optimal solutions of Theorem \ref{theo:conjecture} and Theorem \ref{theo:conjectureDisc}, respectively.  Then  
\begin{equation*}
\lim_{\min(\Nb) \rightarrow \infty} \hat{Q}_\Nb = \hat{Q}
\end{equation*}
uniformly.

\end{theorem}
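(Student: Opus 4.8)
The plan is to adapt the $d=1$ argument of \cite{lindquist2013thecirculant}: regard $\mathbb{J}_P^{\Nb}$ as a Riemann-sum discretization of $\mathbb{J}_P$ and show that the minimizers converge. All competing polynomials $Q$ lie in the fixed finite-dimensional space of trigonometric polynomials supported on $\Lambda$, on which coefficient convergence and uniform convergence on $\mT^d$ coincide, so it suffices to show that every convergent subsequence of $\{\hat Q_\Nb\}$ (for any selection of minimizers) tends to $\hat Q$.

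First I would establish, by a compactness argument, that $c\in\mathfrak{C}_+$ yields a \emph{uniform} coercivity bound: there is $\epsilon>0$ with $\langle c,q\rangle\ge\epsilon\|q\|$ for all $Q\in\bar{\mathfrak{P}}_+(\Nb)$ once $\min(\Nb)$ is large (in particular $c\in\mathfrak{C}_+(\Nb)$, so $\hat Q_\Nb$ is well defined by Theorem~\ref{theo:conjectureDisc}). Indeed, otherwise there would be $Q^{(m)}\in\bar{\mathfrak{P}}_+(\Nb^{(m)})$ with $\|q^{(m)}\|=1$, $\langle c,q^{(m)}\rangle\le 1/m$ and $\min(\Nb^{(m)})\to\infty$; a subsequential limit $Q^*$ would satisfy $\|q^*\|=1$, $\langle c,q^*\rangle\le0$, and, since every point $e^{i\thetab}$ is a limit of $\Nb^{(m)}$-grid points at which $Q^{(m)}\ge0$, also $Q^*\in\bar{\mathfrak{P}}_+\setminus\{0\}$, contradicting $c\in\mathfrak{C}_+$.

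Next I would bound $\{\hat Q_\Nb\}$ uniformly. Since $Q\equiv1$ is feasible, $\mathbb{J}_P^{\Nb}(\hat Q_\Nb)\le\mathbb{J}_P^{\Nb}(1)=c_0$; writing $Q=\rho\tilde Q$ with $\|\tilde q\|=1$ and using the exact quadrature identity $\frac{1}{\prod_{j=1}^d N_j}\sum_\lb P(\zetab_\lb)=\int_{\mT^d}P\,\dm>0$ (valid since $2n_j<N_j$) together with a crude pointwise bound on $\tilde Q$, one obtains $\mathbb{J}_P^{\Nb}(\rho\tilde Q)\ge\epsilon\rho-a(1+\log\rho)$ for a constant $a$ independent of $\rho,\tilde Q,\Nb$, which $\to\infty$ as $\rho\to\infty$. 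Hence $\|\hat q_\Nb\|$ is bounded, and passing to a subsequence $\hat Q_{\Nb^{(m)}}\to Q^*$; evaluating along grid points converging to an arbitrary $e^{i\thetab}$ shows $Q^*\in\bar{\mathfrak{P}}_+$. It remains to see that $Q^*$ minimizes $\mathbb{J}_P$ over $\bar{\mathfrak{P}}_+$ — then $Q^*=\hat Q$ by uniqueness in Theorem~\ref{theo:conjecture}, and since this holds along every convergent subsequence, $\hat Q_\Nb\to\hat Q$ uniformly. For the upper bound: for fixed $Q\in\mathfrak{P}_+$ the function $P\log Q$ is continuous, so $\mathbb{J}_P^{\Nb}(Q)\to\mathbb{J}_P(Q)$ by convergence of Riemann sums over the refining grids, and minimality of $\hat Q_\Nb$ gives $\limsup_m\mathbb{J}_P^{\Nb^{(m)}}(\hat Q_{\Nb^{(m)}})\le\mathbb{J}_P(Q)$; a routine perturbation argument (convexity, dominated convergence, log-integrability of nonzero elements of $\bar{\mathfrak{P}}_+$) extends this to all $Q\in\bar{\mathfrak{P}}_+$.

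The crux, and the step I expect to be the main obstacle, is the matching lower bound $\mathbb{J}_P(Q^*)\le\liminf_m\mathbb{J}_P^{\Nb^{(m)}}(\hat Q_{\Nb^{(m)}})$: the discretized integrand $P\log\hat Q_{\Nb^{(m)}}$ varies with $m$ and $\log\hat Q_{\Nb^{(m)}}$ can be large and negative near the zeros of the limit $Q^*$, so one cannot pass to the limit in the Riemann sums directly. My plan is to use that uniform convergence gives $\hat Q_{\Nb^{(m)}}(\zetab_\lb)\le Q^*(\zetab_\lb)+\delta$ at every grid point for $m$ large (and $\hat Q_{\Nb^{(m)}}(\zetab_\lb)>0$ wherever $P(\zetab_\lb)>0$, since otherwise the discrete functional is $+\infty$); then monotonicity of $\log$ and $P\ge0$ bound the entropy sums above by the Riemann sums of the \emph{continuous} function $P\log(Q^*+\delta)$, which converge to $\int_{\mT^d}P\log(Q^*+\delta)\,\dm$, and letting $\delta\downarrow0$ with monotone convergence drives this down to $\int_{\mT^d}P\log Q^*\,\dm$. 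Combined with $\langle c,\hat q_{\Nb^{(m)}}\rangle\to\langle c,q^*\rangle$, this gives $\mathbb{J}_P(Q^*)\le\liminf_m\mathbb{J}_P^{\Nb^{(m)}}(\hat Q_{\Nb^{(m)}})\le\mathbb{J}_P(Q)$ for all $Q\in\bar{\mathfrak{P}}_+$, completing the argument.
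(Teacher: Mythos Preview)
Your argument is correct and shares the paper's preliminary scaffolding: a uniform coercivity bound (yielding $c\in\mathfrak C_+(\Nb)$ for large $\min(\Nb)$, the content of Lemma~\ref{lm:limN}), uniform boundedness of $(\hat Q_\Nb)$ (Lemma~\ref{lem:seqBounded}), and extraction of a subsequential limit $Q^*\in\bar{\mathfrak P}_+$. The divergence is in the identification $Q^*=\hat Q$. You proceed in a $\Gamma$-convergence style: an upper bound $\limsup\mathbb J_P^\Nb(\hat Q_\Nb)\le\mathbb J_P(Q)$ for $Q\in\mathfrak P_+$ via Riemann sums, and a lower bound $\mathbb J_P(Q^*)\le\liminf\mathbb J_P^\Nb(\hat Q_\Nb)$ obtained by dominating $\hat Q_\Nb\le Q^*+\delta$ on the grid, passing to the Riemann limit of the continuous $P\log(Q^*+\delta)$, and letting $\delta\downarrow0$ by monotone convergence; uniqueness in Theorem~\ref{theo:conjecture} then forces $Q^*=\hat Q$. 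The paper instead perturbs \emph{both} $\hat Q_\Nb$ and $\hat Q$ by $\eta\tilde Q$ with a fixed $\tilde Q\in\mathfrak P_+$, so every integrand stays continuous; combining optimality with the elementary bound $\mathbb J_P(Q+\eta\tilde Q)\le\mathbb J_P(Q)+\eta\langle c,\tilde q\rangle$ (and its discrete analogue) gives $\mathbb J_P(Q^*+\eta\tilde Q)\le\mathbb J_P(\hat Q)+2\eta\langle c,\tilde q\rangle$, and the conclusion follows because the sublevel sets of the strictly convex $\mathbb J_P$ shrink to $\{\hat Q\}$. Your route is more direct and avoids the sublevel-set argument, at the cost of handling the logarithmic singularity explicitly; the paper's route sidesteps that singularity by keeping all comparisons inside $\mathfrak P_+$, but then needs strict convexity to close.
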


\section{The Multidimensional rational covariance extension problem}\label{sec:multidimRCEP}

Most of this section will be devoted to proving Theorem \ref{theo:conjecture}. Some technical details are deferred to
the appendix. Possible interpretations of $P$ will be discussed in the end of the section together with an example showing the non-uniqueness of the measure $d\hat{\mu}$.

\subsection{Proof of Theorem \ref{theo:conjecture}}

\subsubsection{Deriving the dual problem}\label{sec.dualderivation}
For a given $P \in \bar{\mathfrak{P}}_+ \setminus \{ 0 \}$ and $c \in \mathfrak{C}_+$, consider the primal problem to maximize \eqref{primalfunctional} subject to the moment constraints \eqref{eq:Cov} over the set of nonnegative bounded measures, i.e., over $\psp=\Phi dm+d\hat\mu$, where  $\Phi$ is a nonnegative $\LOne$ function and $\derivd \hat{\mu}$ is a nonnegative singular measure. The Lagrangian of this problem becomes
\begin{eqnarray*}
\mathcal{L}_P (\Phi, \derivd \hat{\mu}, Q) &= & \int_{\mathbb{T}^d} P \log \Phi  \dm  
+ \sum_{\kb \in \Lambda} \bar{q}_\kb \left(c_\kb - \int_{\mathbb{T}^d} e^{i (\kb, \thetab)} (\Phi \dm + \derivd \hat{\mu}) \right)
\end{eqnarray*}
where $\bar{q}_{\kb}$, $\kb \in \Lambda$, are Lagrange multipliers. Identifying $\sum_{\kb \in  \Lambda} \bar{q}_{\kb} e^{i (\kb, \thetab)}$ with the trigonometric polynomial $Q$, this can be simplified to
\begin{equation*}
\mathcal{L}_P (\Phi, \derivd \hat{\mu}, Q) =\int_{\mathbb{T}^d} \! \! P \log\Phi \, \dm + \langle c, q \rangle - \int_{\mathbb{T}^d} \! \! Q \Phi \dm - \int_{\mathbb{T}^d} \! \! Q \derivd \hat{\mu}.
\end{equation*}
The dual function $\sup_{\psp\ge 0} \mathcal{L}_P (\Phi, \derivd \hat{\mu}, Q)$ is finite only if $Q \in \bar{\mathfrak{P}}_+ \setminus \{0\}$. To see this, let $Q \not \in \bar{\mathfrak{P}}_+$, i.e., suppose there is $\thetab_0 \in \mathbb{T}^d$ for which $Q(\thetab_0) < 0$. 
Then, by letting  $\hat\mu(\thetab_0)\to \infty$ in the singular part $d\hat\mu$, 
we get that $\mathcal{L}_P (\Phi, \derivd \hat{\mu}, Q) \to \infty$. 
Moreover, if $Q \equiv 0$ then since $P$ is continuous and $P \not \equiv 0$ there is a small neighbourhood where $P>0$. Letting $\Phi \to \infty$ in this neighbourhood we again have that $\mathcal{L}_P (\Phi, \derivd \hat{\mu}, Q) \to \infty$. Hence we can restrict the multipliers to $\bar{\mathfrak{P}}_+ \setminus \{0\}$.


Now note that any pair $(\Phi, d\hat\mu)$ maximizing $\mathcal{L}_P (\Phi, \derivd \hat{\mu}, Q)$ must satisfy $\int_{\mathbb{T}^d} Q \derivd \hat{\mu}=0$, or 
equivalently, the support of $d\hat\mu$ is 
contained in $\{\thetab\in \mT^d \,|\, Q(e^{i\thetab})=0\}$. 
Otherwise letting $d\hat\mu=0$ would result in a larger value of the Lagrangian.

Note that the value of the Lagrangian becomes $-\infty$ for any $\Phi$ that vanishes on a set of positive measure, and hence such a $\Phi$ cannot be optimal.
Now, for any direction $\delta \Phi$ such that $\Phi + \epsilon \delta \Phi$ is a nonnegative $\LOne$ function for sufficiently small $\epsilon >0$, consider the directional derivative 
\begin{align*}
 \delta \mathcal{L}_P(\Phi, d\hat\mu, Q ; \delta \Phi) &=\! \lim_{\varepsilon \rightarrow 0} \frac{1}{\varepsilon}\left(\mathcal{L}_P(\Phi\! +\!  \delta \Phi,\! d\hat\mu, \!Q )\! -\! \mathcal{L}_P(\Phi,\! d\hat\mu, \!Q)\right) =
\int_{\mathbb{T}^d} \left( \frac{P}{\Phi} - Q \right) \delta \Phi \dm .
\end{align*}
For a stationary point this must be nonpositive for all feasible directions $\delta \Phi$, and in particular this holds for $\delta \Phi=\Phi\, {\rm sign}(P-Q\Phi)$ which by construction is a feasible direction. For this direction, the constraint becomes $\int_{\mT^d}|P-Q\Phi|dm\le 0$, requiring that $\Phi=P/Q$ a.e.,
which inserted into the dual function yields
\begin{equation}
\label{originaldual}
\sup_{d\mu\ge 0} \mathcal{L}_P(\Phi, d\hat\mu, Q) = \mathbb{J}_P(Q) +  \int_{\mathbb{T}^d} P (\log P - 1) \dm,
\end{equation}
where he last term in  \eqref{originaldual} does not depend on $Q$ and 
\begin{equation}\label{dualfunctional2}
\mathbb{J}_P(Q) = \langle c, q \rangle - \int_{\mathbb{T}^d} P \log  Q \,\dm .
\end{equation}
Hence  the dual problem is equivalent to minimizing $\mathbb{J}_P$ over $\bar{\mathfrak{P}}_+ \setminus \{0\}$.

\subsubsection{Lower semicontinuity of the dual functional}\label{sec:simicont}
For any $Q \in \mathfrak{P}_+$, 
$\mathbb{J}_P(Q)$ is clearly continuous. However, for $Q \in \partial \mathfrak{P}_+$, $\log Q$ will approach $-\infty$ in the points where $Q(e^{i\thetab}) = 0$, and hence we need to consider the behavior of the integral term in \eqref{dualfunctional2}. Since $P$ is a fixed nonnegative trigonometric polynomial, it suffices to consider the integral
$\int_{\mathbb{T}^d} \log Q \, \dm$. 
However, this integral is known as the (logarithmic) Mahler measure of the Laurent polynomial $Q$ \cite{mahler1962onsome}, and it is finite for all  $Q \in \bar{\mathfrak{P}}_+ \setminus \{0\}$ \cite[Lemma 2, p. 223]{schinzel2000polynomials}. This leads to the following lemma, the proof of which 
is deferred to the appendix. 

\begin{lemma}\label{lem:Jcont}
For any $P \in \bar{\mathfrak{P}}_+ \setminus \{0\}$ and $c \in \mathfrak{C}_+$, the functional $\mathbb{J}_P: \bar{\mathfrak{P}}_+ \setminus \{0\} \rightarrow \mathbb{R}$ is lower semicontinuous.
\end{lemma}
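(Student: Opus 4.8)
The plan is to establish lower semicontinuity of $\mathbb{J}_P$ on $\bar{\mathfrak{P}}_+ \setminus \{0\}$ by splitting it into the linear part $\langle c,q\rangle$, which is trivially continuous, and the integral part $-\int_{\mathbb{T}^d} P\log Q\,\dm$, which is the genuinely delicate term. Fixing a sequence $Q_n \to Q$ in $\bar{\mathfrak{P}}_+\setminus\{0\}$ (convergence of the finitely many coefficients, hence uniform convergence as trigonometric polynomials of fixed degree), I would show
\[
\liminf_{n\to\infty} \left(-\int_{\mathbb{T}^d} P\log Q_n\,\dm\right) \;\ge\; -\int_{\mathbb{T}^d} P\log Q\,\dm,
\]
equivalently $\limsup_n \int_{\mathbb{T}^d} P\log Q_n\,\dm \le \int_{\mathbb{T}^d} P\log Q\,\dm$. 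The target limit is finite: $P$ is a fixed bounded nonnegative trigonometric polynomial and $\int_{\mathbb{T}^d}\log Q\,\dm$ is the (logarithmic) Mahler measure of $Q$, which is finite for every $Q\in\bar{\mathfrak{P}}_+\setminus\{0\}$ by the cited result of Schinzel, so $P\log Q \in L^1(\mathbb{T}^d)$.

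The core of the argument is a uniform upper bound on the positive parts $(\log Q_n)_+$ that enables Fatou's lemma (applied to the functions $C - P\log Q_n$ for a suitable constant, or directly to the nonnegative functions $P\big((\log M) - \log Q_n\big)$ where $M = \sup_n \|Q_n\|_\infty < \infty$). Since the coefficients converge, the $Q_n$ are uniformly bounded above by some constant $M$, so $P\big(\log M - \log Q_n\big) \ge 0$ pointwise, and by Fatou,
\[
\liminf_{n\to\infty}\int_{\mathbb{T}^d} P(\log M - \log Q_n)\,\dm \;\ge\; \int_{\mathbb{T}^d} P(\log M - \log Q)\,\dm,
\]
which, after cancelling the finite constant $\int_{\mathbb{T}^d} P\log M\,\dm = (\log M)\,p_0$, rearranges to exactly the desired inequality. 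The only subtlety is making sure $\log Q_n \to \log Q$ pointwise a.e., which holds because $Q_n \to Q$ uniformly and $Q > 0$ a.e. (its zero set has Lebesgue measure zero, again by the Mahler measure finiteness, since $\int \log Q > -\infty$ forces $Q > 0$ a.e.); on the full-measure set where $Q(e^{i\thetab}) > 0$, continuity of $\log$ gives the pointwise convergence.

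I expect the main obstacle to be the behavior near the boundary $\partial\mathfrak{P}_+$, i.e., controlling $\int_{\mathbb{T}^d}\log Q_n\,\dm$ when the limit $Q$ has zeros and the $Q_n$ may or may not: one must be careful that no mass escapes to $+\infty$ (ruled out by the uniform upper bound $M$) and that the Fatou argument is applied to honestly nonnegative integrands. A secondary technical point is justifying that convergence of the coefficient vectors in $\mathbb{R}^{|\Lambda|}$ is the right notion of convergence in $\bar{\mathfrak{P}}_+\setminus\{0\}$ and that it yields uniform convergence of the polynomials on $\mathbb{T}^d$; this is routine since we work in a fixed finite-dimensional space of trigonometric polynomials. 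With these in hand, adding back the continuous linear term $\langle c, q_n\rangle \to \langle c, q\rangle$ completes the proof that $\mathbb{J}_P$ is lower semicontinuous.
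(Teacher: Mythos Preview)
Your proposal is correct and follows essentially the same route as the paper: split off the continuous linear term, use the uniform bound $M=\sup_n\|Q_n\|_\infty$ to render $P(\log M-\log Q_n)\ge 0$, and apply Fatou's lemma together with the finiteness of the Mahler measure. The paper's argument is slightly terser (it applies Fatou to $-\log(Q_n/M)$ without carrying the factor $P$ explicitly and simply invokes convergence ``in extended real-valued sense''), but the structure and key ideas are the same.
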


\subsubsection{The uniqueness of a solution}
From the first directional derivative 
\begin{equation*}
\delta \mathbb{J}_P(Q; \delta Q) = \langle c, \delta q \rangle - \int_{\mathbb{T}^d} \frac{P}{Q} \delta Q \dm 
\end{equation*}
of the dual functional \eqref{dualfunctional2}, we readily derive the second 
\begin{equation*}
\delta^2 \mathbb{J}_P(Q; \delta Q) = \int_{\mathbb{T}^d} \frac{P}{Q^2} (\delta Q)^2 \dm ,
\end{equation*}
which is clearly nonnegative for all variations $\delta Q$. Therefore, since, in addition, the constraint set $\bar{\mathfrak{P}}_+$ is convex, the dual problem is a convex optimization problem. To see that $\mathbb{J}_P$ is actually strictly convex, note that since $P$ is positive almost everywhere, so is $P/Q^2$. Therefore, for $\delta^2 \mathbb{J}_P(Q; \delta Q)$ to be zero we must have $\delta Q = 0$ almost everywhere, which implies that it is zero everywhere since it is continuous.  This implies that if there exists a solution, this solution is unique.

\subsubsection{The existence of a solution}\label{sec:exist}
If we can show that  $\mathbb{J}_P$ has compact sublevel sets, then $\mathbb{J}_P$ must have a minimum since it is 
lower semicontinuous (Lemma~\ref{lem:Jcont}).

\begin{lemma}\label{lm:compactSublevel}
The sublevel sets $\mathbb{J}_P^{-1}(-\infty, r]$ are compact for all $r \in \mathbb{R}$.
\end{lemma}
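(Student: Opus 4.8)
The plan is to show that $\mathbb{J}_P$ is coercive on $\bar{\mathfrak{P}}_+\setminus\{0\}$, so that each sublevel set $\mathbb{J}_P^{-1}(-\infty,r]$ is bounded; combined with lower semicontinuity (Lemma~\ref{lem:Jcont}) and a check that the sublevel sets are closed in $\bar{\mathfrak{P}}_+$, this gives compactness. The main work is to control the behavior of $\mathbb{J}_P(Q) = \langle c,q\rangle - \int_{\mathbb{T}^d} P\log Q\,\dm$ as $\|Q\|\to\infty$. First I would normalize: write $Q = \rho R$ with $\rho = \|Q\| > 0$ (any fixed norm on the finite-dimensional coefficient space) and $R$ in the unit sphere $S$ intersected with $\bar{\mathfrak{P}}_+$. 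Then
\begin{equation*}
\mathbb{J}_P(Q) = \rho\,\langle c, r\rangle - \Big(\int_{\mathbb{T}^d} P\,\dm\Big)\log\rho - \int_{\mathbb{T}^d} P\log R\,\dm .
\end{equation*}
Here $\langle c, r\rangle \ge \delta := \min_{R\in S\cap\bar{\mathfrak{P}}_+}\langle c, r\rangle$, and $\delta > 0$ because $c\in\mathfrak{C}_+$ and $S\cap\bar{\mathfrak{P}}_+$ is compact and avoids $0$. The term $-\int P\log R\,\dm$ is bounded below on $S\cap\bar{\mathfrak{P}}_+$: $\log R$ is bounded above there (since $R$ ranges over a compact set of polynomials of bounded degree, uniformly bounded in sup norm), so $-\int P\log R\,\dm \ge -\big(\int P\,\dm\big)\sup_{R,\thetab}\log R(e^{i\thetab}) =: -C$. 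Hence
\begin{equation*}
\mathbb{J}_P(Q) \ge \delta\rho - \Big(\int_{\mathbb{T}^d} P\,\dm\Big)\log\rho - C \longrightarrow \infty \quad\text{as }\rho\to\infty,
\end{equation*}
uniformly in $R$. This proves boundedness of each sublevel set.

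Next I would address closedness. A sublevel set $\{Q\in\bar{\mathfrak{P}}_+\setminus\{0\} : \mathbb{J}_P(Q)\le r\}$ is closed as a subset of $\bar{\mathfrak{P}}_+\setminus\{0\}$ by lower semicontinuity of $\mathbb{J}_P$ (Lemma~\ref{lem:Jcont}), but I must rule out the possibility that a sequence in the sublevel set converges to $Q\equiv 0$ in the ambient space $\bar{\mathfrak{P}}_+$. This is handled by the same estimate: as $Q\to 0$, i.e. $\rho\to 0$, the bound $\mathbb{J}_P(Q)\ge \delta\rho - \big(\int P\,\dm\big)\log\rho - C \to +\infty$ (note $\int_{\mathbb{T}^d}P\,\dm > 0$ since $P\in\bar{\mathfrak{P}}_+\setminus\{0\}$ is nonnegative and not identically zero), so points with small norm cannot lie in any fixed sublevel set. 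Therefore each sublevel set stays away from $0$, is a closed and bounded subset of the finite-dimensional space of coefficient vectors, hence compact.

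The main obstacle, and the only nonroutine point, is the lower bound on $-\int_{\mathbb{T}^d}P\log R\,\dm$ uniformly over $R\in S\cap\bar{\mathfrak{P}}_+$ — equivalently, that the Mahler-type integral $\int_{\mathbb{T}^d}\log R\,\dm$ stays bounded (above and below, but we only need above here) as $R$ ranges over this compact set. Boundedness above is elementary since $R$ is uniformly bounded in sup norm on the compact set $S\cap\bar{\mathfrak{P}}_+$. One subtlety worth a remark: $\log R$ can tend to $-\infty$ pointwise when $R\in\partial\mathfrak{P}_+$, but by the finiteness of the Mahler measure for every $Q\in\bar{\mathfrak{P}}_+\setminus\{0\}$ cited before Lemma~\ref{lem:Jcont} together with the lower semicontinuity already established, $\int P\log R\,\dm$ is an upper semicontinuous function of $R$ on the compact set $S\cap\bar{\mathfrak{P}}_+$ and hence attains its supremum there; this makes the constant $C$ legitimate and finite. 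With $\delta > 0$, $\int P\,\dm > 0$, and $C < \infty$ in hand, the coercivity estimate $\mathbb{J}_P(Q)\ge \delta\|Q\| - \big(\int P\,\dm\big)\log\|Q\| - C$ immediately yields compact sublevel sets, completing the proof.
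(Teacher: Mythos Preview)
Your proof is correct and follows essentially the same route as the paper: a normalization $Q=\rho R$ yielding a linear-minus-logarithmic lower bound on $\mathbb{J}_P(Q)$ (packaged in the paper as Lemma~\ref{lemma:linearAndLogarithmicGrowth}), combined with lower semicontinuity for closedness. The paper normalizes by $\|Q\|_\infty$ so that $\log(Q/\|Q\|_\infty)\le 0$ and the constant $C$ vanishes, which makes your final paragraph's appeal to the Mahler measure and upper semicontinuity unnecessary; otherwise the arguments coincide.
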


For the proof of Lemma \ref{lm:compactSublevel} we need the following lemma modifying  Proposition 2.1 in \cite{byrnes2006thegeneralized} to the present setting.

\begin{lemma}\label{lemma:linearAndLogarithmicGrowth}
For a fixed $c \in \mathfrak{C}_+$, there exists an $\varepsilon > 0$ such that for every $(P,Q) \in (\bar{\mathfrak{P}}_+ \setminus \{ 0\}) \times (\bar{\mathfrak{P}}_+ \setminus \{ 0\})$
\begin{equation}\label{Jestimate}
\mathbb{J}_P(Q) \geq \varepsilon \|Q\|_\infty - \int_{\mathbb{T}^d} P \dm \; \log  \|Q\|_\infty .
\end{equation}
\end{lemma}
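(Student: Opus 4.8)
The plan is to separate the two terms of $\mathbb{J}_P(Q) = \langle c, q\rangle - \int_{\mathbb{T}^d} P\log Q\,\dm$ and control each by $\|Q\|_\infty$. For the linear term, the crucial observation is that $c \in \mathfrak{C}_+$ is an interior point of the dual cone of $\bar{\mathfrak{P}}_+$; hence there is a fixed $\varepsilon>0$ such that $\langle c, q\rangle \geq \varepsilon\,\|q\|$ (in some norm on the coefficients) for all $Q \in \bar{\mathfrak{P}}_+$, by a standard separating-hyperplane/compactness argument on the unit sphere of $\bar{\mathfrak{P}}_+$. Then I would relate the coefficient norm $\|q\|$ to $\|Q\|_\infty$: since $Q$ is a trigonometric polynomial with fixed spectral support $\Lambda$ of cardinality $|\Lambda|$, the two norms are equivalent (in particular $\|q\| \geq c_1 \|Q\|_\infty$ for a constant $c_1$ depending only on $\Lambda$, obtained e.g.\ by recovering coefficients via Fourier integration, $|q_\kb| \le \|Q\|_\infty$, but for the lower bound one uses that on the finite-dimensional space all norms are comparable). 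Absorbing constants into $\varepsilon$ gives $\langle c,q\rangle \geq \varepsilon\,\|Q\|_\infty$.

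For the logarithmic term I would bound $-\int_{\mathbb{T}^d} P\log Q\,\dm$ from below. Write $\log Q = \log\!\big(Q/\|Q\|_\infty\big) + \log\|Q\|_\infty$, so that
\begin{equation*}
-\int_{\mathbb{T}^d} P\log Q\,\dm = -\int_{\mathbb{T}^d} P\log\!\Big(\tfrac{Q}{\|Q\|_\infty}\Big)\dm - \Big(\int_{\mathbb{T}^d} P\,\dm\Big)\log\|Q\|_\infty .
\end{equation*}
The normalized polynomial $Q/\|Q\|_\infty$ satisfies $0 \le Q/\|Q\|_\infty \le 1$, so $\log(Q/\|Q\|_\infty) \le 0$ pointwise, and since $P \ge 0$ the first integral on the right is $\ge 0$. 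Combining with the linear estimate yields exactly \eqref{Jestimate}. This is essentially the adaptation of Proposition 2.1 of \cite{byrnes2006thegeneralized}: the only structural facts used are $P \ge 0$, $c \in \mathfrak{C}_+$, and finite-dimensionality of the polynomial space.

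The main obstacle, and the one point deserving care, is the first step: making precise the inequality $\langle c, q\rangle \geq \varepsilon\,\|Q\|_\infty$ uniformly over $Q \in \bar{\mathfrak{P}}_+\setminus\{0\}$. One must check that $c \in \mathfrak{C}_+$ does indeed give a \emph{uniform} lower bound — this follows because $\{Q \in \bar{\mathfrak{P}}_+ : \|Q\|_\infty = 1\}$ is compact (closed and bounded in a finite-dimensional space), $\langle c,\cdot\rangle$ is continuous and strictly positive there by definition of $\mathfrak{C}_+$, so it attains a positive minimum $\varepsilon$; the estimate then extends to all of $\bar{\mathfrak{P}}_+\setminus\{0\}$ by homogeneity. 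One should also confirm that the finiteness of the Mahler-measure term (cited from \cite[Lemma 2, p. 223]{schinzel2000polynomials}) is what makes $\int_{\mathbb{T}^d} P\log(Q/\|Q\|_\infty)\dm$ finite rather than $-\infty$, so that the rearrangement above is legitimate; but for the inequality \eqref{Jestimate} itself we only need the lower bound $\ge 0$, which holds even if the integral were $+\infty$, so no subtlety arises there. The $(P,Q)$ appearing with $P \in \bar{\mathfrak{P}}_+\setminus\{0\}$ arbitrary causes no trouble since $P$ enters only through $\int P\,\dm$ and through the sign constraint $P \ge 0$.
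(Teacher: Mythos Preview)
Your proposal is correct and follows essentially the same route as the paper: the paper too obtains $\langle c,q\rangle \ge \varepsilon\|Q\|_\infty$ by minimizing $\langle c,\cdot\rangle$ over a compact slice of $\bar{\mathfrak{P}}_+$ (they use the coefficient sphere $\|q\|_\infty=1$ and then the explicit bound $\|Q\|_\infty\le |\Lambda|\|q\|_\infty$, whereas you invoke norm equivalence or work directly on $\|Q\|_\infty=1$), and then handles the logarithmic term via the identical decomposition $\log Q=\log(Q/\|Q\|_\infty)+\log\|Q\|_\infty$ together with $P\ge 0$ and $Q/\|Q\|_\infty\le 1$.
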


\begin{proof}
Since $\langle c,q \rangle$ is a continuous function, it achieves a minimum on the compact set $\{Q \in \bar{\mathfrak{P}}_+ \setminus \{ 0\} \mid \|q\|_{\infty} = 1\}$, where $\|q\|_\infty := \max_{\kb \in \Lambda} |q_{\kb}|$. The minimum value $\kappa_c$ must be positive since $c \in \mathfrak{C}_+$ and hence $\langle c,q \rangle> 0$ for any $q \in \bar{\mathfrak{P}}_+\setminus \{ 0\}$. For any $Q \in \bar{\mathfrak{P}}_+ \setminus \{ 0\}$ we thus have
\begin{equation}\label{eq:mcineq}
\langle c,q \rangle =  \langle c, \frac{q}{\mbox{} \; \, \|q\|_\infty}  \rangle \|q\|_\infty \geq \kappa_c \|q\|_\infty.
\end{equation}
By Lemma~\ref{lem:trigPoly}, $\|Q\|_\infty \leq |\Lambda|\|q\|_\infty$ , and hence by choosing 
$\varepsilon \leq \kappa_c/|\Lambda|$ we get
\begin{equation}\label{eq:cqApprox2}
\langle c,q \rangle \geq \kappa_c \|q\|_\infty \geq \frac{\kappa_c}{|\Lambda|} \|Q\|_\infty \geq \varepsilon \|Q\|_\infty.
\end{equation}
To obtain a bound on the second term in \eqref{dualfunctional2}, we observe that
\begin{displaymath}
\int_{\mathbb{T}^d} \! \! P \log Q \dm =\int_{\mathbb{T}^d} \! \!  P \log \left[ \frac{Q}{\|Q\|_\infty} \right] \dm\!  +\! \int_{\mathbb{T}^d}\! \!  P \dm  \log \|Q\|_\infty 
 \leq \int_{\mathbb{T}^d}\! \!  P \dm \, \log \|Q\|_\infty ,
\end{displaymath}
since $Q/\|Q\|_\infty \leq 1$. Hence \eqref{Jestimate} follows.
 \end{proof}
 
  \begin{proofWithName}{Proof of Lemma \ref{lm:compactSublevel}}
 For any  $r \in \mathbb{R}$, large enough for the sublevel set $\{ Q \in \bar{\mathfrak{P}}_+ \setminus \{ 0\} \mid r \geq \mathbb{J}_P(Q) \}$ to be  nonempty, 
\begin{equation*}
r \geq \mathbb{J}_P(Q) \geq \varepsilon \|Q\|_\infty - \int_{\mathbb{T}^d}\!\! P \dm \, \log  \|Q\|_\infty
\end{equation*}
for some $\varepsilon > 0$ 
(Lemma~\ref{lemma:linearAndLogarithmicGrowth}).
Comparing linear and logarithmic growth we see that the sublevel set is bounded both from above and from below. Moreover, since $\mathbb{J}_P$ is lower semicontinuous (Lemma~\ref{lem:Jcont}), the sublevel sets are also closed \cite[p. 37]{rudin1987real}. Therefore they are compact.
\end{proofWithName}

\subsubsection{Existence of a singular measure}\label{subsubsec:singMeasure}
It remains to show that there exists a measure $d\hat{\mu}$ prescribed by the theorem and that $\psp = P/\hat{Q} \dm + \derivd \hat{\mu}$ is in fact an optimal solution to the primal problem to maximize \eqref{primalfunctional} subject to the moment constraints \eqref{eq:Cov}. To this end, we invoke the KKT-conditions \cite[p. 249]{luenberger1969optimization} for the dual optimization problem, which require that the functional
\begin{equation*}
L_P(Q, d\tilde{\mu}) = \langle c, q \rangle - \int_{\mathbb{T}^d} P \log (Q) \dm - \int_{\mathbb{T}^d} Q d\tilde{\mu}.
\end{equation*}
is stationary at $\hat{Q}$ for some nonnegative measure%
\footnote{Note that by Rietz's representation theorem (for periodic functions), the dual of $C(\mathbb{T}^d)$ is the space of bounded measures on $\mathbb{T}^d$ \cite[p. 133]{luenberger1969optimization}.} 
$d \tilde{\mu}$ and that the  complementary slackness condition $\int_{\mathbb{T}^d} \hat{Q} d\tilde{\mu} = 0$ holds so that $\supp(d \tilde{\mu}) \subseteq \{ \thetab \in \mathbb{T}^d \, | \, \hat{Q}(e^{i\thetab}) = 0 \}$.

Applying the Wirtinger derivatives \cite[pp. 66-69]{remmert1991theory}
\begin{equation}
\label{eq:wirtinger}
\frac{\partial}{\partial z} = \frac{1}{2} \left( \frac{\partial}{\partial x} - i \frac{\partial}{\partial y} \right), 
 \quad \frac{\partial}{\partial \bar{z}} = \frac{1}{2} \left( \frac{\partial}{\partial x} + i \frac{\partial}{\partial y} \right), 
\end{equation}where   $z = x + i\,y$ is a complex variable, we obtain
\begin{equation*}
\frac{\partial L_P(Q, d\tilde{\mu})}{\partial \bar{q}_\kb} = c_\kb - \int_{\mathbb{T}^d} e^{i (\kb,\thetab)} \left( \frac{P}{Q} \dm + d\tilde{\mu} \right),
\end{equation*}
from which we see that a stationary point must satisfy the moment condition \eqref{eq:Cov}. This shows that there exists a singular measure $d \tilde{\mu}$ with the properties prescribed in the first part of the proof, such that $\psp = P/\hat{Q}\dm + d\tilde{\mu}$ matches the covariances, and we may therefore take $d \hat{\mu} = d \tilde{\mu}$. Next, for $\kb\in\Lambda$, we define 
\begin{equation}\label{eq:cHat}
\hat{c}_\kb : = \int_{\mathbb{T}^d} e^{i (\kb,\thetab)} d\hat{\mu} = c_\kb - \int_{\mathbb{T}^d} e^{i (\kb,\thetab)} \frac{P}{\hat{Q}} \dm
\end{equation}
 from which we see that $\hat{c}$ is unique, although $d \hat{\mu}$ might not be.
For a $Q \in \bar{\mathfrak{P}}_+ \setminus \{0\}$,
\begin{equation*}
\begin{array}{l}
\displaystyle \langle \hat{c}, q \rangle = \sum_{\kb \in \Lambda} \hat{c}_\kb \bar{q}_\kb = \sum_{\kb \in \Lambda} \left( \int_{\mathbb{T}^d} e^{i (\kb,\thetab)} d \hat{\mu} \right) \bar{q}_\kb = \int_{\mathbb{T}^d} Q d \hat{\mu} ,
\end{array}
\end{equation*}
which shows that $\langle \hat{c}, q \rangle \geq 0$ for all $Q \in \bar{\mathfrak{P}}_+ \setminus \{0\}$, and thus $\hat{c} \in \bar{\mathfrak{C}}_+$. However, for $\hat{Q}$ we have $\langle \hat{c}, \hat{q} \rangle = \int_{\mathbb{T}^d} \hat{Q} d \hat{\mu}=0$ by  complementary slackness, which shows that $\hat{c} \in \partial \mathfrak{C}_+$. Moreover, it is shown in  \cite{lang1982theextension} that  there exists a discrete representation with support in $|\Lambda|-1$ points for all $\hat{c} \in \partial \mathfrak{C}_+$.
To show that the solution is optimal also for the primal problem we observe that, for all $\psp=\Phi dm+d\hat \mu$,
\[
\mathbb{I}_P(\Phi) \leq \mathcal{L}_P (\Phi, \derivd \hat{\mu}, Q) \leq \mathbb{J}_P(Q) +  \int_{\mathbb{T}^d} P (\log P - 1) \dm .
\]
 Since equality holds for the feasible point $\psp = (P/\hat{Q}) \dm + d \hat{\mu}$, optimality follows. This completes the proof of Theorem~\ref{theo:conjecture}.
 
 An alternative proof of the results in Sections~\ref{sec:simicont}-\ref{sec:exist} can be constructed along the lines of \cite[Section 5]{Ferrante2007further}. In the proof of that paper they use the existence of a coercive spectral density, which in our case follows from the existence of a spectral density in the exponential family \cite{georgiou2006relative}. Also compare this with the proofs of Theorem 5.1 and Theorem 5.2 in \cite{karlsson 2015themultidimensional}, which deals with a more general setting. 
 
 \subsection{Comments and an example}\label{subsec:commentsAndExamples}
In the one-dimensional case it has already been observed that $P$ need not be confined to the cone $\bar{\mathfrak{P}}_+ \setminus \{0\}$ but could be a general nonnegative integrable function with zero locus of measure zero \cite{byrnes2003aconvex,byrnes2006thegeneralized}. 
This fact was implemented in \cite{georgio2003kullback} to interpret the functional \eqref{eq:entropyScalar}  as a Kullback-Leibler pseudo-distance between $P$ and $\Phi$ and hence with $P$ as a Kullback-Leibler prior. In fact, maximizing \eqref{eq:entropyScalar} is equivalent to minimizing the Kullback-Leibler divergence
\[
\mathbb{D}(P \| \Phi ) := \int_{\mT} P \log \left(\frac{P}{\Phi}\right) \dm,
\]
which is nonnegative for functions with the same total mass and equal to zero only when the functions are equal.
In our present more general setting, $P$ could be any absolutely integrable, nonnegative function for which  the set 
$\{\thetab \in \mathbb{T}^d \mid P(e^{i\thetab}) = 0\}$ has measure zero.  In this context it is also possible to interpret the functional \eqref{primalfunctional}  as a Kullback-Leibler distance, not only between the two functions $P$ and $\Phi$, but between the two measures $dp := P \dm$ and $d\mu$. Since $dp$ is absolutely continuous with respect to $d\mu$ we obtain  \cite{renyi1961measures} (see, in particular, equation 3.1)
\[
\int_{\mT^d} P \log \left( \frac{P}{\Phi} \right) \dm = \int_{\mT^d} \log \left( \frac{dp}{d\mu} \right) dp
\]
where $(dp/d\mu) = P/\Phi$ is the Radon-Nikodym derivative.  

Except in the one-dimensional case, the singular part of the measure is in general not unique. To illustrate this fact, we consider the following example in two dimensions, similar to Example~5.4 in \cite{karlsson2015themultidimensional}, where $Q$ has zeros along a line.

\begin{example}
Given $\Lambda = \{(0,0),  \, (-1,0), \, (1,0), \, (0,-1),$ $(0,1), \, (-1,-1), \, (1,1),$ $(-1, 1), \, (1, -1) \}$, consider
\begin{align*}
& P(e^{i\theta_1}, e^{i \theta_2}) = (1 - \cos \theta_1), \\
& \hat{Q}(e^{i\theta_1}, e^{i \theta_2}) = (1 - \cos \theta_1 )(2 - \cos \theta_2).
\end{align*}
Let $c$ be the covariances of the spectrum $\Phi = P/\hat{Q}$, i.e., $c_{0,0} = 1/\sqrt{3}, \, c_{1,0} = 0, \, c_{0,1} = -1 + 2/\sqrt{3}, \, c_{1,1} = 0$ and $c_{-1, 1} = 0$, the remaining covariances being uniquely determined by the conjugate symmetry $c_{-\kb} = \bar{c}_\kb$. Moreover, let  $\hat{c}$ be given by 
\[
\hat{c}_\kb = \int_{\mT^2} e^{i (\kb,\thetab)} \delta(\theta_1) d\theta_1 \frac{d\theta_2}{2\pi}
\]
so that  $\hat{c}_{0,0} = 1, \, \hat{c}_{1,0} = 1, \, \hat{c}_{0,1} = 0, \, \hat{c}_{1,1} = 0$ and $\hat{c}_{-1, 1} = 0$. Clearly $P, \hat{Q} \in \bar{\mathfrak{P}}_+$, and thus $c\in\mathfrak{C}_+$ since  
\[
\langle c, r \rangle = \sum_{\kb \in \Lambda} c_\kb \bar{r}_\kb = \int_{\mT^2} R(e^{i\thetab}) \frac{P(e^{i\thetab})}{\hat{Q}(e^{i\thetab})} dm(\thetab) > 0
\]
for any $R \in \bar{\mathfrak{P}}_+ \setminus \{0\}$.
In the same way, 
\begin{align*}
& \langle \hat{c},\hat{q}\rangle = 
\int_{\mT^2} \hat Q(e^{i\thetab}) \delta(\theta_1) dm(\thetab) 
= \int_{-\pi}^\pi (1-\cos \theta_1)\delta(\theta_1) d\theta_1  \int_{-\pi}^\pi (2-\cos \theta_2)\frac{d\theta_2}{2\pi}  =0,
\end{align*}
and thus $\hat{c}\in\partial\mathfrak{C}_+$. Hence,
$(\hat{Q}, \hat{c})$ is the unique pair prescribed by Theorem \ref{theo:conjecture} for the covariance sequence $c + \hat{c}$ and the numerator polynomial $P$. However, since $\hat{Q}$ is zero for  $\theta_1 = 0$, any measure $d\hat{\mu}$ with support constrained to the line $\theta_1=0$ and mass  $1$ such that $\int_{\mT^2} \cos \theta_2 d\hat{\mu} = 0$ is a solution.
\end{example}

\section{Well-posedness and  counter examples}\label{sec:corollary}
The intuition behind Corollary \ref{theo:langMcclellan} is that the optimal solution $\hat{Q}$ is repelled from the boundary by the following assumption (Assumption \ref{ass:divergingIntegral}) whenever $P \in \mathfrak{P}_+$.
Then, since the measure $\derivd\hat\mu$ can only have mass in the zeros of $Q$, we must have $\derivd\hat\mu = 0$. 

\begin{assumption}\label{ass:divergingIntegral}
The cone $\bar{\fP}_+$ has the property
\begin{equation*}
\int_{\mathbb{T}^d} \frac{1}{Q}\dm(\thetab)  = \infty \quad \text{for all  $Q \in \partial \mathfrak{P}_+$}.
\end{equation*}
\end{assumption}

As noted in \cite{byrnes2006thegeneralized}, Assumption~\ref{ass:divergingIntegral} always holds in the one-dimensional case ($d=1$), since the trigonometric functions are Lipschitz continuous. Using  results by Georgiou  \cite[p. 819]{georgiou2005solution} it can be shown that this assumption is also valid for  $d = 2$. However,  Lang and McClellan \cite{lang1982multidimensional} note that Assumption~\ref{ass:divergingIntegral} does not hold in general for dimensions $d \geq 3$. To see this, they consider the polynomial $Q(e^{i\thetab}) = \sum_{\ell = 1}^{d} (1 - \cos\theta_\ell )\in \partial\mathfrak{P}_{+} $ and show that $\int_{\mT^d} \frac{1}{Q} dx < \infty$ for $d \geq 3$. In fact, we have the following amplification of this fact, the proof of which we defer to the appendix.

\begin{proposition}\label{prop:d>2}
For $d\geq 3$, Assumption~\ref{ass:divergingIntegral} does not hold if the index set $\Lambda$ contains at least three linearly independent vector-valued indices.
\end{proposition}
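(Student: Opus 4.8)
The plan is to reduce the general case to the specific polynomial $Q(e^{i\thetab}) = \sum_{\ell=1}^{d}(1-\cos\theta_\ell)$ that Lang and McClellan analyzed, using the three linearly independent indices as the support of a suitable boundary polynomial. Suppose $\Lambda$ contains linearly independent $\kb_1,\kb_2,\kb_3 \in \mZ^d$. The key idea is that for each $\kb_j$ the function $1 - \cos(\kb_j,\thetab)$ is a nonnegative trigonometric polynomial whose exponents $\{0, \pm\kb_j\}$ lie in $\Lambda$ (using $0\in\Lambda$ and $-\Lambda=\Lambda$); hence
\[
\tilde Q(e^{i\thetab}) := \sum_{j=1}^{3}\bigl(1 - \cos(\kb_j,\thetab)\bigr)
\]
belongs to $\bar{\fP}_+$ with support in $\Lambda$. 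First I would check that $\tilde Q \in \partial\fP_+$: it vanishes exactly on the set where $(\kb_j,\thetab)\in 2\pi\mZ$ for all $j=1,2,3$ simultaneously, and since $\thetab=0$ is such a point, $\tilde Q$ has a zero and thus lies on the boundary. So it suffices to prove $\int_{\mT^d}\tilde Q^{-1}\,\dm < \infty$.

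The main work is the integrability estimate near the zero set of $\tilde Q$. Because the zero set is (generically) a positive-dimensional torus-subgroup, I would localize: near any zero $\thetab_0$, write $\thetab = \thetab_0 + \mathbf{s}$ and use that $1 - \cos(\kb_j,\thetab) = 1 - \cos(\kb_j,\mathbf{s})$ behaves like $\tfrac12(\kb_j,\mathbf{s})^2$ for small $\mathbf{s}$ (since $(\kb_j,\thetab_0)\in 2\pi\mZ$). Thus locally
\[
\tilde Q(e^{i\thetab}) \,\gtrsim\, \sum_{j=1}^{3}(\kb_j,\mathbf{s})^2 \;=\; \mathbf{s}^{\T} A\, \mathbf{s},
\qquad A := \sum_{j=1}^{3}\kb_j\kb_j^{\T},
\]
and since $\kb_1,\kb_2,\kb_3$ are linearly independent, $A$ is a symmetric positive semidefinite matrix of rank exactly $3$. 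Decomposing $\mathbf{s}$ into its component $\mathbf{u}\in\mathrm{range}(A)\cong\mR^3$ and $\mathbf{v}$ in the orthogonal complement ($\cong\mR^{d-3}$), we get $\tilde Q \gtrsim |\mathbf{u}|^2$ locally, so the local integral is bounded by $\int_{|\mathbf v|\le\delta}\!\int_{|\mathbf u|\le\delta} |\mathbf u|^{-2}\,d\mathbf u\,d\mathbf v$, and since $\int_{|\mathbf u|\le\delta}|\mathbf u|^{-2}\,d\mathbf u < \infty$ in dimension $3$ (the exponent $2$ is below the critical value $3$), this is finite. Away from the zero set $\tilde Q$ is bounded below by a positive constant, so that part of the integral is trivially finite; covering $\mT^d$ by finitely many such neighborhoods completes the bound.

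Finally, I would invoke monotonicity: since $0 < \tilde Q(e^{i\thetab})$ wherever it is positive and, more importantly, since $\tilde Q/\|\tilde Q\|_\infty \le 1$, one can absorb constants freely; the upshot is $\int_{\mT^d} \tilde Q^{-1}\,\dm < \infty$ while $\tilde Q \in \partial\fP_+$, which directly contradicts Assumption \ref{ass:divergingIntegral}. Hence the assumption fails for $d\ge 3$ under the stated hypothesis on $\Lambda$. The main obstacle is the local analysis of the integrable singularity: one must handle the fact that the zero set is not a single point but a subtorus of codimension $3$, and verify uniformly over all its points that the quadratic lower bound $\mathbf{s}^{\T}A\mathbf{s}$ with $\operatorname{rank}A = 3$ holds with constants independent of the base point — this is where linear independence of the three indices is essential (it is exactly what makes $\operatorname{rank}A = 3$ rather than smaller, keeping the singularity integrable in effective dimension $3$).
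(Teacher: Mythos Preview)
Your construction of $\tilde Q$ and the verification that $\tilde Q\in\partial\fP_+$ coincide exactly with the paper's choice. The difference is in how you establish finiteness of $\int_{\mT^d}\tilde Q^{-1}\,\dm$.

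The paper does not localize. Instead it makes a single global linear change of variables $\boldsymbol{\phi}=A\thetab$, where $A\in\mR^{d\times d}$ is chosen invertible with its first three rows equal to $\kb_1,\kb_2,\kb_3$. Then $(\kb_j,\thetab)=\phi_j$ for $j=1,2,3$, the integrand becomes $\bigl(\sum_{\ell=1}^3(1-\cos\phi_\ell)\bigr)^{-1}$, and periodicity in $\phi_1,\phi_2,\phi_3$ together with the trivial dependence on $\phi_4,\dots,\phi_d$ bounds the integral over the parallelepiped $A(\mT^d)$ by a constant times $\int_{\mT^3}\bigl(\sum_{\ell=1}^3(1-\cos\phi_\ell)\bigr)^{-1}\,d\phi$, which is exactly the Lang--McClellan integral already known to be finite.

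Your route---a local Taylor bound $\tilde Q\gtrsim \mathbf{s}^{\T}A\mathbf{s}$ with $A=\sum_j\kb_j\kb_j^{\T}$ of rank $3$, then integrability of $|\mathbf{u}|^{-2}$ in the three effective directions, patched together by compactness---is correct and more self-contained, since you verify the $3$-dimensional integrability directly rather than quoting Lang--McClellan. The paper's argument is shorter and sidesteps the need to analyze the zero set or worry about uniformity of constants over base points: the change of variables handles the whole torus at once and the ``codimension-$3$ subtorus'' structure of the zero set drops out automatically from the fact that the new integrand depends only on $\phi_1,\phi_2,\phi_3$. In effect, your decomposition $\mathbf{s}=\mathbf{u}+\mathbf{v}$ is the infinitesimal version of the paper's global coordinate change.
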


Observe that a problem of dimension $d\geq 3$ for which $\Lambda$ contains less than three linearly independent vector-valued indices trivially reduces to a problem in one or two dimensions. Hence in general we identify Assumption~\ref{ass:divergingIntegral} with the case $d\leq 2$.
Corollary \ref{theo:langMcclellan} now follows directly from the following lemma.

\begin{lemma}\label{lem:noSolOnBoundary}
Let $P \in \mathfrak{P}_+$, and suppose that Assumption \ref{ass:divergingIntegral} holds. Then the optimal solution $\hat Q$ to the problem to minimize \eqref{dualfunctional} over all $Q\in\bar{\mathfrak{P}}_+$ belongs to $ \mathfrak{P}_+$.
\end{lemma}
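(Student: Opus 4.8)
The plan is a proof by contradiction. By Theorem~\ref{theo:conjecture} the dual functional $\mathbb{J}_P$ has a unique minimizer $\hat{Q}\in\bar{\mathfrak{P}}_+\setminus\{0\}$, and since $\mathbb{J}_P(0)=+\infty$ this $\hat{Q}$ also minimizes $\mathbb{J}_P$ over all of $\bar{\mathfrak{P}}_+$. Suppose, contrary to the claim, that $\hat{Q}\in\partial\mathfrak{P}_+$, i.e.\ $\hat{Q}(e^{i\thetab_0})=0$ for some $\thetab_0\in\mathbb{T}^d$. I would then produce a feasible polynomial with strictly smaller objective value, contradicting minimality. The natural perturbation is the constant polynomial $1$: since $0\in\Lambda$ it lies in $\mathfrak{P}_+$, and for every $\epsilon>0$ the polynomial $\hat{Q}+\epsilon$ is strictly positive on $\mathbb{T}^d$, hence lies in $\mathfrak{P}_+\subset\bar{\mathfrak{P}}_+$.

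The core of the argument is to show that the one-variable function $g(\epsilon):=\mathbb{J}_P(\hat{Q}+\epsilon)$, $\epsilon\ge 0$, has right derivative $-\infty$ at $\epsilon=0$. It is convex on $[0,\infty)$ (restriction of the convex functional $\mathbb{J}_P$ to a segment), it is finite for $\epsilon>0$ (since $\hat{Q}+\epsilon\in\mathfrak{P}_+$), and it is finite at $\epsilon=0$ because the Mahler measure of any $Q\in\bar{\mathfrak{P}}_+\setminus\{0\}$ is finite (as recalled before Lemma~\ref{lem:Jcont}). Using the first-variation formula for $\mathbb{J}_P$ derived earlier, with the perturbation direction $\delta Q\equiv 1$ whose coefficient vector pairs with $c$ to give $c_0$, one has for $\epsilon>0$
\[
g'(\epsilon)=c_0-\int_{\mathbb{T}^d}\frac{P}{\hat{Q}+\epsilon}\,\dm .
\]
Since $P\in\mathfrak{P}_+$ is continuous and strictly positive on the compact torus, $P\ge p_{\min}>0$, so the integrand dominates $p_{\min}/(\hat{Q}+\epsilon)$; by monotone convergence the integral increases to $\int_{\mathbb{T}^d}P/\hat{Q}\,\dm\ge p_{\min}\int_{\mathbb{T}^d}(1/\hat{Q})\,\dm=\infty$ as $\epsilon\downarrow 0$, the last equality being precisely Assumption~\ref{ass:divergingIntegral} applied to $\hat{Q}\in\partial\mathfrak{P}_+$. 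Hence $g'(\epsilon)\to-\infty$ as $\epsilon\downarrow0$.

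It remains to convert this blow-up into a strict decrease of $g$. Since $g'$ is nondecreasing (convexity) and tends to $-\infty$, there is $\epsilon_0>0$ with $g'<0$ on $(0,\epsilon_0)$, so $g$ is strictly decreasing on that interval; on the other hand convexity gives $\limsup_{\epsilon\downarrow0}g(\epsilon)\le g(0)$. Combining these two facts yields $g(\epsilon_1)<g(0)$ for a suitable $\epsilon_1\in(0,\epsilon_0)$, i.e.\ $\mathbb{J}_P(\hat{Q}+\epsilon_1)<\mathbb{J}_P(\hat{Q})$, contradicting the minimality of $\hat{Q}$. Therefore $\hat{Q}\in\mathfrak{P}_+$. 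The only point requiring care is the boundary bookkeeping at $\epsilon=0$: that $\mathbb{J}_P(\hat{Q})$ is genuinely finite (so that ``strictly smaller'' is meaningful) and that the limit may be passed inside the integral; these are exactly the roles played by the finiteness of the Mahler measure and by monotone convergence, and I expect no further obstacle.
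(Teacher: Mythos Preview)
Your proof is correct and follows essentially the same approach as the paper: perturb the boundary point by the constant polynomial $1$, compute the directional derivative $c_0-\int_{\mathbb{T}^d}P/(\hat Q+\epsilon)\,dm$, and use monotone convergence together with Assumption~\ref{ass:divergingIntegral} and the strict positivity of $P$ to conclude that this derivative tends to $-\infty$. The paper carries out the same computation at an arbitrary $Q\in\partial\mathfrak{P}_+$ and simply asserts that ``$1$ is a descent direction''; your version is framed as a contradiction at the specific minimizer $\hat Q$ and is more explicit about the convexity bookkeeping (finiteness of $g(0)$ via the Mahler measure, and the passage from $g'(\epsilon)\to-\infty$ to a strict decrease of $g$), but the underlying idea is identical.
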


\begin{proof} 
Let $Q \in \partial \mathfrak{P}_+$ be arbitrary. Then, for any $\rho > 0$,  $Q(e^{i\thetab}) + \rho > 0$ for all $\thetab\in\mT^d$.
 Hence the functional $\mathbb{J}_P$ is also differentiable in $Q + \rho$, and the directional derivative in the direction $1$ is 
\begin{equation*}
\delta \mathbb{J}_P(Q + \rho; 1) = \langle c, 1\rangle - \int_{\mathbb{T}^d} \frac{P}{Q + \rho} \dm.
\end{equation*}
Now note that $P/(Q + \rho)$ is nonnegative in all points, that it is pointwise monotone increasing for decreasing values of $\rho$, and that it converges pointwise in extended real-valued sense%
\footnote{
That is, the limit may be $\infty$.
}  
to $P/Q$. Hence by Lebesgue's monotone convergence theorem \cite[p. 21]{rudin1987real} we have, as $\rho\to 0$, 
\begin{equation*}
\int_{\mathbb{T}^d} \frac{P}{Q + \rho} \dm \; \longrightarrow \; \int_{\mathbb{T}^d} \frac{P}{Q} \dm  ,
\end{equation*}
which, since $P \in \mathfrak{P}_+$, is infinite by Assumption \ref{ass:divergingIntegral}. Therefore $1$ is a  descent direction from the point $Q$, and hence the optimal solution is not obtained there. Since $Q \in \partial \mathfrak{P}_+$ is arbitrary, this means that the optimal solution is not attained on the boundary, i.e., we have $\hat Q \in\mathfrak{P}_+$.
\end{proof}

 It turns out that the multidimensional rational covariance extension problem for $d\leq 2$ is in fact well-posed in the sense of Hadamard, i.e., the solution depends  smoothly on $c$ and $P$, which is an important property when it comes to tuning of solutions to design specifications. This follows from the following generalizations to the multidimensional case of Theorems 1.3 and 1.4 in \cite{byrnes2006thegeneralized}, proved in 
the appendix.

\begin{theorem}\label{theo:fp}
Let $f^p: \mathfrak{P}_+ \rightarrow \mathfrak{C}_+$ be the map from $Q$ to $c$, given component-wise by
\begin{equation*}
c_\kb = \int_{\mathbb{T}^d} e^{i (\kb, \thetab)} \frac{P}{Q} \dm
\end{equation*}
for a fixed $P \in \mathfrak{P}_+$. If $d\leq 2$, $f^p$ is a diffeomorphism.
\end{theorem}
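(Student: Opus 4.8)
The plan is to prove Theorem~\ref{theo:fp} by verifying that $f^p$ is a smooth bijection with everywhere-nonsingular Jacobian, and then invoking the inverse function theorem to conclude it is a diffeomorphism. First I would observe that $f^p$ is well-defined: by Corollary~\ref{theo:langMcclellan} (valid since $d\leq 2$) and the corollary following Theorem~\ref{theo:conjecture}, for any $Q\in\mathfrak{P}_+$ the spectrum $P/Q$ has all its moments in $\mathfrak{C}_+$, and conversely for any $c\in\mathfrak{C}_+$ there is a $Q\in\mathfrak{P}_+$ with $(P/Q)dm$ matching $c$. This already gives surjectivity of $f^p$. Injectivity follows from the corollary to Theorem~\ref{theo:conjecture}: if $P/Q_1$ and $P/Q_2$ both match the same $c\in\mathfrak{C}_+$, then both $Q_1$ and $Q_2$ are the \emph{unique} minimizer of the strictly convex functional $\mathbb{J}_P$ over $\bar{\mathfrak{P}}_+$, hence $Q_1=Q_2$. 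So $f^p$ is a bijection from $\mathfrak{P}_+$ onto $\mathfrak{C}_+$.

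Next I would establish smoothness and compute the Jacobian. The map $Q\mapsto c$ is given componentwise by $c_\kb=\int_{\mathbb{T}^d} e^{i(\kb,\thetab)} (P/Q)\,dm$; since $Q>0$ on the compact torus, differentiation under the integral sign is justified, and $f^p$ is $C^\infty$ in the coefficients $q$. Its differential in a direction $\delta Q$ has components $\delta c_\kb=-\int_{\mathbb{T}^d} e^{i(\kb,\thetab)} (P/Q^2)\,\delta Q\,dm$. To see this linear map is injective, note that $\langle \delta c,\delta q\rangle = -\int_{\mathbb{T}^d} (P/Q^2)\,|\delta Q|^2\,dm$, which (up to sign, and matching the structure of $\delta^2\mathbb{J}_P$ already computed in the excerpt) vanishes only if $\delta Q=0$ a.e., hence identically since $\delta Q$ is a trigonometric polynomial and $P/Q^2>0$ a.e. Thus the Jacobian of $f^p$ is nonsingular at every point of $\mathfrak{P}_+$. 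Since $\mathfrak{P}_+$ and $\mathfrak{C}_+$ are open subsets of (real-linearly isomorphic copies of) Euclidean space of the same dimension, the inverse function theorem gives that $f^p$ is a local diffeomorphism, and combined with the global bijectivity established above, a global diffeomorphism.

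The main obstacle I anticipate is the bookkeeping around the open cones $\mathfrak{P}_+$ and $\mathfrak{C}_+$: one must be careful that the surjectivity statement really lands $c$ in the \emph{open} cone $\mathfrak{C}_+$ (which comes from Corollary~\ref{theo:langMcclellan} producing $Q\in\mathfrak{P}_+$ rather than on the boundary, so that $d\hat\mu=0$ and $c=\hat c$ has the strict positivity needed — this is exactly the content of Lemma~\ref{lem:noSolOnBoundary}), and that $f^p$ maps \emph{into} $\mathfrak{C}_+$ as claimed in the statement. One also needs to identify $\mathfrak{P}_+$ and $\mathfrak{C}_+$ with open subsets of a real vector space of the correct common dimension — the coefficient vectors $q=\{q_\kb\}_{\kb\in\Lambda}$ with the Hermitian symmetry $q_{-\kb}=\bar q_\kb$ form a real space of dimension $|\Lambda|$, and likewise for $c$, so the dimension count for the inverse function theorem goes through. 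Everything else is routine once the bijectivity and the nonsingular-Jacobian computations are assembled, both of which are immediate consequences of the strict convexity of $\mathbb{J}_P$ and the uniqueness statements already proved.
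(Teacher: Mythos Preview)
Your proposal is correct and follows essentially the same approach as the paper: bijectivity from Corollary~\ref{theo:langMcclellan} (surjectivity from existence, injectivity from uniqueness of the minimizer of $\mathbb{J}_P$), nonsingularity of the Jacobian from the strict positivity of $\delta^2\mathbb{J}_P$, and then a local-to-global argument. The only cosmetic difference is that the paper applies the Implicit Function Theorem to the defect map $\varphi^p(c,q)=c_\kb-\int e^{i(\kb,\thetab)}(P/Q)\,dm$ to obtain that $(f^p)^{-1}$ is $C^1$, whereas you invoke the Inverse Function Theorem directly on $f^p$; these are equivalent.
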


\begin{theorem}\label{theo:gc}
Suppose that $d\leq 2$. Let  $f^p$ be as in Theorem \ref{theo:fp}, and let $c \in \mathfrak{C}_+$ be fixed. Then the function $g^c: \mathfrak{P}_+ \rightarrow \mathfrak{P}_+$ mapping $P$ to $Q = (f^p)^{-1}(c)$  is a diffeomorphism onto its image $\mathfrak{Q}_+$.
\end{theorem}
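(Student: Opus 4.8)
The plan is to use Theorem~\ref{theo:fp} together with the inverse/implicit function theorem machinery already set up for the pair $(c,P)\mapsto Q$. The starting point is the observation that $g^c$ is well-defined: by Theorem~\ref{theo:fp} the map $f^p\colon\mathfrak{P}_+\to\mathfrak{C}_+$ is a diffeomorphism for every fixed $P\in\mathfrak{P}_+$, so $(f^p)^{-1}(c)$ makes sense, and by Corollary~\ref{theo:langMcclellan} the resulting $Q$ lies in $\mathfrak{P}_+$. Thus $g^c(P)=(f^p)^{-1}(c)$ is a genuine map $\mathfrak{P}_+\to\mathfrak{P}_+$; write $\mathfrak{Q}_+:=g^c(\mathfrak{P}_+)$ for its image. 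It suffices to show $g^c$ is a smooth injection with smooth inverse on $\mathfrak{Q}_+$.

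First I would exhibit smoothness. Consider the joint map $F\colon\mathfrak{P}_+\times\mathfrak{P}_+\to\mathfrak{C}_+$ defined componentwise by $F(P,Q)_\kb=\int_{\mathbb{T}^d}e^{i(\kb,\thetab)}(P/Q)\,\dm$. Since $P/Q$ is a smooth (indeed real-analytic) function of the coefficients $(p,q)$ on the open set where $Q>0$ on $\mathbb{T}^d$, differentiation under the integral sign shows $F$ is $C^\infty$ (this is the same computation underlying the Hessian formula $\delta^2\mathbb{J}_P$ in Section~\ref{sec:multidimRCEP}). The graph of $g^c$ is cut out by $F(P,Q)=c$; the partial differential $\partial F/\partial Q$ at a point $(P,Q)$ with $F(P,Q)=c$ is exactly the differential $df^p$ of the map in Theorem~\ref{theo:fp}, which is invertible there because $f^p$ is a diffeomorphism. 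Hence the implicit function theorem applies and yields that $Q=g^c(P)$ depends smoothly on $P$; explicitly $dg^c=-(\partial F/\partial Q)^{-1}(\partial F/\partial P)$.

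Next, injectivity. Suppose $g^c(P_1)=g^c(P_2)=Q$. Then both $(P_1/Q)\,\dm$ and $(P_2/Q)\,\dm$ have the same moments $\{c_\kb\}_{\kb\in\Lambda}$, so $\langle P_1-P_2,\,r\rangle$-type pairing vanishes: more precisely, for every trigonometric polynomial $R$ of the relevant degree, $\int_{\mathbb{T}^d}R\,(P_1-P_2)/Q\,\dm=0$. Taking $R=\overline{P_1-P_2}\cdot Q$ — which is a nonnegative trigonometric polynomial of admissible degree, since $P_1,P_2,Q$ all have coefficient support in $\Lambda$ and $|P_1-P_2|^2$ has support in $\Lambda-\Lambda$; here one uses that $Q$ cancels and that the degree bookkeeping works out, a point to be checked carefully — gives $\int_{\mathbb{T}^d}|P_1-P_2|^2/Q\,\cdot\,(\text{something positive})\,\dm=0$, forcing $P_1\equiv P_2$. (Alternatively, and more cleanly, one argues via the dual functional: by Corollary~1, $Q$ is simultaneously the minimizer of $\mathbb{J}_{P_1}$ and of $\mathbb{J}_{P_2}$ over $\bar{\mathfrak{P}}_+$, and the stationarity conditions $\langle c,\delta q\rangle=\int (P_i/Q)\,\delta Q\,\dm$ for all $\delta q$ then give $\int (P_1-P_2)/Q\cdot\delta Q\,\dm=0$ for all admissible $\delta Q$, and choosing $\delta Q$ appropriately yields $P_1=P_2$.)

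Finally, since $g^c$ is a smooth injection and at each point its differential $dg^c=-(df^p)^{-1}(\partial F/\partial P)$ has a smooth left inverse — indeed one checks $\partial F/\partial P$ is injective by the same positivity argument, so $dg^c$ is injective and, both spaces having the same dimension $|\Lambda|$ (or $|\Lambda|$ adjusted for the real structure), $dg^c$ is in fact invertible — the inverse function theorem shows $g^c$ is a local diffeomorphism, and combined with injectivity it is a diffeomorphism onto the open set $\mathfrak{Q}_+$.

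The main obstacle I anticipate is the injectivity step together with the verification that $dg^c$ is invertible (equivalently that $\partial F/\partial P$ is injective): this is where the structure of $\Lambda$ as a symmetric index set and the positivity of $P/Q$ must be used, and the degree/support bookkeeping for the test polynomials $R$ needs care to stay within $\bar{\mathfrak{P}}_+$ restricted to $\Lambda$. The smoothness and the local-diffeomorphism packaging are routine once Theorem~\ref{theo:fp} is in hand.
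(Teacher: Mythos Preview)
Your proposal is correct and follows essentially the same route as the paper: define the joint moment map (the paper calls it $\varphi^c$), apply the implicit function theorem using invertibility of $\partial F/\partial Q$ (inherited from Theorem~\ref{theo:fp}) and of $\partial F/\partial P$ (the matrix $-\int_{\mathbb{T}^d} e^{i(\kb-\lbb,\thetab)}/Q\,dm$ is negative definite since $1/Q>0$), and upgrade the local diffeomorphism to a global one via injectivity. Regarding the obstacle you flagged, your \emph{alternative} injectivity argument is the clean one and removes the degree worry: the moment conditions give $\int_{\mathbb{T}^d} e^{i(\kb,\thetab)}(P_1-P_2)/Q\,dm=0$ for every $\kb\in\Lambda$, and since $P_1-P_2$ itself has coefficient support in $\Lambda$ one may take $\delta Q=P_1-P_2$ to obtain $\int_{\mathbb{T}^d}(P_1-P_2)^2/Q\,dm=0$; the paper handles this step by citing a straightforward generalization of \cite[Lemma~2.4]{byrnes2006thegeneralized}.
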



By Corollary \ref{theo:langMcclellan}, the unique solution $\hat Q$ of the dual problem belongs to the interior $\mathfrak{P}_+$ for every pair $(c, P) \in \mathfrak{C}_+ \times \mathfrak{P}_+$ if Assumption \ref{ass:divergingIntegral} holds.
Note  that, while the more general Theorem \ref{theo:conjecture} holds for all $P \in \bar{\mathfrak{P}}_+ \setminus \{0\}$, Corollary~\ref{theo:langMcclellan} is only valid for $P \in \mathfrak{P}_+$.
The reason for this is that if $P \in \mathfrak{P}_+$ the directional derivative
of $\mathbb{J}_P$ tends to $-\infty$ on the boundary by Assumption \ref{ass:divergingIntegral}, so a minimum is not attained there, as we just saw in the proof of Lemma~\ref{lem:noSolOnBoundary}.  
On the other hand, if $P \in \partial \mathfrak{P}_+$, we have $\int_{\mathbb{T}^d} (P/Q) \dm < \infty $ for some $Q \in \partial \mathfrak{P}_+$, take for example $Q = P$. More generally, the integral may not diverge if the zeros of $Q$ belong to a subset of the zeros of $P$. In this case, there is no guarantee that the optimal solution is an interior point. 
The following simple one-dimension example  illustrates this.

\begin{example}\label{ex:1Dexamplesingular}
Consider a one-dimensional problem of degree one, i.e., with $\Lambda=\{-1,0,1\}$. Fix $c=(1,c_1)$, where $c_1\in (-1,0)$ is arbitrary.  Clearly the Toeplitz matrix {\color{black} $T(c)=\left[c_{k-\ell}\right]_{k,\ell=0}^n$} is positive definite, and hence $c \in \mathfrak{C}_+$. We fix $P(e^{i\theta})=2+e^{i\theta}+e^{-i\theta}$, 
which belongs to $\partial \mathfrak{P}_+$  since $P(e^{i\pi}) = 0$. We want to find a $Q\in\mathfrak{P}_+$ of degree at most one so that $\Phi=P/Q$  matches the covariance sequence $c$, i.e, 
\begin{equation}
\label{matching2c}
c_k=\int_\mT e^{ik\theta}\frac{P}{Q}dm, \quad k=0,1.
\end{equation}
Any such $Q$ must have the form $Q(e^{i\theta})=\lambda(1-\rho e^{i\theta})(1-\bar{\rho} e^{-i\theta})$
for some $\lambda>0$ and $|\rho|<1$. Now, clearly
\begin{equation*}
\Phi(e^{i\theta})= \lambda^{-1}\frac{2+e^{i\theta}+e^{-i\theta}}{1 - |\rho|^2} \frac{1 - |\rho|^2}{(1-\rho e^{i\theta})(1-\bar{\rho} e^{-i\theta})},
\end{equation*}
where the second factor takes the form
\begin{equation*}
\frac{1}{1-\rho e^{i\theta}}+\frac{1}{1-\bar\rho e^{-i\theta}}-1 \vspace*{2pt} 
= \cdots+\bar\rho^2 e^{-2i\theta}+\bar \rho e^{-i\theta}+1+\rho e^{i\theta}+\rho^2 e^{2i\theta}+\cdots,
\end{equation*}
which implies that $c_0= \lambda^{-1}(2+\rho+\bar \rho)(1-|\rho|^2)^{-1}$ and $c_1= \lambda^{-1}(1+\rho)^2(1-|\rho|^2)^{-1}$. 
Since $c_0 = 1$, we have $c_1=(1+\rho)^2(2+\rho+\bar \rho)^{-1}$,
which has positive, real denominator. Then, since $c_1<0$, $1+\rho$ is purely imaginary, which is impossible since $1+\rho$ has a positive real part. Hence, there is no $Q\in\mathfrak{P}_+$ of degree at most one  satisfying \eqref{matching2c}. However, for a certain $Q\in\partial \mathfrak{P}_+$, namely $Q(e^{i\theta}) = (2+e^{i\theta}+e^{-i\theta})/(1 + c_1)$,
we obtain $\psp = (P/Q)\dm - c_1 \delta(\theta-\pi)d\theta$, i.e., 
\[
\psp = (1 + c_1) \dm - c_1 \delta(\theta-\pi)d\theta,
\]
which matches $c$ with $-1 < c_1 < 0$.  Now  $\Phi=1 + c_1$ and the singular measure $d\hat\mu=\delta(\theta-\pi)d\theta$ has all its mass at the zero of $Q$, as required by Theorem~\ref{theo:conjecture}.

In this context it is interesting to note that  the covariance extension problem is usually formulated as a partial realization problem where one wants to determine an extension of the partial covariance sequence $c$ so that 
\begin{displaymath}
\Phi_+(z)=\frac12c_0+\sum_{k=1}^\infty c_kz^{-k}
\end{displaymath}
is positive real, i.e., $\Phi_+$ maps the unit disc to the right half of the complex plane; see, e.g., \cite{LindquistPicci2015}. Then $\Phi_+(e^{i\theta})+\Phi_+(e^{i\theta})^*$ is the corresponding spectral density $\Phi(e^{i\theta})$. In our example such a solution is provided by
\begin{displaymath}
\Phi_+(z)=\frac12\left(1+c_1-c_1\frac{1-z}{1+z}\right)=\frac12+c_1z-c_1z^2+\cdots,
\end{displaymath}
yielding precisely $\Phi=1 + c_1$. The singular measure never appears in this framework. 
\end{example}

\section{Logarithmic moments and cepstral matching}\label{sec:cepstMatch}

Given $c\in\mathfrak{C}_+$,  Corollary \ref{theo:langMcclellan} and Theorem~\ref{theo:gc}  together provide a complete smooth parameterization in terms of $P\in\mathfrak{P}_+$ of all $\Phi=P/Q$ such that $d\mu=\Phi dm$ satisfies the moment equations \eqref{eq:Cov}. Therefore the solution can be tuned to satisfy additional design specification by adjusting  $P$. How to determine the best $P$ is, however, a separate problem. 
Theorem \ref{theo:cepstral}, to be proved next, extends results from the one-dimensional case to simultaneously  estimate $P$ using the cepstral coefficients and logarithmic moment matching.

\begin{proofWithName}{Proof of Theorem \ref{theo:cepstral}}
The proof follows along the same lines as that of Theorem 
\ref{theo:conjecture}. By relaxing the primal problem (P)  we get the Lagrangian
\begin{eqnarray}
\mathcal{L} (\Phi, P, Q) &=& \int_{\mathbb{T}^d} \log\Phi \, \dm
 + \sum_{\kb \in \Lambda} \bar{q}_\kb \left(c_\kb - \int_{\mathbb{T}^d} e^{i (\kb,\thetab)} \Phi\, \dm \right)\label{eq:LagrangianCeps}\\
 &+&\sum_{\kb \in \Lambda\setminus\{ 0\}}\bar{p}_\kb \left(\int_{\mathbb{T}^d} e^{i (\kb,\thetab)} \log\Phi \, \dm - \gamma_\kb \right),\nonumber
\end{eqnarray}
where $\bar{q}_{k}$ and $\bar{p}_{k}$ are Lagrangian multipliers. Setting $ p_0 =\gamma_0= 1$ and rearranging terms, this can be written as
\begin{equation}
\label{LagrangianCeps2}
\mathcal{L}(\Phi, P, Q) = \langle c, q \rangle - \int_{\mathbb{T}^d} Q \Phi \,\dm
- \langle \gamma, p \rangle + 1 + \int_{\mathbb{T}^d} P \log \Phi \,\dm,
\end{equation}
where the first term in \eqref{eq:LagrangianCeps} has been incorporated in the last term of \eqref{LagrangianCeps2}.
As before, $\sup_{\Phi\ge 0} \mathcal{L} (\Phi, P, Q)$ is only finite if we restrict $Q$ to  $\bar{\mathfrak{P}}_+$, and similarly we need to restrict $P$ to  $\bar{\mathfrak{P}}_{+,\circ}$. Taking the directional derivative of \eqref{LagrangianCeps2} in any direction $\delta \Phi$ such that $\Phi + \epsilon \delta \Phi$ is a nonnegative $\LOne$ function for all $\varepsilon\in (0,a)$ for a sufficiently small  $ a>0$, we obtain
\begin{equation*}
\delta \mathcal{L}(\Phi, P, Q ; \delta \Phi) = \int_{\mathbb{T}^d} ( P \frac{1}{\Phi} - Q ) \delta \Phi \dm.
\end{equation*}
For the directional derivative to be nonpositive for all feasible directions $\delta \Phi$ we need $\Phi=P/Q$ a.e. (cf. Section~\ref{sec.dualderivation}),
which inserted into \eqref{LagrangianCeps2} yields 
\begin{equation}\label{eq:tmp1}
\sup_{\Phi} \mathcal{L} (\Phi, P, Q) = \mathbb{J}(P,Q) + 1 - \int_{\mathbb{T}^d} P \dm ,
\end{equation}
with $\mathbb{J}(P,Q)$ given by \eqref{cevcepstrdual}. 
A closer look at the last term in \eqref{eq:tmp1} shows that
\begin{displaymath}
\int_{\mathbb{T}^d} \hspace*{-2pt} P \dm = \int_{\mathbb{T}^d} \sum_{\kb \in \Lambda} p_\kb  e^{i (\kb, \thetab)} \dm 
=\sum_{\kb \in \Lambda} p_\kb  \prod_{j=1}^d\int_{-\pi}^{\pi} e^{i k_j \theta_j} \frac{\derivd \theta_j}{2\pi} =1,
\end{displaymath}
since all integrals vanish  except those for $k_1 = \ldots = k_d = 0$. Consequently, $\mathbb{J}$ is precisely the dual functional \eqref{eq:tmp1}. 

Using the Wirtinger derivative from \eqref{eq:wirtinger} to form the gradient of $\mathbb{J}$, we obtain 
\begin{subequations}
\begin{equation}
\label{dJ/dq}
\frac{\partial \mathbb{J}(P,Q)}{\partial \bar{q}_{\kb}} = c_\kb - \int_{\mathbb{T}^d} \!\!   e^{i (\kb,\thetab)} \frac{P}{Q}\, \dm,  \quad \kb\in\Lambda ,\phantom{xxxxxxxxxx}
\end{equation}
\begin{equation}
\label{dJ/dp}
\frac{\partial \mathbb{J}(P,Q)}{\partial \bar{p}_\kb} =  \int_{\mathbb{T}^d} \!\!  e^{i (\kb, \thetab)} \log \left( \frac{P}{Q} \right) \dm - \gamma_\kb, \quad \kb\in\Lambda\setminus\{0\}.
\end{equation}
\end{subequations}
In deriving \eqref{dJ/dp} we used the fact that 
\begin{equation}
\label{expintegral}
\int_{\mathbb{T}^d}\!\! e^{i(\kb, \thetab)} \dm=\prod_{j=1}^d\int_{-\pi}^{\pi} e^{i k_j \theta_j} \frac{\derivd \theta_j}{2\pi}=0, \quad \kb\ne 0.
\end{equation} 
Therefore, if $\hat{P} \in \mathfrak{P}_{+, \circ}$ and $\hat{Q} \in \mathfrak{P}_{+}$, and hence the optimal solution is a stationary point of $\mathbb{J}$, then the spectrum $\Phi = \hat{P}/\hat{Q}$ fulfills both covariance matching \eqref{eq:Cov} and cepstral matching \eqref{eq:cepstrum}.

The following three lemmas ensure the existence of a solution and shows that the problem is in fact convex. The arguments are similar to those in the proof of Theorem~\ref{theo:conjecture}, and are given in 
the appendix.

\begin{lemma}\label{lem:JcontCeps}
Given $c \in \mathfrak{C}_+$ and a sequence $\gamma=\{\gamma_\kb\}_{\kb \in \Lambda}$ with $\gamma_{0}=1$ and $\gamma_{-\kb} = \bar{\gamma}_{\kb}$, the functional $(P,Q)\mapsto\mathbb{J}(P,Q)$ is lower semicontinuous on $\bar{\mathfrak{P}}_{+,\circ} \times (\bar{\mathfrak{P}}_+ \setminus \{0\})$.
\end{lemma}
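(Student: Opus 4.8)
The plan is to mimic the proof of Lemma \ref{lem:Jcont} (whose details are deferred to the appendix), splitting the functional $\mathbb{J}(P,Q)$ into terms that are genuinely continuous and a single problematic term, and then establishing lower semicontinuity of the latter. Write
\[
\mathbb{J}(P,Q) = \langle c, q \rangle - \langle \gamma, p \rangle + \int_{\mathbb{T}^d} P \log P \,\dm - \int_{\mathbb{T}^d} P \log Q \,\dm .
\]
The two linear forms $\langle c,q\rangle$ and $\langle\gamma,p\rangle$ are continuous in the coefficients of $P$ and $Q$, hence continuous on the whole product cone. The third term $\int_{\mathbb{T}^d} P\log P\,\dm$ is continuous on $\bar{\mathfrak{P}}_{+,\circ}$: indeed, $x\mapsto x\log x$ is continuous on $[0,\infty)$ (with value $0$ at $x=0$), bounded on the compact range of $P$ over $\mathbb{T}^d$, and $P$ depends continuously on its finitely many coefficients, so by dominated convergence the integral is continuous. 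Thus everything reduces to showing that the map $(P,Q)\mapsto -\int_{\mathbb{T}^d} P\log Q\,\dm$ is lower semicontinuous on $\bar{\mathfrak{P}}_{+,\circ}\times(\bar{\mathfrak{P}}_+\setminus\{0\})$, equivalently that $(P,Q)\mapsto \int_{\mathbb{T}^d} P\log Q\,\dm$ is upper semicontinuous.

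The key tool is the finiteness and semicontinuity behaviour of the Mahler measure already invoked in Section \ref{sec:simicont}: for $Q\in\bar{\mathfrak{P}}_+\setminus\{0\}$ the integral $\int_{\mathbb{T}^d}\log Q\,\dm$ is finite \cite[Lemma 2, p. 223]{schinzel2000polynomials}, and it is upper semicontinuous in $Q$ on $\bar{\mathfrak{P}}_+\setminus\{0\}$ (this is precisely what Lemma \ref{lem:Jcont} is built on, since $\mathbb{J}_P$ there differs from $-\int\log Q\,\dm$ only by continuous terms when $P$ is fixed). To handle a varying $P$ as well, I would fix a convergent sequence $(P_n,Q_n)\to(P_\infty,Q_\infty)$ in the product cone and estimate
\[
\int_{\mathbb{T}^d} P_n\log Q_n\,\dm = \int_{\mathbb{T}^d} P_\infty \log Q_n\,\dm + \int_{\mathbb{T}^d} (P_n-P_\infty)\log Q_n\,\dm .
\]
For the first term, upper semicontinuity in $Q$ — with the fixed weight $P_\infty\ge 0$, which only requires the argument of Lemma \ref{lem:Jcont} run verbatim with $c$ chosen so that $P=P_\infty$ — gives $\limsup_n \int P_\infty\log Q_n\,\dm \le \int P_\infty\log Q_\infty\,\dm$. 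For the second term I would bound $|\log Q_n|$: an upper bound is immediate since $Q_n\le\|Q_n\|_\infty\le|\Lambda|\|q_n\|_\infty$ is bounded along the convergent sequence, while a lower bound on $\int |\log Q_n|\,\dm$ comes from the uniform integrability of $\log Q_n$ — the $L^1(\mathbb{T}^d)$ norms $\|\log Q_n\|_{L^1}$ stay bounded because the $Q_n$ lie in a bounded subset of $\bar{\mathfrak{P}}_+\setminus\{0\}$ bounded away from the zero polynomial, and the Mahler-measure bound of \cite{schinzel2000polynomials} is uniform on such sets. Since $\|P_n-P_\infty\|_\infty\to 0$, the second term tends to $0$. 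Combining, $\limsup_n \mathbb{J}(P_n,Q_n)^{-} \le \cdots$ in the right direction, giving $\liminf_n \mathbb{J}(P_n,Q_n)\ge \mathbb{J}(P_\infty,Q_\infty)$, i.e. lower semicontinuity.

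The main obstacle is the interaction between a degenerate denominator $Q_\infty\in\partial\mathfrak{P}_+$ and a varying numerator $P_n$: one must ensure that no mass of $P_n$ concentrates (in the $\log Q_n$-weighting sense) near the zero set of $Q_\infty$ in a way that survives the limit. This is exactly where the uniformity of the Mahler-measure estimate over bounded subsets of $\bar{\mathfrak{P}}_+\setminus\{0\}$ — rather than the pointwise finiteness alone — does the work, and it is the step I would write out most carefully; everything else is a routine adaptation of Lemma \ref{lem:Jcont}. An alternative, perhaps cleaner, route is to note that $-\int_{\mathbb{T}^d} P\log Q\,\dm$ is, for each fixed $\thetab$, jointly lower semicontinuous in $(P(e^{i\thetab}),Q(e^{i\thetab}))$ as a perspective-type function of the pair, and then apply Fatou's lemma to the sequence of integrands (after the uniform lower bound on $\log Q_n$ above lets us apply Fatou in the form for functions bounded below by an integrable function); this sidesteps the explicit splitting but relies on the same uniform integrability input.
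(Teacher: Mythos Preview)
Your decomposition of $\mathbb{J}(P,Q)$ into the two linear forms, the continuous $\int P\log P\,\dm$ term, and the remaining $-\int P\log Q\,\dm$ term matches the paper's proof exactly. Where you diverge is in handling this last term. The paper simply normalizes by $M=\sup_n\|Q_n\|_\infty$ so that $-P_n\log(Q_n/M)\ge 0$ pointwise and applies standard Fatou directly to these nonnegative integrands; since $P_n\in\bar{\mathfrak{P}}_{+,\circ}$ forces $\int_{\mathbb{T}^d}P_n\,\dm=p_{n,0}=1$, the additive $\log M$ correction cancels identically on both sides and the desired inequality drops out. This is essentially your ``alternative, perhaps cleaner'' route at the end, except that no integrable lower bound on the integrands is needed at all --- nonnegativity after the $M$-normalization already suffices for Fatou.

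Your primary route via the splitting $\int P_n\log Q_n=\int P_\infty\log Q_n+\int(P_n-P_\infty)\log Q_n$ is also valid, but the uniform bound on $\|\log Q_n\|_{L^1}$ it needs amounts to lower semicontinuity (in fact continuity) of the Mahler measure along $Q_n\to Q_\infty\neq 0$. That is true, but it goes beyond the cited Schinzel lemma, which only gives pointwise finiteness, so the step you flagged as needing the most care genuinely does require an extra input. The paper's normalization trick sidesteps this entirely and is the simpler path.
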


\begin{lemma}\label{lm:jCompSublevel}
The sublevel sets $\mathbb{J}^{-1}(-\infty, r]$ are compact.
\end{lemma}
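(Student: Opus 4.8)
The plan is to show that the sublevel set $S_r := \mathbb{J}^{-1}(-\infty, r]$ is closed and bounded in the finite-dimensional space $\bar{\mathfrak{P}}_{+,\circ} \times (\bar{\mathfrak{P}}_+ \setminus \{0\})$, hence compact. Closedness is immediate from Lemma~\ref{lem:JcontCeps}, since a lower semicontinuous function has closed sublevel sets (cf.\ \cite[p.~37]{rudin1987real}); the real work is boundedness. Here $\mathbb{J}(P,Q) = \langle c, q \rangle - \langle \gamma, p \rangle + \int_{\mathbb{T}^d} P \log(P/Q)\,dm$, and I need to bound both $\|Q\|_\infty$ (equivalently $\|q\|_\infty$) and $\|P\|_\infty$ on $S_r$.

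First I would handle the $Q$-direction. Since $P \in \bar{\mathfrak{P}}_{+,\circ}$ has $p_0 = 1$, we have $\int_{\mathbb{T}^d} P\,dm = 1$, so $P$ is a fixed-mass nonnegative ``density.'' Splitting $\int_{\mathbb{T}^d} P \log(P/Q)\,dm = \int P\log P\,dm - \int P \log Q\,dm$, and using $Q/\|Q\|_\infty \le 1$ exactly as in Lemma~\ref{lemma:linearAndLogarithmicGrowth}, we get $-\int P\log Q\,dm \ge -\log\|Q\|_\infty$. For the term $\int P\log P\,dm$: by Jensen's inequality applied to the convex function $t \mapsto t\log t$ with the probability measure $dm$, $\int P\log P\,dm \ge (\int P\,dm)\log(\int P\,dm) = 0$. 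The linear term $\langle c,q\rangle \ge \kappa_c\|q\|_\infty \ge \varepsilon\|Q\|_\infty$ as in \eqref{eq:cqApprox2} (using $c \in \mathfrak{C}_+$ and Lemma~\ref{lem:trigPoly}). So far
\begin{equation*}
\mathbb{J}(P,Q) \ge \varepsilon\|Q\|_\infty - \log\|Q\|_\infty - \langle\gamma, p\rangle.
\end{equation*}
This is not yet enough, because $-\langle\gamma,p\rangle$ can be as negative as $-C\|P\|_\infty$, and $\|P\|_\infty$ is not yet controlled; so the $P$- and $Q$-bounds must be obtained together.

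The key extra ingredient for controlling $\|P\|_\infty$ is a sharper lower bound on $\int P\log P\,dm$ in terms of $\|P\|_\infty$. The idea: if $\|P\|_\infty$ is large, then since $P$ is a trigonometric polynomial with spectrum in the fixed finite set $\Lambda$, a Nikolskii/Bernstein-type inequality (or simply equivalence of norms on the finite-dimensional space combined with a lower bound on the measure of the set where $|P| \ge \tfrac12\|P\|_\infty$, which has measure bounded below by a constant depending only on $\Lambda$ and $d$) forces $P$ to be at least of order $\|P\|_\infty$ on a set of fixed positive measure $\delta_\Lambda$. On that set $P\log P \ge c\|P\|_\infty \log\|P\|_\infty$ for $\|P\|_\infty$ large, while on the rest $P\log P \ge -1/e$ pointwise; hence $\int P\log P\,dm \ge c'\|P\|_\infty\log\|P\|_\infty - 1/e$ for $\|P\|_\infty$ large. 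Combining with $|\langle\gamma,p\rangle| \le \|\gamma\|_1\|p\|_\infty \le C_\gamma\|P\|_\infty$ gives
\begin{equation*}
\mathbb{J}(P,Q) \ge \varepsilon\|Q\|_\infty - \log\|Q\|_\infty + c'\|P\|_\infty\log\|P\|_\infty - C_\gamma\|P\|_\infty - 1/e,
\end{equation*}
and now, comparing $\|P\|_\infty\log\|P\|_\infty$ against the linear term $C_\gamma\|P\|_\infty$ and $\varepsilon\|Q\|_\infty$ against $\log\|Q\|_\infty$, the condition $\mathbb{J}(P,Q) \le r$ bounds both $\|P\|_\infty$ and $\|Q\|_\infty$ from above; $\|Q\|_\infty$ is bounded from below away from $0$ by the same log term blowing up, so $S_r$ stays away from $Q \equiv 0$ and is closed in $\bar{\mathfrak{P}}_+$. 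Therefore $S_r$ is compact.

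The main obstacle I anticipate is making the lower bound $\int P\log P\,dm \gtrsim \|P\|_\infty \log\|P\|_\infty$ rigorous and clean: one must produce, uniformly over $P \in \bar{\mathfrak{P}}_{+,\circ}$, a set of measure bounded below by a constant depending only on $\Lambda$ on which $P \gtrsim \|P\|_\infty$. This is where the finite-dimensionality of the polynomial space (spectrum contained in $\Lambda$) is essential — it is exactly a reverse-Hölder / Remez–Nikolskii phenomenon. An alternative, perhaps smoother, route that avoids the sharp constant is to note that $\int P\log P\,dm \ge \|P\|_\infty^{-1}\int P\cdot \|P\|_\infty^{-1}$... actually cleaner: since $t\log t \ge t - 1$, $\int P\log P\,dm \ge \int(P-1)\,dm = 0$, giving only the weak bound above; so to get the strong bound one genuinely needs the measure-lower-bound argument, and I would isolate it as a small lemma. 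Once that lemma is in hand, the rest is the routine ``linear beats log, $t\log t$ beats linear'' comparison already used for Lemma~\ref{lm:compactSublevel}.
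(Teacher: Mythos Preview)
Your argument is correct, but you are working much harder than necessary in the $P$-direction. The crucial observation you overlooked is that $P\in\bar{\mathfrak{P}}_{+,\circ}$ is \emph{already} bounded: by Lemma~\ref{lem:trigPoly}(i), a nonnegative trigonometric polynomial satisfies $|p_\kb|\le p_0$, and here $p_0=1$, so $\|p\|_\infty=1$ and $\|P\|_\infty\le|\Lambda|$. No Remez--Nikolskii or reverse-H\"older lemma is needed. The paper's proof exploits exactly this: it splits $\mathbb{J}=\mathbb{J}_1+\mathbb{J}_2$ with $\mathbb{J}_1(P,Q)=\langle c,q\rangle-\int P\log Q\,dm$ and $\mathbb{J}_2(P)=-\langle\gamma,p\rangle+\int P\log P\,dm$, bounds $\mathbb{J}_2(P)$ below by a fixed constant $\rho$ (using $-\langle\gamma,p\rangle\ge-|\Lambda|\,\|\gamma\|_\infty$ and the elementary fact $t\log t\ge -1/e$), and then applies Lemma~\ref{lemma:linearAndLogarithmicGrowth} with $\int P\,dm=1$ to get $r-\rho\ge\varepsilon\|Q\|_\infty-\log\|Q\|_\infty$, which bounds $\|Q\|_\infty$ above and below by comparing linear and logarithmic growth. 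Closedness is handled, as you do, by lower semicontinuity.

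Your route---controlling $\|P\|_\infty$ through a lower bound $\int P\log P\,dm\gtrsim\|P\|_\infty\log\|P\|_\infty$ via a measure-lower-bound lemma for polynomials with spectrum in $\Lambda$---would succeed, and it is the natural strategy if one has not noticed the a~priori bound on $p$. It has the minor advantage of being robust to normalizations other than $p_0=1$, but at the cost of an auxiliary lemma that the paper avoids entirely. In short: correct, but replace the Remez--Nikolskii step by the one-line observation $\|p\|_\infty=p_0=1$.
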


\begin{lemma}\label{lm:jCepsConv}
The dual problem (D) in Theorem~\ref{theo:cepstral} is convex on the domain $\bar{\mathfrak{P}}^{(n_1,\ldots,n_d)}_{+,\circ} \times \bar{\mathfrak{P}}^{(n_1,\ldots,n_d)}_+$.
\end{lemma}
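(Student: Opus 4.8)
The plan is to peel off the affine part of $\mathbb{J}$ and reduce everything to the joint convexity of a single scalar function. Writing
\[
\mathbb{J}(P,Q) = \big(\langle c,q\rangle - \langle\gamma,p\rangle\big) + \mathcal{I}(P,Q),\qquad \mathcal{I}(P,Q):=\int_{\mathbb{T}^d} P\log\!\Big(\tfrac{P}{Q}\Big)\,\dm ,
\]
the first term is affine in the coefficient vectors $(p,q)$, and the feasible set $\bar{\mathfrak{P}}^{(n_1,\ldots,n_d)}_{+,\circ}\times\bar{\mathfrak{P}}^{(n_1,\ldots,n_d)}_{+}$ is convex (its second factor is a convex cone, and its first factor is the intersection of a convex cone with an affine hyperplane). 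Hence it suffices to show that $(P,Q)\mapsto\mathcal{I}(P,Q)$ is convex. Exactly as in the proof of Lemma~\ref{lem:JcontCeps}, one first records that $\mathcal{I}$ is well defined with values in $\mathbb{R}\cup\{+\infty\}$: the zero set of a nonzero trigonometric polynomial has Lebesgue measure zero, $\int_{\mathbb{T}^d}\log Q\,\dm$ is finite for $Q\in\bar{\mathfrak{P}}_+\setminus\{0\}$ by the Mahler measure bound invoked in Section~\ref{sec:simicont}, $P\log P$ is bounded on $\mathbb{T}^d$, and $\mathcal{I}(P,Q)=+\infty$ precisely when $Q\equiv 0$ (consistent with $\mathbb{J}(P,0)=+\infty$).

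The crux is a pointwise statement. Let $g(s,t)=s\log(s/t)$ on $(0,\infty)^2$, extended to $[0,\infty)^2$ by $g(0,t)=0$ for $t\ge 0$ and $g(s,0)=+\infty$ for $s>0$ (matching the paper's convention that $P\log(P/Q)$ is set to $0$ where $P=0$). The claim is that $g$ is jointly convex and lower semicontinuous on $[0,\infty)^2$. One tempting route — writing $g(s,t)=s\log s-s\log t$ and treating the two summands separately — fails, since $-s\log t$ has the indefinite Hessian $\left(\begin{smallmatrix}0 & -1/t\\ -1/t & s/t^2\end{smallmatrix}\right)$ and is not jointly convex; the relative-entropy (perspective) structure must be kept intact. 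I would argue convexity either by (i) recognizing $g$ as the perspective $t\,h(s/t)$ of the convex function $h(s)=s\log s$, hence jointly convex on the open quadrant, or (ii) a direct computation on $(0,\infty)^2$ of
\[
\nabla^2 g(s,t)=\begin{pmatrix} 1/s & -1/t\\[2pt] -1/t & s/t^2\end{pmatrix},
\]
whose determinant is $0$ and whose trace $1/s+s/t^2$ is positive, so $\nabla^2 g\succeq 0$. In either case the lower-semicontinuous extension of a convex function is convex, which covers the boundary of the quadrant.

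Finally I would pull this back through linear maps and integrate. For each fixed $\thetab\in\mathbb{T}^d$ the evaluation map $(p,q)\mapsto\big(P(e^{i\thetab}),Q(e^{i\thetab})\big)$ is $\mathbb{R}$-linear and sends the feasible set into $[0,\infty)^2$, so the integrand $(p,q)\mapsto g\big(P(e^{i\thetab}),Q(e^{i\thetab})\big)$ is a convex function of $(p,q)$ for every $\thetab$; integrating this family against the nonnegative measure $\dm$ then yields convexity of $\mathcal{I}$, hence of $\mathbb{J}$, on the stated domain. The step I expect to require the most care — the main obstacle — is the passage through the boundary of the domain: where $P$ or $Q$ vanishes on a measure-zero set, and especially where $Q\equiv 0$, one must verify that the pointwise convexity inequality is still inherited by the integral under the $+\infty$ convention. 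This is handled by applying the inequality to the regularizations $g(s,t+\varepsilon)$, which are finite and jointly convex, and letting $\varepsilon\downarrow 0$ via the monotone convergence theorem, using the integrability facts recalled in the first paragraph.
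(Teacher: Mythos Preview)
Your proof is correct. The paper's argument is the same computation in a different wrapper: rather than establishing pointwise joint convexity of $g(s,t)=s\log(s/t)$ and then integrating, it computes the second directional derivative of $\mathbb{J}$ directly and obtains
\[
\delta^2\mathbb{J}(P,Q;\delta P,\delta Q)=\int_{\mathbb{T}^d} P\Big(\frac{\delta P}{P}-\frac{\delta Q}{Q}\Big)^{2}\,\dm\ \geq\ 0,
\]
which is exactly the quadratic form associated with your Hessian $\nabla^2 g$. Your perspective-function framing is a bit more structural and treats the boundary of the domain (where $P$ or $Q$ vanishes on a null set, or $Q\equiv 0$) more carefully than the paper's bare directional-derivative calculation, which tacitly works on the interior; conversely, the paper's version is shorter and yields the explicit second-derivative formula that is reused verbatim in the proof of Theorem~\ref{theo:cepstralReg} to establish strict convexity of the regularized functional.
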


Next we show that if $\hat{Q} \in \mathfrak{P}_{+}$ and $\hat{P} \in \mathfrak{P}_{+, \circ}$ then
$\hat\Phi = \hat{P}/\hat{Q}$ is also optimal for the primal problem of Theorem~\ref{theo:cepstral}. 
This  follows by observing that $\hat\Phi$ is a  primal feasible point and that the primal functional \eqref{cevcepstrprimal} takes the same values as the Lagrangian \eqref{eq:LagrangianCeps} in this point, since we have covariance and cepstral matching  (cf.\ the proof of Theorem \ref{theo:conjecture}). Finally, if  $d\leq 2$ then $\hat{Q} \in \mathfrak{P}_{+}$ whenever $\hat{P} \in \mathfrak{P}_{+, \circ}$, which follows directly from Lemma \ref{lem:noSolOnBoundary}.
This concludes the proof of Theorem~\ref{theo:cepstral}.
\end{proofWithName}

From this proof we see that the stationarity of $\mathbb{J}(P,Q)$ in $Q$ ensures covariance matching and the stationarity in $P$ provides cepstral matching. Therefore we can only guarantee matching for a solution in the interior $\mathfrak{P}_{+,\circ} \times \mathfrak{P}_+$. This subtle fact was overlooked in \cite{byrnes2002identifyability, enqvist2004aconvex}, where it is claimed that we also have covariance matching for $\hat{P} \in \partial \mathfrak{P}_{+,\circ}$. However, even when $d\leq 2$,  we cannot guarantee that there is a solution $\hat{Q}$ belonging  to the interior $\mathfrak{P}_+$ if $\hat{P} \in \partial \mathfrak{P}_{+,\circ}$. The following example illustrates this.

\begin{example}
Consider the one-dimensional problem with $c_0 = 2$, $c_{-1} = c_{1} = 1$ and $\gamma_1 = -1$. Set 
\begin{displaymath}
P(e^{i\theta}) = 1 - (e^{i\theta} + e^{-i\theta})/2 = 1 - \cos \theta, 
\end{displaymath}
and $Q = P$. Clearly $P$ and $Q$ belong to the boundary, since $P(e^{i0})=Q(e^{i0})=0$. Moreover $\Phi = P/Q = 1$, so there is neither covariance matching nor cepstral matching.  A simple calculation shows that 
$\partial \mathbb{J}/\partial q_0=\partial \mathbb{J}/\partial  q_1=\partial \mathbb{J}/\partial p_1=1$. 
However, for any feasible direction $(\delta q_0,\delta q_1,\delta p_1 )$ in $(P,Q)$ we have $\text{\rm Re}\{\delta p_1\}\ge 0$ and $\text{\rm Re}\{\delta q_0+2\delta q_1\}\ge 0$, and hence there is no feasible descent direction from this point. 
Therefore we have a local minimum, which, by convexity,  is also a global minimum.  Consequently, we have an optimal solution on the boundary where we have neither covariance nor cepstral matching.
\end{example}

\begin{remark}
From Theorem \ref{theo:conjecture} we know that it is possible to achieve covariance matching in this example by adding  a nonnegative singular measure $d\hat{\mu}$, representing spectral lines.  In fact, a similar statement can be proved for cepstral matching, namely that that there exists a nonpositive measure $d \tilde{\mu}$ such that $\supp (d \tilde{\mu}) \subseteq \{ \thetab \in \mathbb{T}^d \mid \hat{P}(\thetab) = 0 \}$ and
\begin{equation*}
\gamma_\kb = \int_{\mathbb{T}^d} e^{i (\kb,\thetab)} \left( \log (\hat{P}/\hat{Q}) \dm(\thetab)  - d\tilde{\mu}(\thetab) \right)
\end{equation*}
for all $\kb\in\Lambda \setminus \{0\}$. However, while the physical interpretation of $d \hat{\mu}$ in Theorem \ref{theo:conjecture} is clear, in this case it is not obvious what $d\tilde{\mu}$ represents in terms of the spectrum.
\end{remark}

Note that the optimization problem is convex but in general not strictly convex, and hence the solution might not be unique. This is illustrated in  the following example \cite[p. 504]{LindquistPicci2015}.

\begin{example}
Again consider a one-dimensional problem, this time with $c_0 = 1$, $c_{-1} = c_{1} = 0$ and $\gamma_1 = 0$. Choosing 
\begin{displaymath}
P(e^{i\theta}) = Q(e^{i\theta}) = 1 - \rho \cos\theta, \quad |\rho| \leq 1,
\end{displaymath}
we obtain  $\Phi = 1$, which matches the given  covariances and cepstral coefficients. Therefore all $P$ and $Q$ of this form are stationary points of $\mathbb{J}$  and are thus optimal for the dual problem in Theorem~\ref{theo:cepstral}.
\end{example}

In one dimension there is strict convexity, and thus a unique solution, if and only if there is an optimal solution for which $\hat{P}$ and $\hat{Q}$ are co-prime \cite{byrnes2002identifyability}.

\subsection{Regularizing the problem}\label{subsec:reg}
A motivation for simultaneous covariance and cepstral matching is to obtain a rational spectrum $\Phi = P/Q$ that matches the covariances without having to provide a prior $P$. However, even if $d\leq 2$, the dual problem in Theorem~\ref{theo:cepstral} cannot be guaranteed to produce such a spectrum that satisfies the covariance constraints \eqref{eq:Cov}. To remedy this we consider the regularization  proposed by Enqvist \cite{enqvist2004aconvex}, which has the objective function
\begin{displaymath}
\mathbb{J}_{\lambda}(P,Q) =\mathbb{J}(P,Q) - \lambda \int_{\mathbb{T}^d} \log P\, \dm ,
\end{displaymath}
where $\lambda \in (0, \infty)$ is the regularization parameter. 

The partial derivative with respect to $\bar{q}_\kb$ is identical to \eqref{dJ/dq}, whereas the partial derivative with respect to $\bar{p}_\kb$ becomes  
\begin{equation*}
\frac{\partial \mathbb{J}_\lambda(P,Q)}{\partial \bar{p}_\kb}  = \int_{\mathbb{T}^d} e^{i (\kb,\thetab)}\left( \log \left( \frac{P}{Q} \right) -\frac{\lambda}{P}\right)\dm - \gamma_\kb.
\end{equation*}
By Assumption \ref{ass:divergingIntegral}, this gradient  will be infinite for $P \in \partial \mathfrak{P}_+$, and hence the optimal solution is not on the boundary.  Moreover, with this regularization, the optimization problem becomes strictly convex and we thus have a unique solution.

\begin{theorem}\label{theo:cepstralReg}
Suppose that $d\leq 2$, and let $\gamma_\kb$,  $\kb \in \Lambda\setminus\{0\}$, be any sequence of complex numbers such that $\gamma_{-\kb} = \bar{\gamma}_\kb$. Set  $\gamma=\{\gamma_\kb\}_{\kb \in \Lambda}$ where $\gamma_0=1$, and let $c \in \mathfrak{C}_+$. Then for any $\lambda > 0$ there exists a unique solution $(\hat{P}, \hat{Q})$ to the strictly convex optimization problem to minimize 
\begin{equation*}
\mathbb{J}_{\lambda}(P,Q) = \langle c, q \rangle - \langle \gamma, p \rangle + \int_{\mathbb{T}^d} P \log \left(\frac{P}{Q}\right) \dm - \lambda \int_{\mathbb{T}^d} \log P\, \dm
\end{equation*}
subject to $P \in \mathfrak{P}_{+,\circ}$ and $Q \in \mathfrak{P}_+$. Moreover, $\Phi = \hat{P}/\hat{Q}$  fulfills the covariance matching \eqref{eq:Cov} and approximately fulfills the cepstral matching \eqref{eq:cepstrum} via
\begin{equation*}
\gamma_\kb + \varepsilon_\kb = \int_{\mathbb{T}^d} e^{i (\kb,\thetab)} \log \Phi \,\dm,\quad\text{where $\varepsilon_\kb = \lambda \int_{\mathbb{T}^d} e^{i(\kb, \thetab)} \hat{P}^{-1} \dm$}.
\end{equation*}
\end{theorem}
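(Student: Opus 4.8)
For Theorem~\ref{theo:cepstralReg}, the plan is to follow the architecture of the proof of Theorem~\ref{theo:cepstral}, the one genuinely new ingredient being the boundary‑repulsion argument of Lemma~\ref{lem:noSolOnBoundary}, applied now in the $P$‑variable. Write $\mathbb{J}_\lambda=\mathbb{J}-\lambda\int_{\mathbb{T}^d}\log P\,\dm$. Since $\log P$ is integrable for every $P\in\bar{\mathfrak{P}}_+\setminus\{0\}$ (finiteness of the Mahler measure, cf.\ Section~\ref{sec:simicont}), $\mathbb{J}_\lambda$ is real‑valued on $\bar{\mathfrak{P}}_{+,\circ}\times(\bar{\mathfrak{P}}_+\setminus\{0\})$. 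I would first dispatch \emph{uniqueness} via strict convexity: for two distinct feasible pairs, either the $P$‑components differ, and then $-\lambda\int\log P\,\dm$ is strictly convex because $-\log$ is strictly convex and distinct trigonometric polynomials differ on a set of positive measure; or the $P$‑components agree and the $Q$‑components differ, and then the computation in the proof of Theorem~\ref{theo:conjecture} shows $\mathbb{J}$ is strictly convex in $Q$ for fixed $P$ (as $P>0$ a.e.). So $\mathbb{J}_\lambda$ is strictly convex on $\bar{\mathfrak{P}}_{+,\circ}\times(\bar{\mathfrak{P}}_+\setminus\{0\})$.

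Next, for \emph{existence} I would extend $\mathbb{J}_\lambda$ to this closure and verify that the proofs of Lemmas~\ref{lem:JcontCeps} and \ref{lm:jCompSublevel} carry over. The added term $-\lambda\int\log P\,\dm$ is lower semicontinuous (reverse Fatou, using $\log P\le\log\|P\|_\infty$ on bounded sets), and on $\bar{\mathfrak{P}}_{+,\circ}$ one has $|p_\kb|\le p_0=1$ and hence $\|P\|_\infty\le|\Lambda|$, so this term is uniformly bounded below there and does not affect compactness of the sublevel sets. Thus $\mathbb{J}_\lambda$ is lower semicontinuous with compact sublevel sets on $\bar{\mathfrak{P}}_{+,\circ}\times(\bar{\mathfrak{P}}_+\setminus\{0\})$, so it attains its minimum at some $(\hat P,\hat Q)$ there; by strict convexity this minimizer is unique.

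The crux is to show $(\hat P,\hat Q)$ lies in the interior $\mathfrak{P}_{+,\circ}\times\mathfrak{P}_+$. Suppose $\hat P\in\partial\mathfrak{P}_{+,\circ}$, i.e.\ $\hat P\in\partial\mathfrak{P}_+$ with $\hat p_0=1$. I would move toward the interior point $1\in\mathfrak{P}_{+,\circ}$ along $P_t:=(1-t)\hat P+t\cdot1$ (keeping $Q=\hat Q$) and examine the one‑sided directional derivative of $\mathbb{J}_\lambda$. The linear terms contribute finitely. The relative‑entropy term $\int P\log(P/\hat Q)\,\dm$ also contributes finitely: by the convexity/subgradient inequality $x\log(x/q)-\hat x\log(\hat x/q)\ge(\log\hat x-\log q+1)(x-\hat x)$ (trivially true when $\hat x=0$), its increment is bounded below by $t\int(\log\hat P-\log\hat Q+1)(1-\hat P)\,\dm$, which is finite because $\log\hat P$ and $\log\hat Q$ are integrable, so the difference quotient is bounded below and its limit is finite. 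The regularization term $-\lambda\int\log P\,\dm$, however, contributes $-\infty$: writing its increment as $-\lambda\int\log\bigl(1+t(1-\hat P)/\hat P\bigr)\,\dm$, dividing by $t$, and using that $t^{-1}\log(1+tu)$ increases to $u$ as $t\downarrow0$, monotone convergence gives $-\lambda\int(1-\hat P)/\hat P\,\dm=-\lambda\bigl(\int 1/\hat P\,\dm-1\bigr)=-\infty$ by Assumption~\ref{ass:divergingIntegral} (valid for $d\le2$ and applicable since $\hat P\in\partial\mathfrak{P}_+$). Hence the one‑sided directional derivative is $-\infty$, and since $\mathbb{J}_\lambda$ is convex the difference quotient along the segment is eventually negative, so $\mathbb{J}_\lambda(P_t,\hat Q)<\mathbb{J}_\lambda(\hat P,\hat Q)$ for some small $t>0$, contradicting optimality; thus $\hat P\in\mathfrak{P}_{+,\circ}$. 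With $\hat P$ now strictly positive, $Q\mapsto\mathbb{J}_\lambda(\hat P,Q)$ equals $\mathbb{J}_{\hat P}(Q)$ up to a constant, so $\hat Q$ is the unique minimizer of $\mathbb{J}_{\hat P}$ over $\bar{\mathfrak{P}}_+\setminus\{0\}$ (Theorem~\ref{theo:conjecture}); since $\hat P\in\mathfrak{P}_+$ and Assumption~\ref{ass:divergingIntegral} holds, Lemma~\ref{lem:noSolOnBoundary} yields $\hat Q\in\mathfrak{P}_+$.

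Finally, on the open set $\mathfrak{P}_{+,\circ}\times\mathfrak{P}_+$ the functional $\mathbb{J}_\lambda$ is smooth and convex, so its minimizer is a stationary point. Setting $\partial\mathbb{J}_\lambda/\partial\bar q_\kb$ (which coincides with \eqref{dJ/dq}) to zero for all $\kb\in\Lambda$ gives the covariance matching $c_\kb=\int_{\mathbb{T}^d}e^{i(\kb,\thetab)}(\hat P/\hat Q)\,\dm$, i.e.\ \eqref{eq:Cov} for $d\mu=\hat\Phi\,dm$; setting $\partial\mathbb{J}_\lambda/\partial\bar p_\kb=\int_{\mathbb{T}^d}e^{i(\kb,\thetab)}\bigl(\log(\hat P/\hat Q)-\lambda/\hat P\bigr)\,\dm-\gamma_\kb$ to zero for $\kb\in\Lambda\setminus\{0\}$ gives $\gamma_\kb+\varepsilon_\kb=\int_{\mathbb{T}^d}e^{i(\kb,\thetab)}\log\hat\Phi\,\dm$ with $\varepsilon_\kb=\lambda\int_{\mathbb{T}^d}e^{i(\kb,\thetab)}\hat P^{-1}\,\dm$, which is well defined since $\hat P$ is bounded away from zero. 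The main obstacle is the interior‑point step for $P$: one must check carefully that the relative‑entropy and linear parts of $\mathbb{J}_\lambda$ have finite directional derivative along the segment into the interior — the convexity tangent inequality together with the finite Mahler measure being the right tools — while Assumption~\ref{ass:divergingIntegral} forces the regularization part to decrease with infinite slope. Everything else is a routine transcription of arguments already established for Theorems~\ref{theo:conjecture} and \ref{theo:cepstral}.
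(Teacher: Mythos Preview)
Your proposal is correct and follows essentially the same architecture as the paper: existence via lower semicontinuity and compact sublevel sets inherited from Theorem~\ref{theo:cepstral}, boundary repulsion for $P$ via Assumption~\ref{ass:divergingIntegral}, then Lemma~\ref{lem:noSolOnBoundary} for $Q$, and finally stationarity for the matching conditions.

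The one genuine difference is in how strict convexity is established. The paper computes the second directional derivative explicitly,
\[
\delta^2\mathbb{J}_\lambda(P,Q;\delta P,\delta Q)=\int_{\mathbb{T}^d} P\Bigl(\tfrac{\delta P}{P}-\tfrac{\delta Q}{Q}\Bigr)^2\dm+\lambda\int_{\mathbb{T}^d}\tfrac{(\delta P)^2}{P^2}\,\dm,
\]
and observes that both integrands are nonnegative, so vanishing forces first $\delta P\equiv 0$ (from the second term) and then $\delta Q\equiv 0$ (from the first). This is a single clean computation that handles the $P$–$Q$ coupling directly. Your case-split argument (either the $P$-components differ, and then $-\lambda\int\log P\,\dm$ is strictly convex; or they agree, and then strict convexity in $Q$ kicks in) is also valid and arguably more elementary, avoiding second-order calculus entirely. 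Conversely, your boundary-repulsion step for $P$ is considerably more careful than the paper's, which simply asserts that the gradient of the regularization term is infinite on $\partial\mathfrak{P}_+$ and leaves implicit that the remaining terms do not compete; your verification that the relative-entropy part contributes a finite one-sided derivative (via the convexity tangent inequality and finiteness of the Mahler measure) fills a gap the paper glosses over.
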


\begin{proof}
In view of what has been said, all of the results follow from Theorem \ref{theo:cepstral}  except the strict convexity. To prove this, we note that the second directional derivative of $\mathbb{J}_\lambda$ is given by 
\begin{equation*}
\delta^2 \mathbb{J}_\lambda (P, Q; \delta P, \delta Q) = \int_{\mathbb{T}^d}\!\! P \left( \delta P \frac{1}{P} -\delta Q \frac{1}{Q} \right)^2\dm + \int_{\mathbb{T}^d}\!\! \delta P^2 \frac{\lambda}{P^2} \dm
\end{equation*}
(cf. the proof of Lemma \ref{lm:jCepsConv} in the appendix). 
Since both integrands are nonnegative, they both need to be zero almost everywhere in order for the derivative to vanish. However, since $P > 0$, this implies that  $\delta P \equiv 0$ by continuity. Then the first integrand becomes $\delta Q^2 P/Q^2$ and in the same way we must thus have $\delta Q \equiv 0$.  Hence $\delta^2 \mathbb{J}_\lambda (P, Q; \delta P, \delta Q)>0$, implying uniqueness. 
\end{proof}

\section{The circulant problem}\label{sec:periodic}

Theorem \ref{theo:conjectureDisc} in Section \ref{subsec:periodProb} can be viewed as a periodic version of Theorem \ref{theo:conjecture} and Corollary \ref{theo:langMcclellan}, as can be seen by following the lines of  \cite{lindquist2013thecirculant}, where the one-dimensional problem was first introduced. To this end, we introduce the discrete measure $\derivd \nu_\Nb$, i.e.,
\begin{equation}
\label{eq:discreteMeasure}
\derivd \nu_\Nb (\thetab) =\sum_{\lb \in \mathbb{Z}^d_\Nb} \delta\big(\theta_1 -\phi_1(\ell_1), \ldots , \theta_d -\phi_d(\ell_d)\big) \prod_{j=1}^d\frac{\derivd \theta_j}{N_j} ,
\end{equation}
where $\phi_j(\ell):=2\pi\ell/N_j$ and $\delta$ is the multidimensional Dirac-delta function.  Then the moment matching condition \eqref{discretecov} takes the form
\begin{equation*}
c_\kb =  \frac{1}{\prod_{j=1}^d N_j} \sum_{\lb \in \mathbb{Z}_\Nb^d} \zetab_\lb^\kb \Phi(\zetab_\lb) = \int_{\mathbb{T}^d} e^{i (\kb,\thetab)} \Phi(e^{i\thetab}) d\nu_\Nb,
\end{equation*}
which is similar to \eqref{eq:Cov}, but where $d \nu_\Nb$ and $\dm$ have different mass distributions (discrete versus continuous). In fact, the main difference in the statements of Theorem \ref{theo:conjectureDisc}  and Theorem \ref{theo:conjecture} together with Corollary \ref{theo:langMcclellan}  is that different measures and cones are used. In the same way, versions of Theorems \ref{theo:cepstral} and \ref{theo:cepstralReg} also hold in the circulant case; see \cite{ringh2015themultidimensional} for details.

In connection to this it is also interesting to observe that the discrete counterpart of Assumption~\ref{ass:divergingIntegral},
\begin{equation}
\label{discreteass}
\int_{\mathbb{T}^d} \frac{1}{Q}d\nu_\Nb  = \infty \quad \text{for all  $Q \in \partial \mathfrak{P}_+(\Nb)$},
\end{equation}
 holds for any measure $d \nu_\Nb$ with discrete mass distribution 
(see also \cite{lang1982multidimensional}). However, if $P \in \partial \mathfrak{P}_+(\Nb)$ we may still obtain solutions without covariance matching, 
 because for any $Q$ that is zero only in a subset of points where $P$ is zero we will have $\int_{\mathbb{T}^d} (P/Q)\derivd \nu_\Nb < \infty$ and hence the optimal solution may occur on the boundary.

\begin{remark}\;
 Although the measure \eqref{eq:discreteMeasure} has mass in points placed in the roots of unity on the d-dimensional torus, one could also consider other mass distributions. One could place the mass points in the odd points of the roots of unity, i.e., in the points $\{e^{i (2k_j - 1) \pi/N_\ell}\}_{k_j=1}^{N_\ell}$, a situation which has been studied in the one-dimensional case and which correspond to spectra of skew-periodic processes \cite{ringh2014spectral}. The same holds in the multidimensional setting. Also note that all dimensions does not need to have mass distributions of the same type.
For example, the approach in this paper works even if the process is periodic in some of the dimensions, while non-periodic in others.
\end{remark}


\subsection{Convergence of discrete to continuous}\label{sec:convAnalys}
In \cite{lindquist2013thecirculant} Lindquist and Picci proved for the one-dimensional case that  when the number of mass points in the discrete measure $d\nu_\Nb$ in \eqref{eq:discreteMeasure} goes to infinity, the solution converges to the solution of the problem with the continuous measure $\dm$. The same is true in higher dimensions, and the formal result is given in Theorem \ref{theo:convergence} in Section \ref{subsec:periodProb}. In this subsection we will prove this statement. Note that  we use the notation
\begin{subequations}
\begin{align}
\mathbb{J}_P (Q) = \langle c, q \rangle - \int_{\mathbb{T}^d} P \log Q\, \dm \label{eq:JcontConv} \\
\mathbb{J}_P^{\Nb} (Q) = \langle c, q \rangle - \int_{\mathbb{T}^d} P \log Q \, d\nu_{\Nb} \label{eq:JdiscConv}
\end{align}
\end{subequations}
to explicitly distinguish the objective functions using the continuous and the discrete measure.
Moreover let $\hat{Q}$ be the minimizer of \eqref{eq:JcontConv}, subject to $Q \in \bar{\mathfrak{P}}_+$, and $\hat{Q}_\Nb$ be a minimizer of \eqref{eq:JdiscConv}, subject to $Q \in \bar{\mathfrak{P}}_+(\Nb)$. Before proving the theorem, we make some clarifying observations.


\begin{remark}
We have already noted that the singular measure $d\hat{\mu}$ is not unique. However, the corresponding ``rest covariance'' $\hat{c}$, which $d\hat{\mu}$ matches, is unique (cf. equation \eqref{eq:cHat}). In connection to this it is interesting to note that although this is the case, and although $\hat{Q}_\Nb \rightarrow \hat{Q}$, in general $\hat{c}_\Nb \not \rightarrow \hat{c}$. To see this, note that for a $P$ which is positive in all points except for some irrational frequency%
\footnote{
An irrational frequency is an angle $\lambda\pi$ for which $\lambda$ is an irrational number.
} 
where $P= 0$, we will have $P \in \mathfrak{P}_+(\Nb)$ for all $\Nb$, since this point will never belong to the grid. Thus we will have $\hat{Q}_\Nb \in \mathfrak{P}_+(\Nb)$ and therefore $\hat{c}_N = 0$. However $P \in \partial \mathfrak{P}_+$, and therefore we can have $\hat{Q} \in \partial \mathfrak{P}_+$ and hence $\hat{c} \not = 0$. One can construct such example based on Example~\ref{ex:1Dexamplesingular} by shifting the spectral line to an irrational frequency point.  
\end{remark}

\begin{remark}
In connection to the previous remark, we note that in two dimensions we have $\hat{Q} \in \mathfrak{P}_+$ whenever $P \not \in \partial\mathfrak{P}_+$, since Assumption~\ref{ass:divergingIntegral} is valid for $d=2$. Hence there will be no singular measure. Moreover,  since $\hat Q_\Nb\to\hat Q$ as $\min(\Nb)$ goes to infinity, for large enough value of $\min(\Nb)$ we must have $\hat{Q}_\Nb > 0$, i.e., $\hat{Q}_\Nb \in \mathfrak{P}_+$. Therefore $(P/\hat{Q}_\Nb) d \nu_\Nb$ tends to $(P/\hat{Q}) \dm$ in weak$^*$.
\end{remark}

The first thing we need to show is that $\hat{Q}_\Nb$ is in fact well-defined. That this is not evident from the statement of the theorem becomes apparent when noting the following relationship among the cones of trigonometric polynomials:
\begin{equation*}
\bar{\mathfrak{P}}_+(\Nb) \supset \bar{\mathfrak{P}}_+(2\Nb) \supset \ldots \supset \bar{\mathfrak{P}}_+.
\end{equation*}
For the dual cones we therefore have \cite[pp. 157-158]{luenberger1969optimization}
\begin{equation*}
\bar{\mathfrak{C}}_+(\Nb) \subset \bar{\mathfrak{C}}_+(2\Nb) \subset \ldots \subset \bar{\mathfrak{C}}_+,
\end{equation*}
and thus it is not guaranteed that minimizing \eqref{eq:JdiscConv} over $Q \in \bar{\mathfrak{P}}_+(\Nb)$ has a solution for $c \in \mathfrak{C}_+$.
However note that when $N_l \rightarrow \infty$ the corresponding set $\{ e^{ik_l 2\pi/N_l} \}_{k_l \in \mathbb{Z}_{N_l}}$ will become dense on the unit circle. Therefore $\bar{\mathfrak{P}}_+ = \bigcap_{\Nb \in \mZ^d_+} \bar{\mathfrak{P}}_+(\Nb)$. Using this we have the following  
lemma, proved in the appendix,  which is a 
generalization to the multivariable case of Proposition~6 in \cite{lindquist2013thecirculant}.

\begin{lemma}\label{lm:limN}
For any $c \in \mathfrak{C}_+$ there exist an $N_0$ such that $c \in \mathfrak{C}_+(\Nb)$ for all $\min(\Nb) \geq N_0$.
\end{lemma}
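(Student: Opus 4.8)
\textbf{Proof proposal for Lemma~\ref{lm:limN}.}
The plan is to argue by contradiction using a compactness argument on the sphere of normalized coefficient vectors. Recall that $c \in \mathfrak{C}_+$ means $\langle c, p\rangle > 0$ for all $P \in \bar{\mathfrak{P}}_+ \setminus \{0\}$, and $c \in \mathfrak{C}_+(\Nb)$ means $\langle c, p \rangle > 0$ for all $P \in \bar{\mathfrak{P}}_+(\Nb) \setminus \{0\}$. Since $\bar{\mathfrak{P}}_+(\Nb) \supset \bar{\mathfrak{P}}_+$, the condition $c \in \mathfrak{C}_+(\Nb)$ is \emph{stronger}, so the statement is that the (strictly larger) positivity requirement eventually holds once the grid is fine enough. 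First I would set up the contradiction: suppose no such $N_0$ exists. Then there is a sequence $\Nb^{(j)}$ with $\min(\Nb^{(j)}) \to \infty$ and, for each $j$, a polynomial $P^{(j)} \in \bar{\mathfrak{P}}_+(\Nb^{(j)}) \setminus \{0\}$ with $\langle c, p^{(j)}\rangle \leq 0$. Normalizing, we may assume $\|p^{(j)}\|_\infty = 1$ (using the notation $\|p\|_\infty = \max_{\kb\in\Lambda}|p_\kb|$ as in Lemma~\ref{lemma:linearAndLogarithmicGrowth}), so the coefficient vectors live in a compact set in $\mathbb{C}^{|\Lambda|}$.

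Next I would extract a convergent subsequence $p^{(j)} \to p^\ast$ with $\|p^\ast\|_\infty = 1$, so $P^\ast \not\equiv 0$. The key point is to show $P^\ast \in \bar{\mathfrak{P}}_+$, i.e., $P^\ast(e^{i\thetab}) \geq 0$ for all $\thetab \in \mathbb{T}^d$. Fix any $\thetab_0 \in \mathbb{T}^d$. Since $\min(\Nb^{(j)}) \to \infty$, the grids $\{\zetab_\lb : \lb \in \mZ^d_{\Nb^{(j)}}\}$ become dense in $\mathbb{T}^d$; hence for each $j$ we may pick a grid point $\zetab_{\lb^{(j)}}$ with $\zetab_{\lb^{(j)}} \to e^{i\thetab_0}$. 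Because $P^{(j)} \in \bar{\mathfrak{P}}_+(\Nb^{(j)})$, we have $P^{(j)}(\zetab_{\lb^{(j)}}) \geq 0$. The trigonometric polynomials $P^{(j)}$ have coefficients converging to those of $P^\ast$ and are supported on the fixed index set $\Lambda$, so $P^{(j)}(\zetab_{\lb^{(j)}}) \to P^\ast(e^{i\thetab_0})$ (uniform convergence of the $P^{(j)}$ plus convergence of the evaluation points, using that each $e^{i(\kb,\cdot)}$ is uniformly continuous). Therefore $P^\ast(e^{i\thetab_0}) \geq 0$, and since $\thetab_0$ was arbitrary, $P^\ast \in \bar{\mathfrak{P}}_+ \setminus \{0\}$.

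Finally, I would pass to the limit in the defect inequality: $\langle c, p^{(j)}\rangle \leq 0$ for all $j$, and $\langle c, \cdot\rangle$ is continuous (linear on finite-dimensional space), so $\langle c, p^\ast \rangle \leq 0$. But $P^\ast \in \bar{\mathfrak{P}}_+ \setminus \{0\}$ and $c \in \mathfrak{C}_+$ force $\langle c, p^\ast\rangle > 0$, a contradiction. Hence the desired $N_0$ exists. (One should also note that $\mathfrak{C}_+(\Nb)$ is monotone in $\Nb$ along, say, integer multiples, so it suffices to produce a single $N_0$ with $c \in \mathfrak{C}_+(\Nb)$ for all $\Nb$ with $\min(\Nb) \geq N_0$; the argument above already yields this directly since the contradiction sequence ranges over all $\Nb^{(j)}$ with growing minimum.) The main obstacle, though it is mild, is the bookkeeping in the density/continuity step: one must be careful that the convergence $P^{(j)}(\zetab_{\lb^{(j)}}) \to P^\ast(e^{i\thetab_0})$ genuinely uses \emph{uniform} control of the $P^{(j)}$ (which comes for free from the normalization $\|p^{(j)}\|_\infty = 1$ and $|\Lambda|$ being fixed) together with the equicontinuity of the fixed finite family of exponentials $\{e^{i(\kb,\thetab)}\}_{\kb\in\Lambda}$.
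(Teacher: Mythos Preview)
Your proof is correct. It differs from the paper's argument in a genuine way, so let me compare.

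The paper gives a direct, quantitative argument: for $\hat P\in\bar{\mathfrak P}_+(\Nb)$ with $\|\hat p\|_\infty=1$, it bounds the gradient $|\partial_{\theta_j}\hat P|\le \sum_{\kb\in\Lambda}|k_j|$, so nonnegativity on the grid forces $\hat P(e^{i\thetab})\ge -\pi\Delta/\min(\Nb)$ everywhere, with $\Delta=\sum_{\kb\in\Lambda}\|\kb\|_1$. Hence $\hat P+\pi\Delta/\min(\Nb)\in\bar{\mathfrak P}_+$, and applying the coercivity bound $\langle c,p\rangle\ge\kappa_c\|p\|_\infty$ (from Lemma~\ref{lemma:linearAndLogarithmicGrowth}) yields $\langle c,\hat p\rangle>0$ as soon as $\min(\Nb)>\pi\Delta(1+c_0/\kappa_c)$. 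This produces an \emph{explicit} $N_0$ in terms of $\Lambda$, $c_0$, and $\kappa_c$.

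Your compactness/contradiction route is softer: you normalize, extract a limit $P^\ast$, and use density of the grids plus uniform convergence of the $P^{(j)}$ to conclude $P^\ast\in\bar{\mathfrak P}_+\setminus\{0\}$, whence $\langle c,p^\ast\rangle>0$ contradicts the limit of $\langle c,p^{(j)}\rangle\le 0$. This is entirely valid and arguably cleaner, but it is nonconstructive---it gives no handle on how large $N_0$ must be. Amusingly, the paper itself uses essentially your compactness argument in the proof of the very next lemma (Lemma~\ref{lem:seqBounded}) to show $\inf_{\Nb}\kappa_c^{\Nb}>0$, so both techniques are in play in the paper. Your parenthetical remark about monotonicity of $\mathfrak C_+(\Nb)$ is unnecessary: the contradiction argument already covers all $\Nb$ with $\min(\Nb)\ge N_0$ directly, as you note.
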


This shows that for each $c \in \CP$, the problem of minimizing \eqref{eq:JdiscConv} over $Q \in \bar{\mathfrak{P}}_+(\Nb)$ does in fact have a solution for large enough values of $\Nb$. Interestingly, the lemma is equivalent to $\lim_{\min(\Nb) \to \infty} \CP(\Nb)=\CP$.

\begin{proofWithName}{Proof of Theorem \ref{theo:convergence}}
Let $\hat{Q}$ and $\hat{Q}_\Nb$ be as in the statement of the theorem. Choose a $c \in \mathfrak{C}_+$ and a $P \in \bar{\mathfrak{P}}_+ \setminus \{ 0\}$ and fix $N_0$ in accordance with Lemma \ref{lm:limN}. Throughout the rest of this proof we only consider $\min(\Nb) \geq N_0$, which means that an optimal solution $\hat{Q}_\Nb$ exists. Moreover, in the proof we need the following result, which is proved in 
the appendix.

\begin{lemma}\label{lem:seqBounded}
The sequence $(\hat{Q}_\Nb)$ is bounded in $\LInf$.
\end{lemma}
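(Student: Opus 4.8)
The goal is to show that the sequence $(\hat{Q}_\Nb)$, consisting of the minimizers of the discrete dual functionals $\mathbb{J}_P^{\Nb}$ over $\bar{\mathfrak{P}}_+(\Nb)$, is uniformly bounded in $\LInf$. Since each $\hat{Q}_\Nb$ is a trigonometric polynomial supported on the fixed finite index set $\Lambda$, by Lemma~\ref{lem:trigPoly} it suffices to bound the coefficient vectors $\hat{q}_\Nb$ uniformly in the $\ell^\infty$-norm $\|\cdot\|_\infty$. My plan is therefore to produce a uniform-in-$\Nb$ lower bound on $\mathbb{J}_P^{\Nb}(Q)$ of the same linear-versus-logarithmic type as in Lemma~\ref{lemma:linearAndLogarithmicGrowth}, and then compare it against a uniform upper bound on the optimal values $\mathbb{J}_P^{\Nb}(\hat{Q}_\Nb)$.

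\textbf{Step 1: a uniform upper bound on the optimal values.} Since $\hat{Q}_\Nb$ is a minimizer over $\bar{\mathfrak{P}}_+(\Nb)$ and (for $\min(\Nb)\ge N_0$) we have $\bar{\mathfrak{P}}_+\subset \bar{\mathfrak{P}}_+(\Nb)$, any fixed feasible competitor from $\bar{\mathfrak{P}}_+\setminus\{0\}$ is admissible in the discrete problem. Pick $Q_0\equiv 1$ (which lies in $\mathfrak{P}_+\subset\bar{\mathfrak{P}}_+(\Nb)$). Then $\mathbb{J}_P^{\Nb}(\hat{Q}_\Nb)\le \mathbb{J}_P^{\Nb}(Q_0)=\langle c,q_0\rangle - \int_{\mathbb{T}^d}P\log 1\,d\nu_\Nb = c_0$, independent of $\Nb$. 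Set $r:=c_0$.

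\textbf{Step 2: a uniform lower bound in the spirit of Lemma~\ref{lemma:linearAndLogarithmicGrowth}.} I would mimic the proof of Lemma~\ref{lemma:linearAndLogarithmicGrowth}, using Lemma~\ref{lm:limN} to fix $N_0$ so that $c\in\mathfrak{C}_+(\Nb)$ for all $\min(\Nb)\ge N_0$. The key point is that the constant $\kappa_c$ — the minimum of $\langle c,q\rangle$ over $\{Q\in\bar{\mathfrak{P}}_+(\Nb)\setminus\{0\}\mid\|q\|_\infty=1\}$ — can be chosen uniformly in $\Nb$: indeed each of these constraint sets is contained in $\{Q\mid\|q\|_\infty=1\}$ restricted to $q$-vectors indexed by $\Lambda$, and more strongly, since $\bar{\mathfrak{P}}_+(\Nb)\subseteq\bar{\mathfrak{P}}_+(\Nb')$ whenever $\Nb'$ divides $\Nb$, the infimum over the chain is nondecreasing and bounded below by the value $\kappa_c>0$ for the continuous cone $\bar{\mathfrak{P}}_+$. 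Hence $\langle c,q\rangle\ge\kappa_c\|q\|_\infty$ for all $Q\in\bar{\mathfrak{P}}_+(\Nb)\setminus\{0\}$ with the same $\kappa_c$, and with $\varepsilon:=\kappa_c/|\Lambda|$ we get $\langle c,q\rangle\ge\varepsilon\|Q\|_\infty$ via Lemma~\ref{lem:trigPoly} exactly as in \eqref{eq:cqApprox2}. For the integral term, splitting off $\|Q\|_\infty$ and using $Q/\|Q\|_\infty\le 1$ on the grid points gives $\int_{\mathbb{T}^d}P\log Q\,d\nu_\Nb\le\int_{\mathbb{T}^d}P\,d\nu_\Nb\,\log\|Q\|_\infty$, and since $\int_{\mathbb{T}^d}P\,d\nu_\Nb\to\int_{\mathbb{T}^d}P\,dm$ (this is a Riemann-sum/quadrature convergence for the trigonometric polynomial $P$, in fact it equals $p_0$ once $2n_j<N_j$), this prefactor is uniformly bounded by some constant $M$. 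Thus
\begin{equation*}
\mathbb{J}_P^{\Nb}(Q)\ge\varepsilon\|Q\|_\infty - M\log\|Q\|_\infty\qquad\text{for all }Q\in\bar{\mathfrak{P}}_+(\Nb)\setminus\{0\},\ \min(\Nb)\ge N_0,
\end{equation*}
with $\varepsilon,M$ independent of $\Nb$.

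\textbf{Step 3: conclude.} Combining Steps 1 and 2, $\varepsilon\|\hat{Q}_\Nb\|_\infty - M\log\|\hat{Q}_\Nb\|_\infty\le r$ for all $\min(\Nb)\ge N_0$. Since the function $t\mapsto\varepsilon t - M\log t$ tends to $+\infty$ as $t\to\infty$, its sublevel set $\{t>0\mid \varepsilon t - M\log t\le r\}$ is bounded, so there is a constant $B=B(\varepsilon,M,r)$ with $\|\hat{Q}_\Nb\|_\infty\le B$ for all $\min(\Nb)\ge N_0$. The finitely many remaining $\Nb$ with $\min(\Nb)<N_0$ (if any such $\hat{Q}_\Nb$ are even defined) contribute only finitely many extra values, so enlarging $B$ if necessary gives a uniform bound, and $(\hat{Q}_\Nb)$ is bounded in $\LInf$.

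\textbf{Main obstacle.} The only genuinely delicate point is the uniformity of $\kappa_c$ across the family of discrete cones in Step 2; everything else is a routine adaptation of the continuous arguments. The cleanest way to nail it down is the monotonicity observation $\bar{\mathfrak{C}}_+(\Nb)\subset\bar{\mathfrak{C}}_+$ together with Lemma~\ref{lm:limN}, which guarantees $c$ sits strictly inside $\mathfrak{C}_+(\Nb)$ for all large $\Nb$ and lets us bound the relevant minimum from below by a single positive constant — essentially $\kappa_c$ for the continuous problem — rather than a constant that could degenerate to $0$ as $\min(\Nb)\to\infty$.
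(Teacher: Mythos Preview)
Your overall architecture matches the paper's proof: bound the optimal values from above by a fixed competitor, bound $\mathbb{J}_P^{\Nb}$ from below by a linear-minus-logarithmic expression in $\|Q\|_\infty$ with constants independent of $\Nb$, and compare. Steps~1 and~3 are fine; your choice $Q_0\equiv 1$ is even cleaner than the paper's generic $\tilde Q\in\mathfrak{P}_+$.

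The gap is in Step~2, precisely at the point you flag as the ``main obstacle'': your argument for the uniformity of $\kappa_c$ has the inequality backwards. Since $\bar{\mathfrak{P}}_+(\Nb)\supset\bar{\mathfrak{P}}_+$, the minimum of $\langle c,q\rangle$ over the unit sphere of the \emph{larger} cone $\bar{\mathfrak{P}}_+(\Nb)$ satisfies $\kappa_c^{\Nb}\le \kappa_c^{\text{cont}}$, so the continuous value is an \emph{upper} bound, not a lower bound. Your divisibility-chain monotonicity is correct ($\kappa_c^{\Nb}$ is nondecreasing along such a chain), but it only bounds $\kappa_c^{\Nb}$ below by the value at the \emph{start} of the chain, and it does not cover all $\Nb$ with $\min(\Nb)\ge N_0$, only multiples of a fixed $\Nb_0$. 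As written, nothing prevents $\inf_{\Nb}\kappa_c^{\Nb}=0$.

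The paper closes this gap by a compactness argument: assume $\inf_{\Nb}\kappa_c^{\Nb}=0$, pick unit-norm minimizers $q_{\Nb}^\star\in\bar{\mathfrak{P}}_+(\Nb)$ with $\langle c,q_{\Nb}^\star\rangle\to 0$, pass to a convergent subsequence in the compact set $\{\|q\|_\infty=1\}\subset\mathbb{C}^{|\Lambda|}$, and observe that the limit $q^\star$ has $\|q^\star\|_\infty=1$, $\langle c,q^\star\rangle=0$, and $Q^\star\in\bar{\mathfrak{P}}_+$ (because the grids become dense), contradicting $c\in\mathfrak{C}_+$. Alternatively, the quantitative estimate inside the proof of Lemma~\ref{lm:limN} already yields $\langle c,\hat p\rangle\ge \kappa_c-(\kappa_c+c_0)\pi\Delta/\min(\Nb)$ for $\|\hat p\|_\infty=1$, which gives a uniform positive lower bound for all sufficiently large $\min(\Nb)$; either route repairs your Step~2.
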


Since  $(\hat{Q}_{\Nb})$ is bounded, there is a convergent subsequence, call it $(\hat{Q}_{\Nb})$ for convenience, converging in the $\LInf$ norm to some function $\hat{Q}_{\infty}$. Since $(\hat{Q}_{\Nb})$ is a set of continuous functions, this means that the convergence is in fact uniform and hence $\hat{Q}_{\infty}$ is a continuous function. Now since i) the convergence is uniform, ii) $\hat{Q}_{\infty}$ is continuous, and iii) the grid points become dense on $\mT^d$ as $\min( \Nb )$ goes to infinity, we obtain $\hat{Q}_{\infty}(e^{i\thetab}) \geq 0$ for all $\thetab$, and hence $\hat{Q}_{\infty}$ belongs to $\bar{\mathfrak{P}}_+  \setminus \{ 0 \}$.

It remains to show that  $\hat{Q}_{\infty} = \hat{Q}$. This will be done by proving that  $\|\hat{Q}_\infty - \hat{Q}\|_\infty \leq \varepsilon$ for all $\varepsilon > 0$. To do this, fix a $\tilde{Q} \in \mathfrak{P}_+$ and consider $\hat{Q} + \eta \tilde{Q}$, which belongs to $\mathfrak{P}_+$ for all $\eta > 0$. By simply adding and subtracting $\eta \tilde{Q}$, the triangle inequality gives 
\begin{equation}\label{eq:convProof1}
\| \hat{Q}_\infty  - \hat{Q} \|_\infty \leq \eta \|\tilde{Q}\|_\infty + \| (\hat{Q}_\infty + \eta \tilde{Q}) - \hat{Q} \|_\infty.
\end{equation}
We want to bound the second term. To this end, note that
\begin{equation*}
\mathbb{J}_P (\hat{Q} + \eta \tilde{Q}) - \mathbb{J}_P (\hat{Q}) = \langle c, \eta \tilde{q} \rangle - \int_{\mathbb{T}^d}\!\! P \log \left( 1 + \frac{\eta \tilde{Q}}{\hat{Q}} \right) \dm ,
\end{equation*}
and, since the integral is nonnegative, we obtain
\begin{equation}\label{J(Qhat+Delta)}
\mathbb{J}_P (\hat{Q} + \eta \tilde{Q}) \leq \mathbb{J}_P (\hat{Q}) + \eta \langle c, \tilde{q} \rangle.
\end{equation}
The same holds for $\mathbb{J}_P^\Nb$, i.e., $\mathbb{J}_P^\Nb(\hat{Q}_\Nb  + \eta \tilde{Q}) \leq \mathbb{J}_P^\Nb(\hat{Q}_\Nb) + \eta \langle c, \tilde{q} \rangle$.
By optimality we also have $\mathbb{J}_P^\Nb(\hat{Q}_\Nb) \leq \mathbb{J}_P^\Nb(\hat{Q}  + \eta \tilde{Q}) < \infty$ for all $\eta > 0$, and hence
\begin{equation}\label{eq:convAnal2}
 \mathbb{J}_P^\Nb(\hat{Q}_\Nb  + \eta \tilde{Q}) \leq \mathbb{J}_P^\Nb(\hat{Q}  + \eta \tilde{Q})  + \eta \langle c, \tilde{q} \rangle.
\end{equation}
Now, since $\hat{Q}_\Nb  + \eta \tilde{Q} \rightarrow \hat{Q}_{\infty} + \eta \tilde{Q} \in \mathfrak{P}_+$, we know that, for large enough values of $\min (\Nb)$, we have $\hat{Q}_\Nb  + \eta \tilde{Q} \in \mathfrak{P}_+$. Therefore, the left hand side of \eqref{eq:convAnal2} is guaranteed to be well-defined for all values of $\min (\Nb)$§ larger than this value. We can thus take the limit on both sides of  
\eqref{eq:convAnal2} to obtain
\begin{equation*}
\mathbb{J}_P(\hat{Q}_\infty  + \eta \tilde{Q}) \leq \mathbb{J}_P(\hat{Q}  + \eta \tilde{Q}) + \eta \langle c, \tilde{q} \rangle,
\end{equation*}
which together with \eqref{J(Qhat+Delta)} yields
\begin{equation}
\label{eq:convAnal3}
\mathbb{J}_P(\hat{Q}_\infty  + \eta \tilde{Q}) \leq  \mathbb{J}_P (\hat{Q}) + 2\eta \langle c, \tilde{q} \rangle.
\end{equation}
Now consider the sets  $D_\delta = \{ Q \in \bar{\mathfrak{P}}_+ \; | \; \mathbb{J}_P(Q) \leq \mathbb{J}_P(\hat{Q}) + \delta \}$.
Since the Hessian at the optimal solution is positive definite we have $\bigcap_{\delta > 0} D_\delta = \{ \hat{Q} \}$. Therefore, it follows from \eqref{eq:convAnal3} that $\eta > 0$ can be chosen so that $\| (\hat{Q}_\infty  + \eta \tilde{Q}) - \hat{Q} \|_\infty < \tilde{\varepsilon}$ for any $\tilde{\varepsilon} > 0$. Consequently, by selecting $\eta$ sufficiently small, we may bound \eqref{eq:convProof1} by an arbitrary small positive number. Hence $\hat{Q}_\infty = \hat{Q}$. 
%
%
\end{proofWithName}

\section{Application to system identification}\label{sec:sysIdEx}
The power spectrum of a signal represents the energy distribution across frequencies of the signal. For a multidimensional, discrete-time, zero-mean, and homogeneous%
\footnote{
Homogeneity  implies that covariances $c_\kb:=E\{y({\bf t}+\kb)\overline{y({\bf t})}\}$ are invariant with ``time'' ${\bf t}\in \mZ^d$. From this it is also easy to see that $c_{-\kb} = \bar{c}_\kb$.}
stochastic process  $\{y({\bf t})\}$, defined for ${\bf t}\in\mZ^d$, the power spectrum is defined as the nonnegative measure $d\mu$ on $\mT^d$ whose Fourier coefficients are the covariances
\begin{equation*}
c_\kb = \int_{\mT^d} e^{i(\kb,\thetab)} d\mu.
\end{equation*}
In one dimension the singular part of the measure represents spectral lines, and if the absolutely continuous part is also rational, $\Phi = P/Q$, one can use spectral factorization to determine the filter coefficients for an autoregressive-moving-average (ARMA) model which, when feed with white noise input, reproduces a stochastic signal with the same power distribution as $\Phi$. Therefore the one-dimensional rational covariance extension problem can be used for system identification \cite{LindquistPicci2015}. 

With the theory developed in this paper we can estimate rational spectra in higher dimensions. However spectral factorization is not in general possible when $d > 1$  \cite{dumitrescu2007positive}. For $d = 2$, Geronimo and Woerdeman have established conditions for when it is possible to factorize a given trigonometric polynomial as a sum-of-one-square \cite[Thm. 1.1.1]{geronimo2004positive}. These includes a non-trivial rank condition on a reduced matrix of Fourier coefficients, which we shall call $\Gamma_{\text{red}}$,  but also gives an explicit algorithm for obtaining the factors in cases when it is possible. Nevertheless, in the following example we will illustrate how the theory could be used in the case when covariances and cepstral coefficients comes from a rational, factorizable spectrum.

We consider a 2D recursive filter with transfer function
\[
\frac{b(e^{i\theta_1}, e^{i\theta_2})}{a(e^{i\theta_1}, e^{i\theta_2})} = \frac{ \sum_{\kb \in \Lambda_+} b_{\kb} e^{-i(\kb,\thetab)}}{\sum_{\kb \in \Lambda_+} a_{\kb} e^{-i(\kb,\thetab)}},
\]
where $\Lambda_+=\{(k_1,k_2)\in \mZ^2\mid 0\le k_1\le 2, 0\le k_2\le 2 \}$ and the coefficients are given by
$b_{(k_1,k_2)}=B_{k_1+1, k_2+1} $ and $a_{(k_1,k_2)}=A_{k_1+1, k_2+1}$, where
\[
B =
\begin{bmatrix}
    0.9589 &  -0.0479 &   0.0959 \\
    0.0959 &   0.0479 &   0.0959 \\
   -0.0959 &   0.0479 &   0.1918
\end{bmatrix},
%
%
\quad
A =
\begin{bmatrix}
    1.0000 &   0.1000 &   0.0500 \\
   -0.1000 &   0.0500 &  -0.0500 \\
    0.2000 &  -0.0500 &  -0.1000
\end{bmatrix} .
%
%
\]
Then the corresponding spectrum is given by
\[
\Phi(e^{i\thetab}) = \Phi(e^{i\theta_1}, e^{i\theta_2}) = \frac{P(e^{i\theta_1}, e^{i\theta_2})}{Q(e^{i\theta_1}, e^{i\theta_2})} = \left|\frac{b(e^{i\theta_1}, e^{i\theta_2})}{a(e^{i\theta_1}, e^{i\theta_2})}\right|^2,
\]
and hence the index set $\Lambda$ of the coefficients of the trigonometric polynomials $P$ and $Q$ is given by $\Lambda=\{(k_1,k_2)\in \mZ^2\,|\; |k_1|\le 2, |k_2|\le 2 \}$.

We approximate the continuous problem with a discrete one in accordance with Theorem \ref{theo:convergence}. The two-dimensional spectrum $\Phi$ is evaluated on a grid of size $30 \times 30$, and shown in Figure~\ref{fig:trueSpectrum}. The trigonometric polynomials corresponding to the true spectrum are shown in Figure~\ref{fig:truePQ}. Its covariances and cepstral coefficients are computed,
and a spectrum is then estimated by (unregularized) covariance and cepstral matching along the lines of Theorem \ref{theo:cepstral}. The problem is solved numerically using CVX, a Matlab package for solving disciplined convex programming problems \cite{cvx, grant2008graph}, and the resulting spectrum is shown in Figure~\ref{fig:estim_spectrum}. The relative error\footnote{Let the relative error between two functions $\Phi_{\text{true}}$ and $\Phi_{\text{est}}$ be the point-wise evaluation of $|\Phi_{\text{true}} - \Phi_{\text{est}}|/\Phi_{\text{true}}$.}
is shown in Figure~\ref{fig:estim_spectrumError}.  As seen from the relative error,  we recover the true spectrum with good accuracy.  For the ME solution, the resulting spectrum and relative error is shown in Figure~\ref{fig:meSpectrum}.

\begin{figure}%
  \centering
  \includegraphics[width=0.45\textwidth]{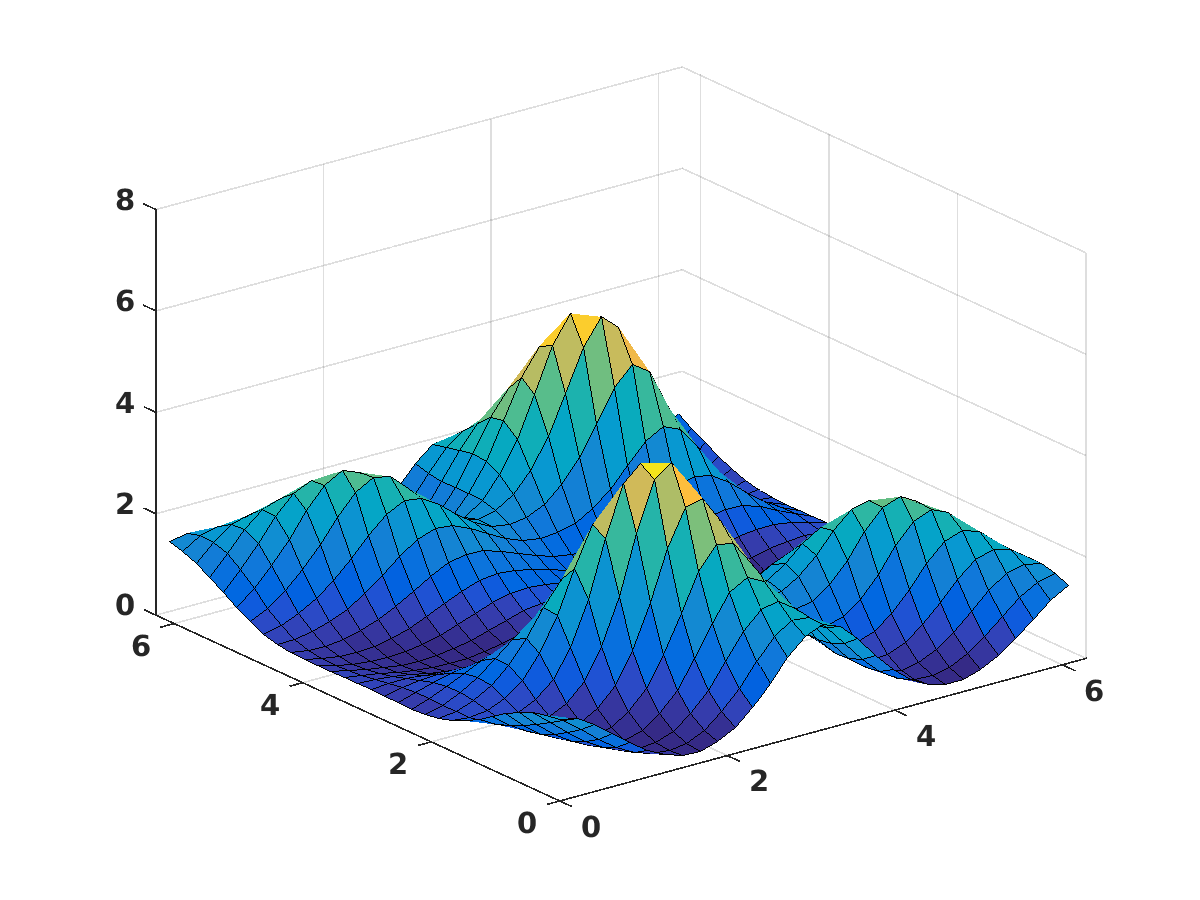}
  \caption{The true spectrum.}
  \label{fig:trueSpectrum}
\end{figure}

\begin{figure*}%
  \centering
  \hfil
  \subfloat[The true polynomial $P$. \label{fig:true_P}]{\includegraphics[width=0.45\textwidth]{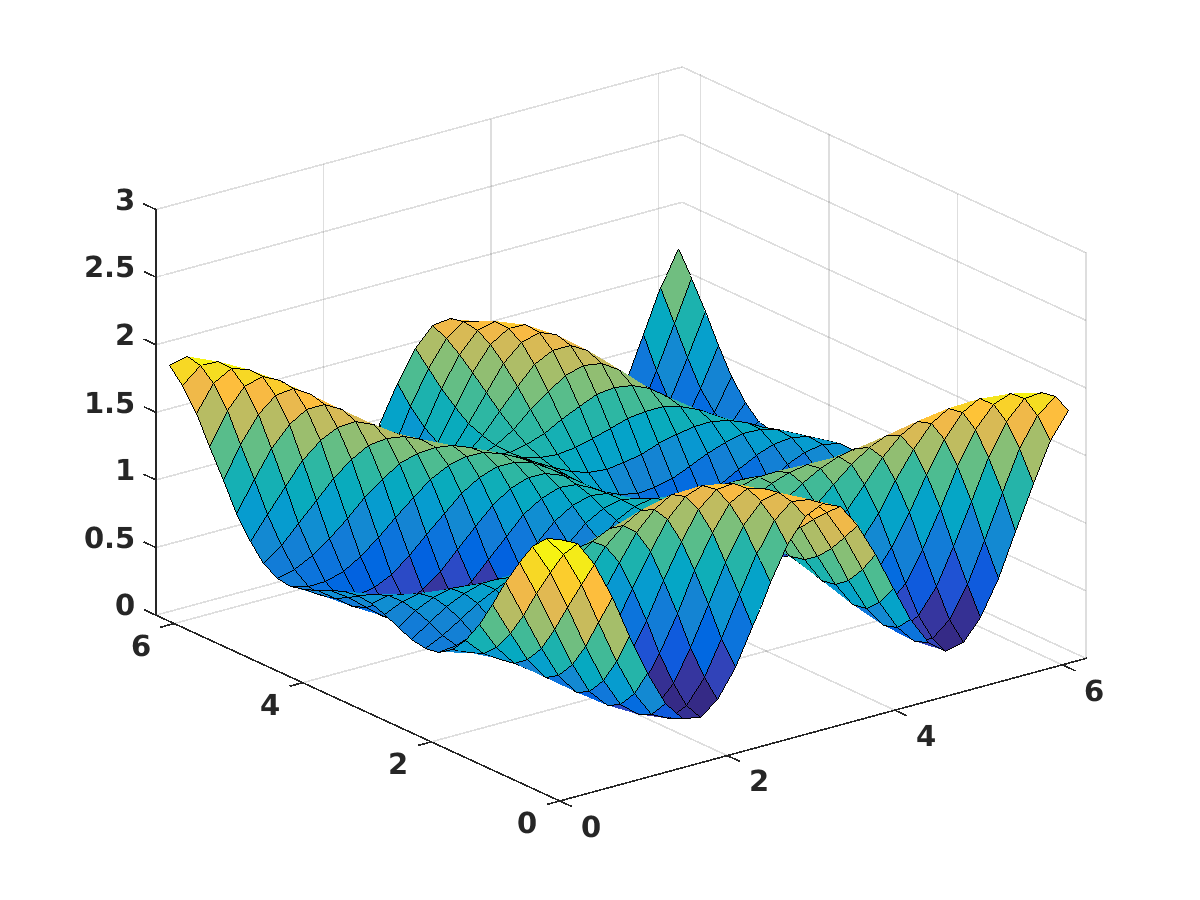}}%
  \hfil
  \subfloat[The true polynomial Q. \label{fig:true_Q}]{\includegraphics[width=0.45\textwidth]{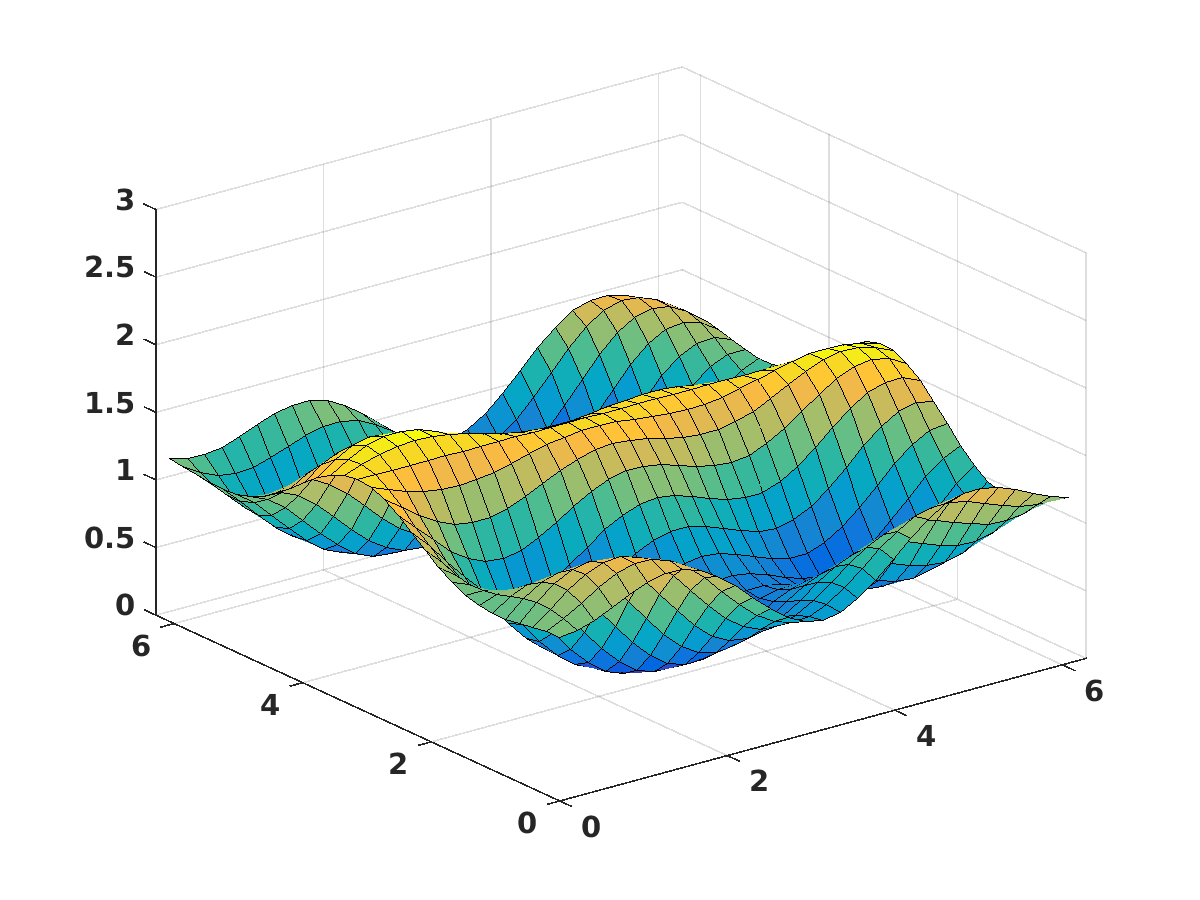}}%
  \caption{The spectrum of the system}%
  \label{fig:truePQ}%
\end{figure*}

\begin{figure*}%
  \centering
  \hfil
  \subfloat[Estimated spectrum. \label{fig:estim_spectrum}]{\includegraphics[width=0.45\textwidth]{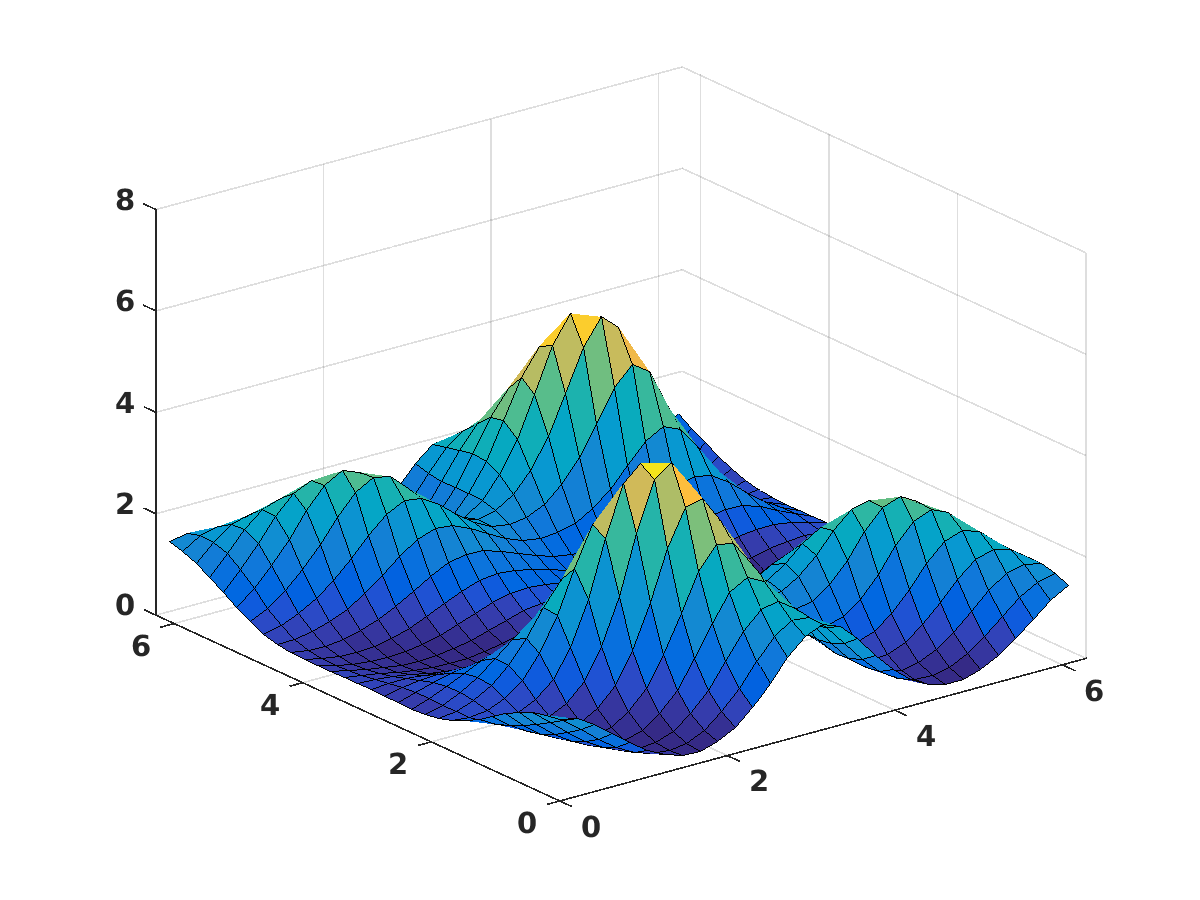}}%
  \hfil
  \subfloat[Relative error. \label{fig:estim_spectrumError}]{\includegraphics[width=0.45\textwidth]{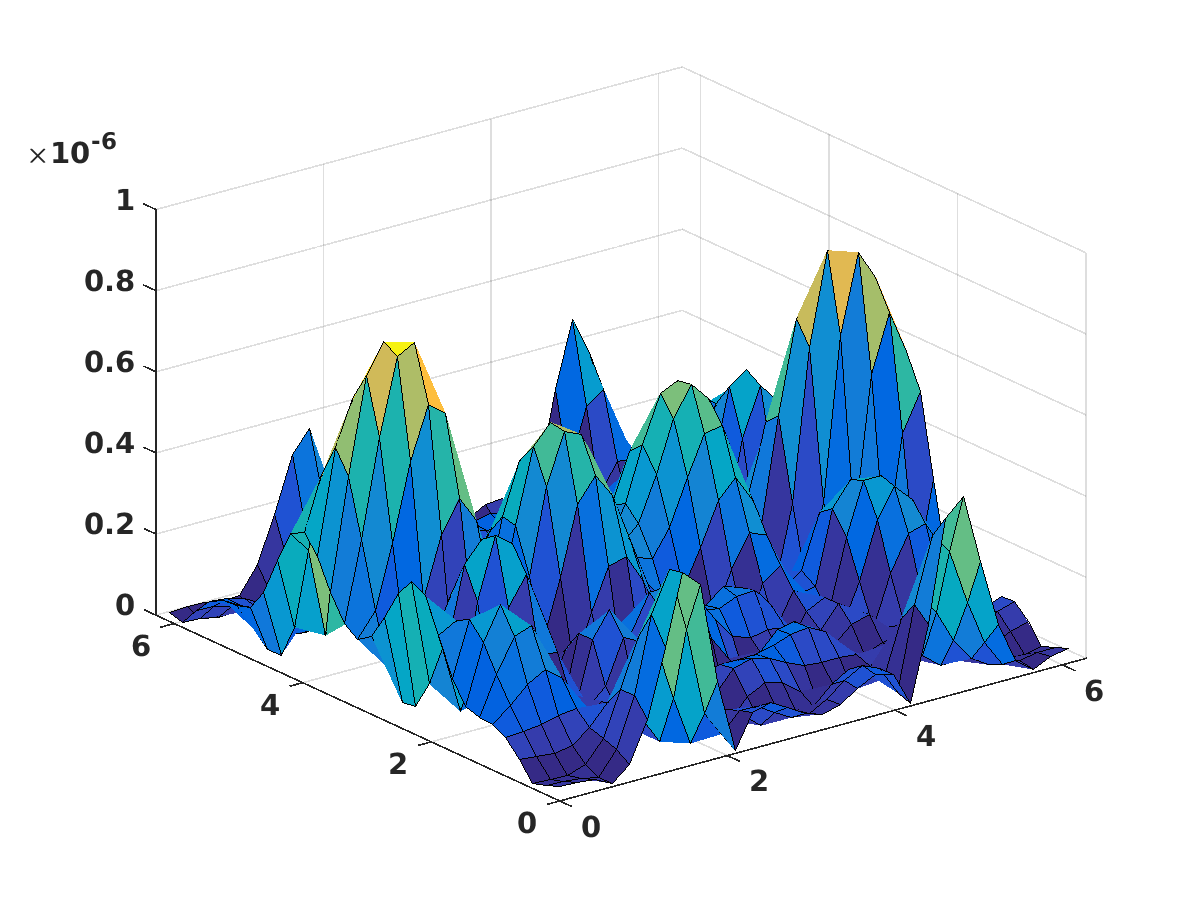}}%
  \caption{Spectrum estimated with covariance and cepstral matching.}%
  \label{fig:estimSpectrum}%
\end{figure*}

\begin{figure*}%
  \centering
  \hfil
  \subfloat[ME-spectrum. \label{fig:me_spectrum}]{\includegraphics[width=0.45\textwidth]{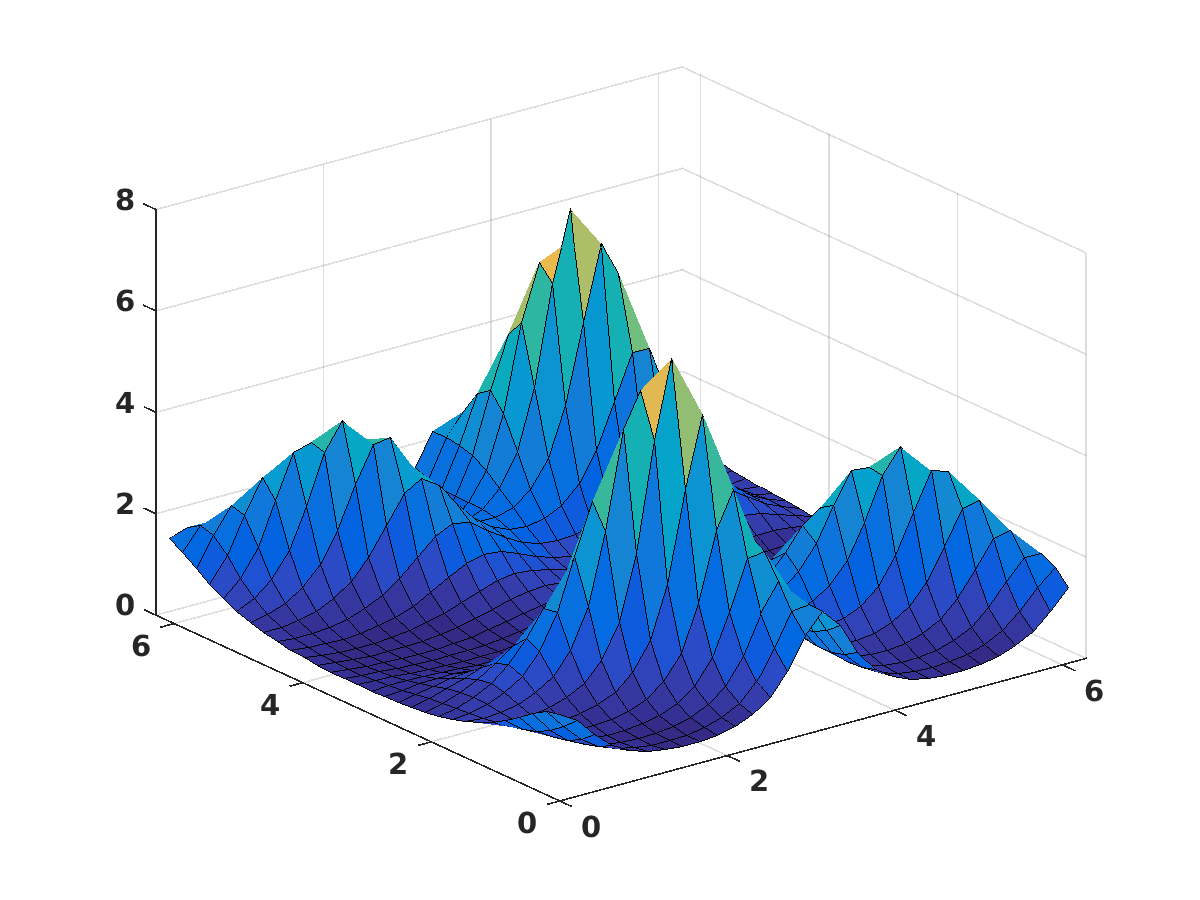}}%
  \hfil
  \subfloat[Relative error. \label{fig:me_spectrumError}]{\includegraphics[width=0.45\textwidth]{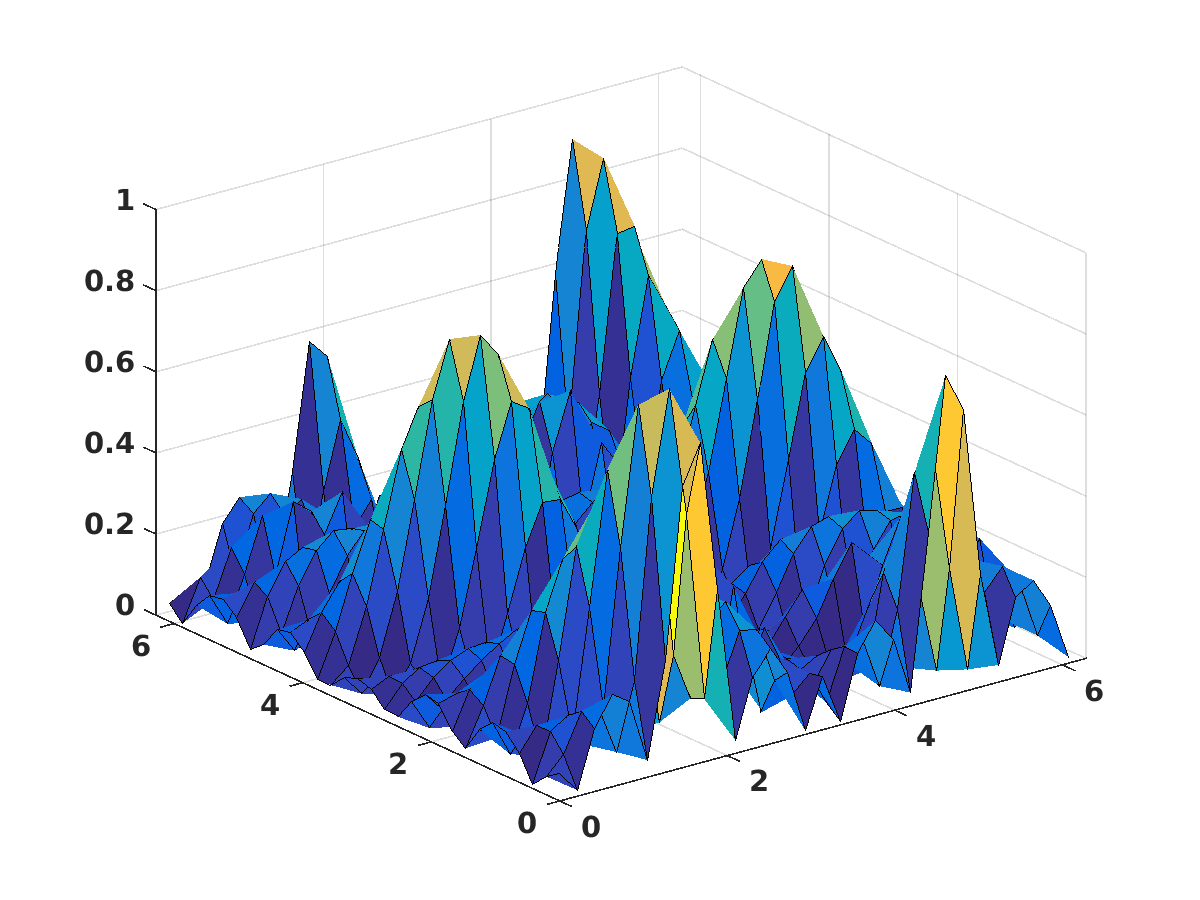}}%
  \caption{The ME-estimation and relative error to true spectrum.}%
  \label{fig:meSpectrum}%
\end{figure*}

For system identification we are now interested in factorizing the two rational spectra as a sum-of-one-square, if possible.
To check factorizability for the two solutions, we apply the rank condition from \cite[Theorem 1.1.1]{geronimo2004positive}, which requires that the corresponding submatrix $\Gamma_{\text{red}} \in \mathbb{C}^{6 \times 6}$ should be of rank four in both cases. 
However, such a matrix is generically full rank and we have to study the singular values in order to determine the numerical rank. 

To illustrate this issue, in Figure~\ref{fig:svd} we plot the singular values of $\Gamma_{\text{red}}$ for the respective polynomials. Figure~\ref{fig:svd_Q} shows  the singular values corresponding to the solution $Q_{\text{true P}}$ computed with the true polynomial $P$  as prior (cf. Theorem \ref{theo:conjecture} and Section \ref{subsec:commentsAndExamples}). This solution, as well as the solution obtained by covariance and cepstral matching, gives the exact spectrum back, up to numerical errors, and hence should be factorizable.  For both these solutions we can also observe a significant decrease in size between the fourth and the fifth singular values in Figure~\ref{fig:svd_Q}. This indicates that the matrices in fact have numerical rank four, and spectral factorization is thus possible.
Performing the spectral factorization on the solution with covariance and cepstral matching gives polynomials with coefficients
\begin{equation*}
B_{\text{est}} = 
\begin{bmatrix}
    0.9589 &  -0.0479 &   0.0959 \\
    0.0959 &   0.0479 &   0.0959 \\
   -0.0959 &   0.0479 &   0.1918
\end{bmatrix},
\quad
A_{\text{est}} =
\begin{bmatrix}
    1.0000 &   0.1000 &   0.0500 \\
   -0.1000 &   0.0500 &  -0.0500 \\
    0.2000 &  -0.0500 &  -0.1000
\end{bmatrix},
%
%
\end{equation*}
which agree completely with the true coefficients.

For the ME spectrum on the other hand there is no guarantee that it will be factorizable. In general there is {\em a priori\/} no reason why spectral factorization should be possible.
 However, 
in Figure~\ref{fig:svd_Q} we observe a decrease in size between the fourth and the fifth singular values also for the ME solution $\Phi_\text{ME}=1/Q_\text{ME}$, although this decrease is significantly smaller than for the other polynomials. If for the moment we assume that the rank condition on $\Gamma_\text{red}$ is actually (approximately) satisfied and apply the factorization algorithm of \cite{geronimo2004positive}, we obtain the coefficients 
\begin{equation*}
A_{\text{ME}} =
\begin{bmatrix}
    1.0317 &   0.1423 &  -0.0251 \\
   -0.1881 &  -0.0173 &  -0.1252 \\
    0.2872 &  -0.0570 &  -0.2597
\end{bmatrix}
\end{equation*}
for the possible spectral factor $a_\text{ME}$ of $Q_\text{ME}$. Forming the corresponding true $Q$, namely $|a_\text{ME}|^2$, and comparing it with $Q_\text{ME}$, we obtain a relative error of up to 10\% with respect to $Q_\text{ME}$. We leave the question whether this is a reasonable approximation to a future study.
Note also that if the ME spectrum is factorizable, the factors are given directly from the covariances by the Geronimo and Woerdeman algorithm. However if this is not the case, rational covariance extension will still give a rational spectrum.
An important open question related to this, and suggested by the above analysis, is whether the solution can be tuned by an appropriate choice of $P$ so that the rank condition is satisfied, and hence factorization is possible.

%
%

\begin{figure*}%
  \centering
  \hfil
  \subfloat[Singular values of $\Gamma_{\text{red}}$ for different $P$. \label{fig:svd_P}]{\includegraphics[width=0.45\textwidth]{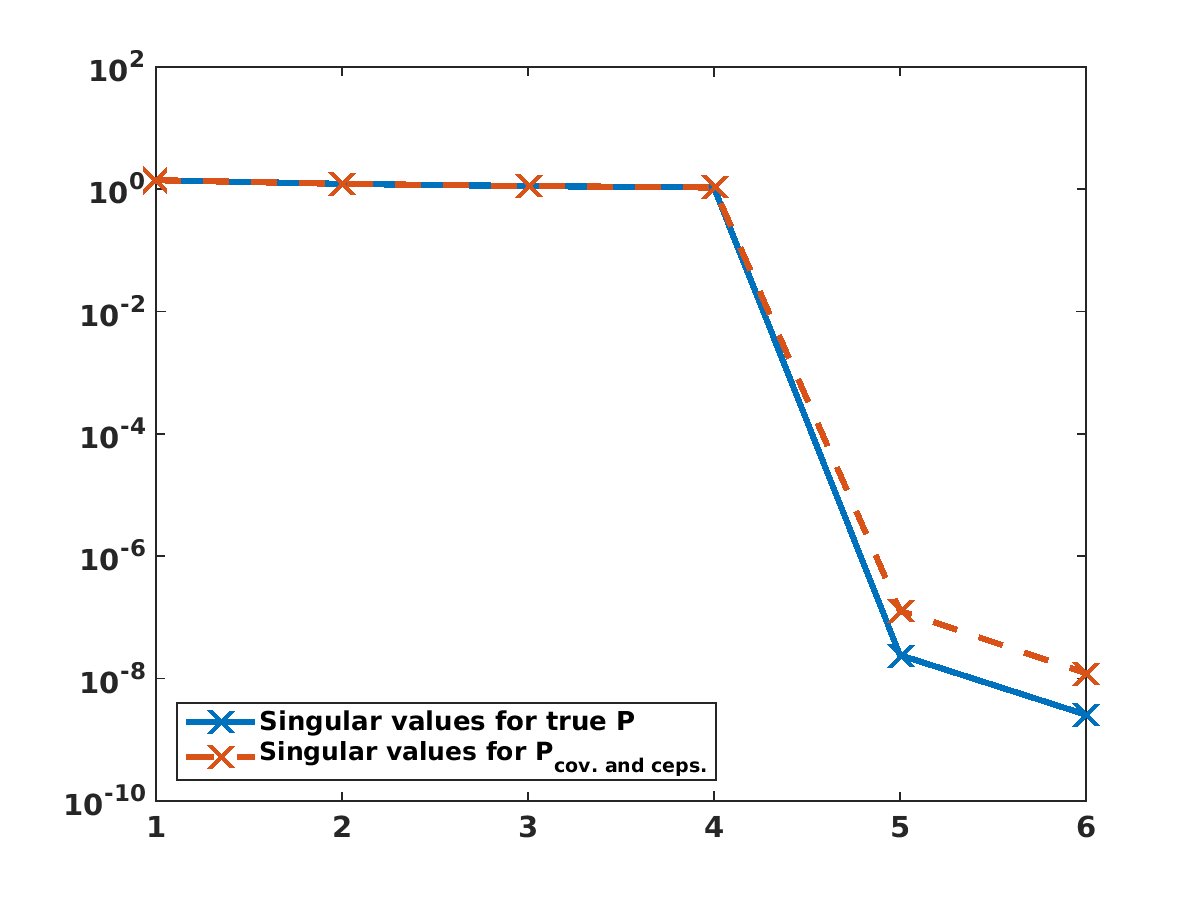}}%
  \hfil
  \subfloat[Singular values of $\Gamma_{\text{red}}$ for different $Q$. \label{fig:svd_Q}]{\includegraphics[width=0.45\textwidth]{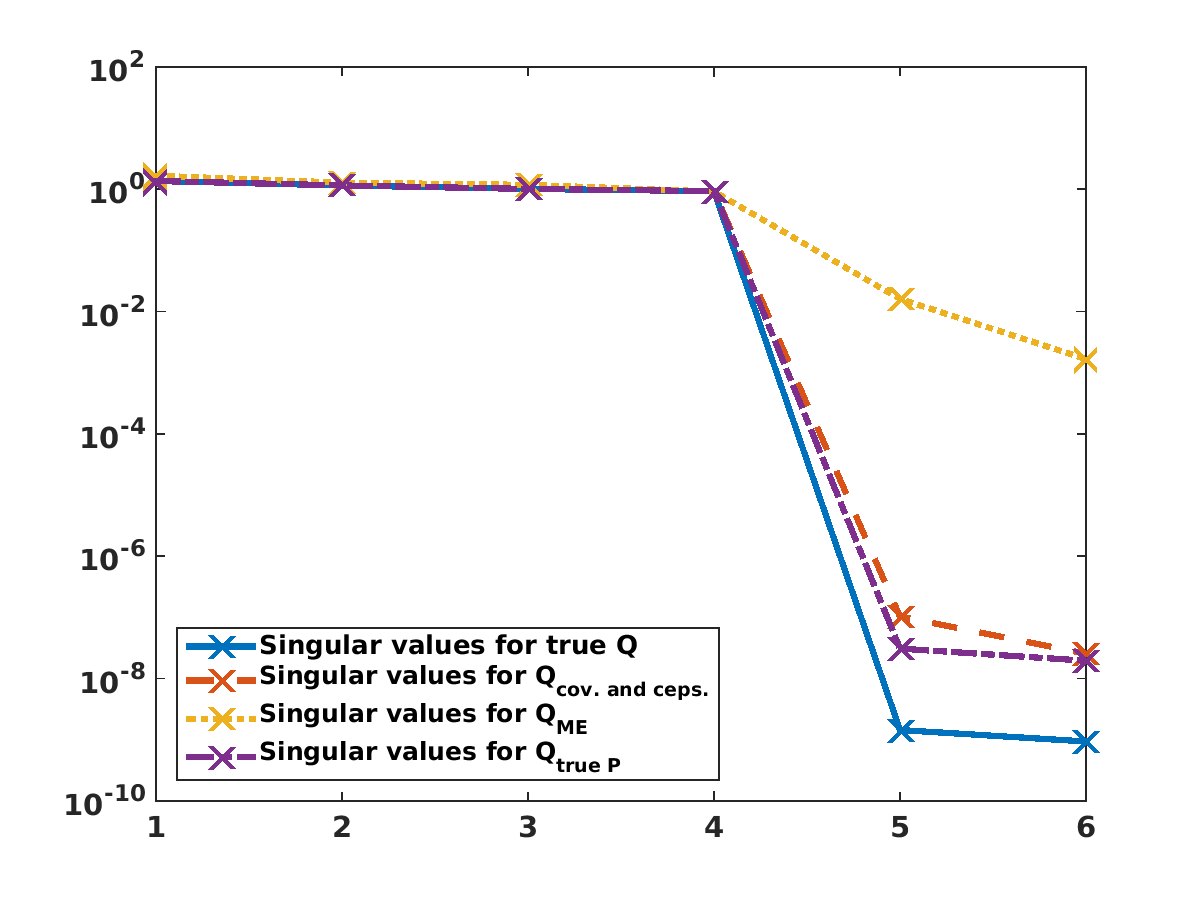}}%
  \caption{The singular values of the reduced covariance matrix.}%
  \label{fig:svd}%
\end{figure*}

\section{Application to image compression}\label{sec:imageComp}
Since the expression \eqref{eq:rat} is determined by a limited number of parameters, this approach enables compression of data. Moreover, the smoothness of the parameterization will facilitate tuning to specifications.
Therefore we apply the two-dimensional  circulant RCEP to  compression of black-and-white images. 
Compression is achieved by approximating the image with a rational spectrum, thereby using fewer parameters.
We compare the ME spectrum to the solution resulting from regularized covariance and cepstral matching. By choosing $n_1 \ll N_1$, $n_2 \ll N_2$, where $N_1$ and $N_2$ are the dimensions of the image, we obtain a significant reduction in number of parameters describing the image.

A seemingly straight-forward way is to compute the covariances and cepstral coefficients directly from the image, and then use these to compute the spectrum. However, if the discrete spectrum is zero in one of the grid points, the  (discrete) cepstrum is not well-defined. Hence simultaneous covariance and cepstral matching cannot be applied. Therefore we transform the image, denoted by $\Psi$, using $\Phi = e^{\Psi}.$ Since $\Psi$ is real, $\Phi$ is guaranteed to be real and positive for all discrete frequencies, and $\Psi$ is obtained as $\Psi = \log\Phi$. We then compute \eqref{eq:Cov} and \eqref{eq:cepstrum} and obtain the approximant $\hat \Phi$ from Theorem~\ref{theo:cepstralReg}. 
Here we use the real sequences of covariances and cepstral coefficients obtained by extending the image by symmetric mirroring (i.e., using the discrete cosine transform \cite[Section 4.2]{rao2014discrete}). However, the covariances and cepstral coefficients of $\Phi$ can also be computed as the inverse 2D-FFT of $e^{\Psi}$ and $\Psi$ respectively.

Moreover, note that a ME solution of the same maximum degree as a solution with a full-degree $P$  have about half the number of parameters. To compensate for this, we let the degree of the ME solution be a factor $\sqrt{2}$ higher (rounded up), in order to get a fair comparison.

\subsection{Compression of simplistic images}
To better understand the  different methods we first perform compression on a simple image of only black and white squares. The original image is shown in Figure~\ref{fig:pixel_true} and various results are shown in Figure~\ref{fig:pixelImages}.  Figure~\ref{fig:pixel_ceps_n4}, shows that, if too few coefficients are used, the compression cannot represent the harmonics present in the image, regardless of the use of a nontrivial $P$.
A visual assessment of the result shows that \ref{fig:pixel_me_n6} clearly outperforms \ref{fig:pixel_ceps_n4}, and that \ref{fig:pixel_me_n15} is still slightly better than \ref{fig:pixel_ceps_n10}. However \ref{fig:pixel_ceps_n11} and \ref{fig:pixel_ceps_n30} are better than \ref{fig:pixel_me_n16} and \ref{fig:pixel_me_n43}, respectively. In order to more objectively assess the quality of the two different compression methods, we also compute the MSSIM value of the compressed images. This is a measure, taking values in the interval $[0,1]$,  for evaluating quality and degradation of images, for which $1$ means exact agreement \cite{wang2004image}. A plot of the MSSIM value for compressions of different degree is shown in Figure~\ref{fig:MSSIM_graph}. However note that this measure does not agree completely with the visual impression of all images. Most notably, the measure gives a higher value to the grey image in Figure~\ref{fig:pixel_ceps_n4} than the image with structure in Figure~\ref{fig:pixel_me_n6}.

\begin{figure}%
  \centering
  \includegraphics[width = 0.20\textwidth]{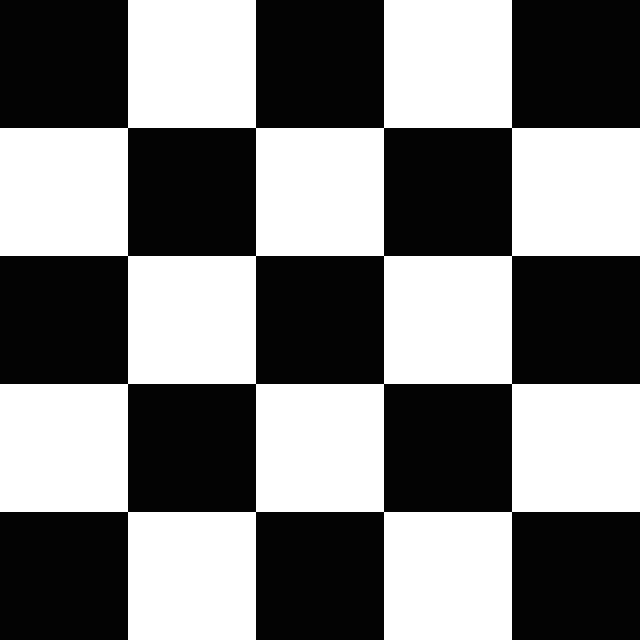}
  \caption{A simplistic test image. Each black or white square is $128 \times 128$ pixels.}
  \label{fig:pixel_true}
\end{figure}

\begin{figure*}%
  \centering
  \subfloat[Cepstral matching, $n = 4$. \label{fig:pixel_ceps_n4}]{\includegraphics[width=0.20\textwidth]{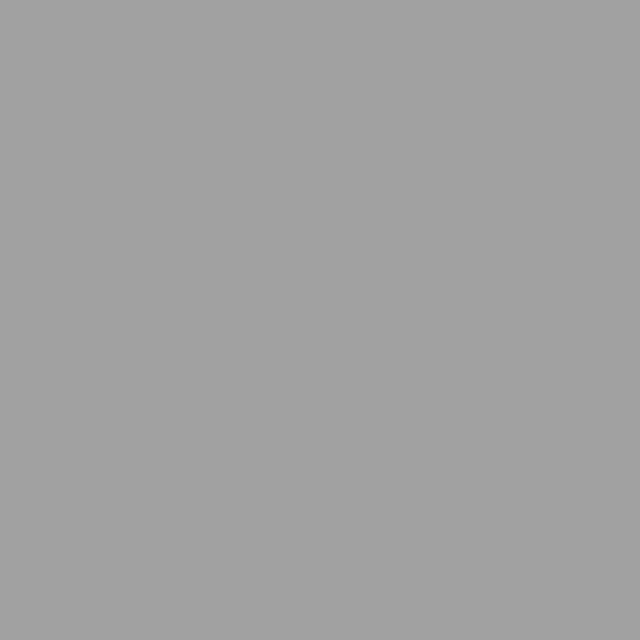}}%
  \hfil
  \subfloat[Cepstral matching, $n = 10$. \label{fig:pixel_ceps_n10}]{\includegraphics[width=0.21\textwidth]{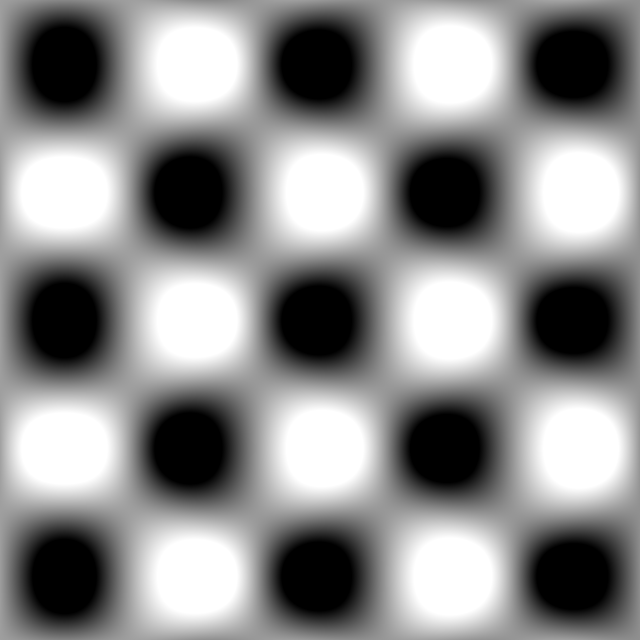}}%
  \hfil
  \subfloat[Cepstral matching, $n = 11$. \label{fig:pixel_ceps_n11}]{\includegraphics[width=0.21\textwidth]{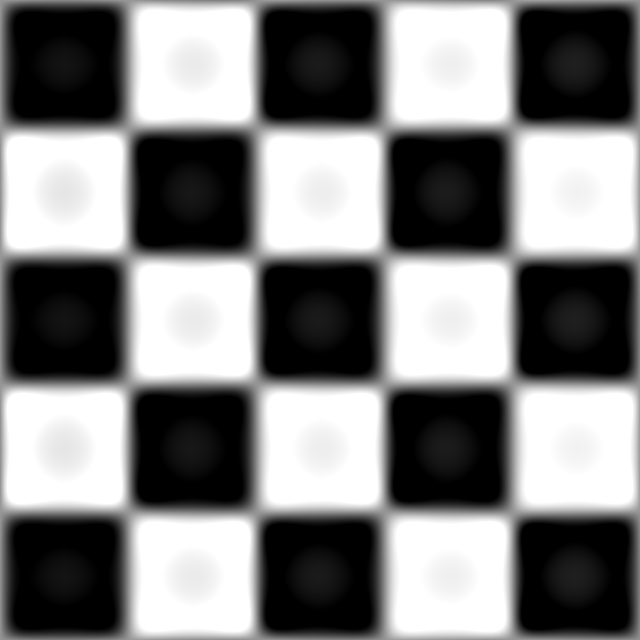}}%
  \hfil
  \subfloat[Cepstral matching, $n = 30$. \label{fig:pixel_ceps_n30}]{\includegraphics[width=0.21\textwidth]{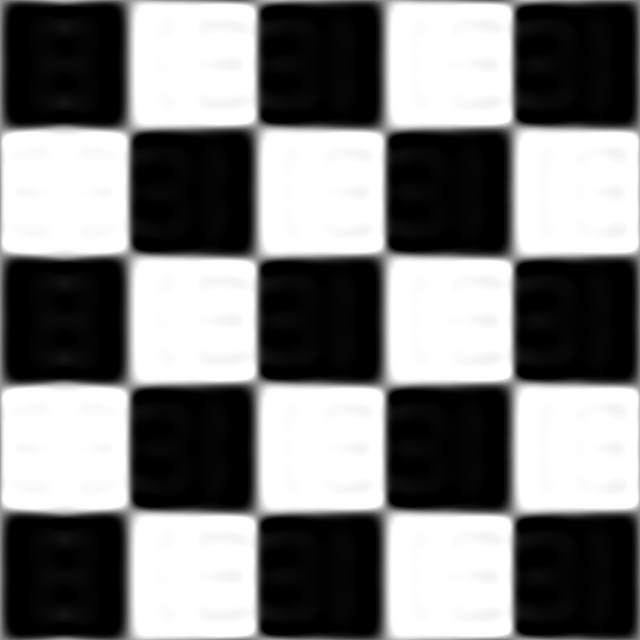}} \\
  \subfloat[ME solution, $n = 6$. \label{fig:pixel_me_n6}]{\includegraphics[width=0.21\textwidth]{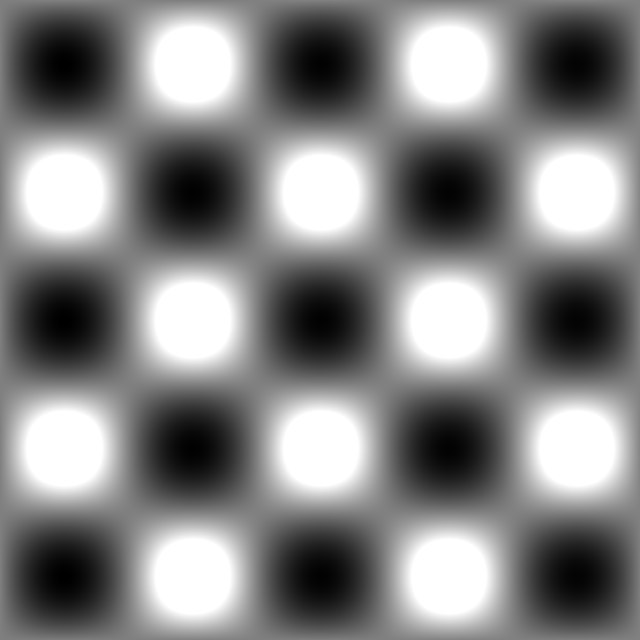}}%
  \hfil
  \subfloat[ME solution, $n = 15$. \label{fig:pixel_me_n15}]{\includegraphics[width=0.21\textwidth]{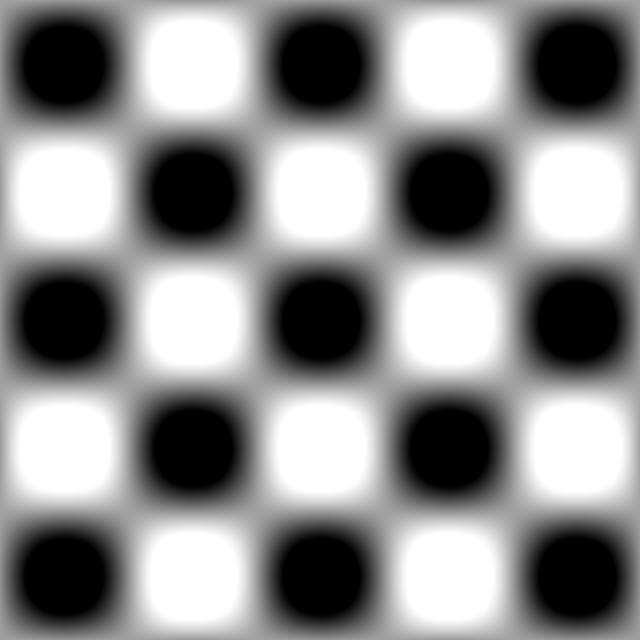}}%
  \hfil
  \subfloat[ME solution, $n = 16$. \label{fig:pixel_me_n16}]{\includegraphics[width=0.21\textwidth]{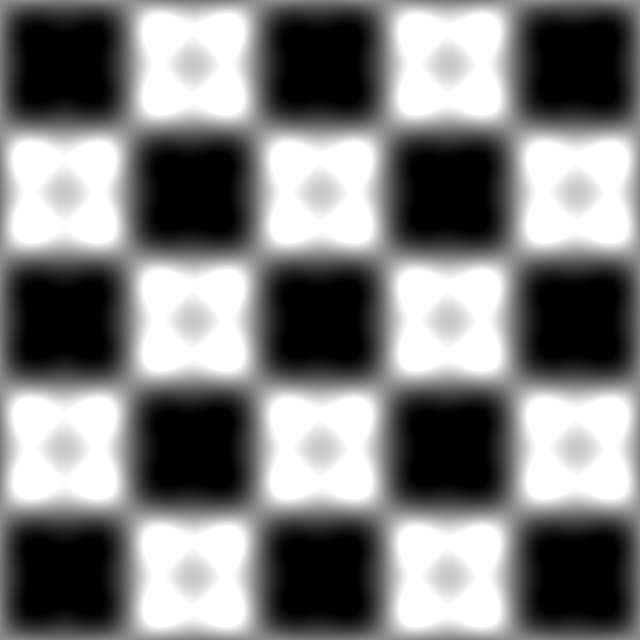}}%
  \hfil
  \subfloat[ME solution, $n = 43$. \label{fig:pixel_me_n43}]{\includegraphics[width=0.21\textwidth]{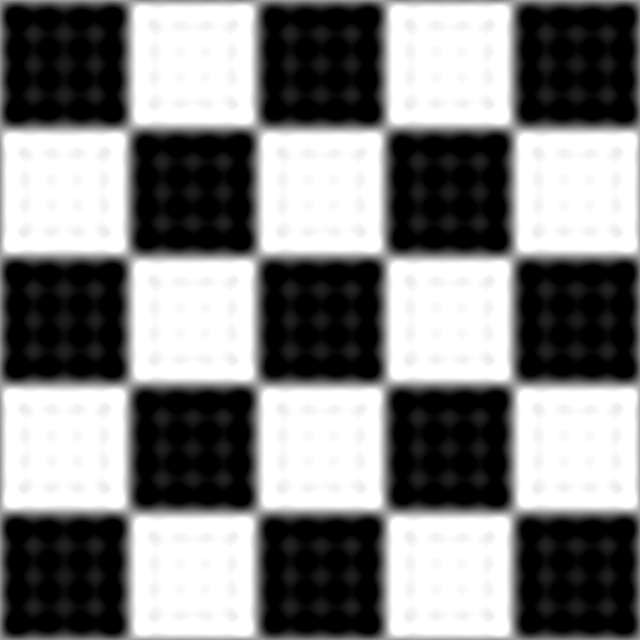}}%
  \caption{Compressions of the simple image shown in Figure~\ref{fig:pixel_true}. The top row shows compression with regularized covariance and cepstral matching, where $\lambda = 10^{-2}$, and the bottom row shows compression with the maximum-entropy solution. In all cases $n_1 = n_2$, and the pair of compressions in each column have approximately the same number of parameters, namely $n_{\rm me} \approx \sqrt{2} \, n_{\rm ceps}$.}%
  \label{fig:pixelImages}%
\end{figure*}

\begin{figure}[!t]
\centering
\includegraphics[width = 0.55 \columnwidth]{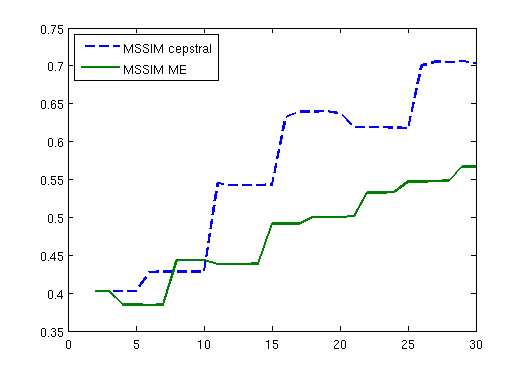}
\caption{MSSIM values of different compression levels, plotted against $n$ for the compression with cepstral matching. Hence the corresponding ME compression has $\lceil \sqrt{2} n \rceil$ coefficients.}
\label{fig:MSSIM_graph}
\end{figure}

\subsection{Compression of real images}

\begin{figure}[t]
  \centering
  \subfloat[Original image. \label{fig:firstPage}]{\includegraphics[width=0.30\columnwidth]{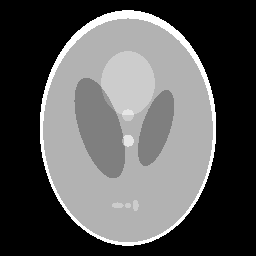}}%
  \hfil
  \subfloat[Cepstral matching, $n=30$ and $\lambda = 10^{-2}$. \label{fig:firstPage2}]{\includegraphics[width=0.30\columnwidth]{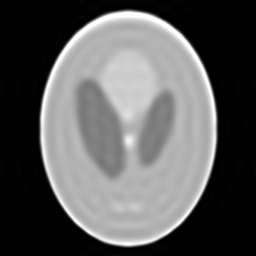}}%
  \hfil
  \subfloat[ME solution, $n = 45$. \label{fig:firstPage3}]{\includegraphics[width=0.30\columnwidth]{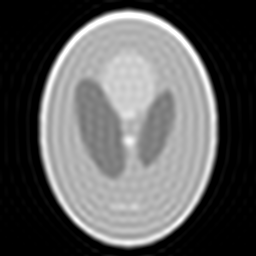}}%
  \caption{Compression of the Shepp-Logan phantom, with a compression rate of $97\%$.}%
  \label{fig:firstPage4}%
  \hfil
  \subfloat[Original image.\label{fig:true_lenna}]{\includegraphics[width=0.30\textwidth]{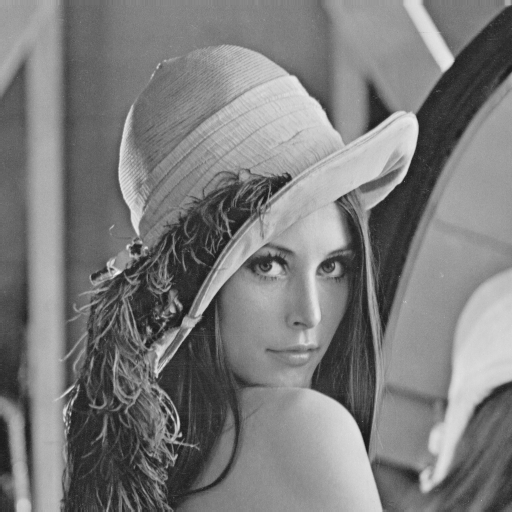}}%
  \hfil
  \subfloat[Cepstral matching, $n=60$ and $\lambda = 10^{-2}$. \label{fig:cep_lenna}]{\includegraphics[width=0.30\textwidth]{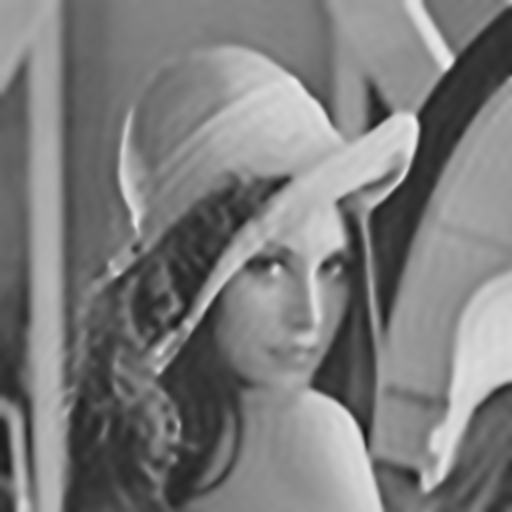}}%
  \hfil
  \subfloat[ME solution, $n = 85$. \label{fig:ME_lenna}]{\includegraphics[width=0.30\textwidth]{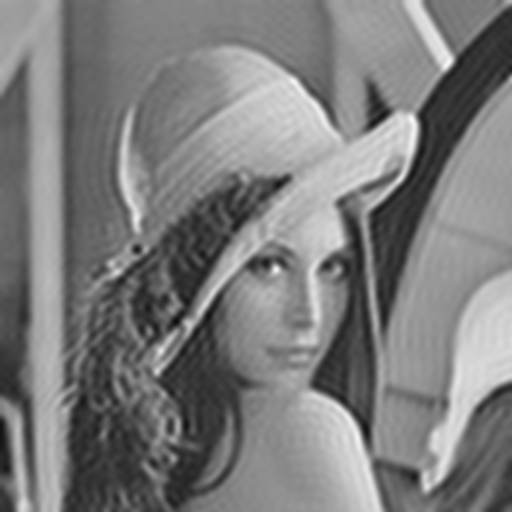}}%
  \caption{Compression of the Lenna image, with a compression rate of about $97\%$.}%
  \label{fig:Shepp_Lenna}%
\end{figure}

We now apply the methods to some more realistic images. In the first example, shown in Figure~\ref{fig:firstPage}
the original image 
is the Shepp-Logan phantom often used in medical imaging \cite{shepp1974fourier}, of size $256 \times 256$ pixels. In 
Figure~\ref{fig:firstPage2}
a compression using covariance and cepstral mathing is shown, where $n_1 + 1 = n_2 + 1 = 30$.
Hence this image is described by $2 \cdot 30^2 = 1800$ parameters, compared to the original $256^2 = 65536$ parameters, which corresponds to a reduction in parameters of about $97\%$.
We  also compute an 
ME compression, with degree $n_1 + 1 = n_2 + 1 = 45 \approx \sqrt{2} \cdot 30$ which is shown in Figure~\ref{fig:firstPage3}.

The second example is a compression of the classical Lenna image, often used in the image processing literature. The original image, shown in Figure~\ref{fig:true_lenna}, is $512 \times 512$ pixels. For regularized cepstral matching we set $n_1 + 1 = n_2 + 1 = 60$,  corresponding to a compression rate of about $97\%$, and the result is shown in Figure~\ref{fig:cep_lenna}. The ME compression, computed with $n_1 + 1 = n_2 + 1 = 85 \approx \sqrt{2} \cdot 60$,  is shown in Figure~\ref{fig:ME_lenna}.

The MSSIM values for these compressions are shown in Table \ref{tab:mssim}. 
They seem to agree with the visual impression. Interestingly the compression with cepstral matching is better for the Shepp-Logan phantom. 
However, in  the Lenna image neither of the methods outperform the other. The ME compression has more ringing artifacts, but it is less blurred than the cepstral compression. 
We believe that this is related to the fact that if you have relatively few sharp transitions in pixel values, which is the case in Figure~\ref{fig:pixel_true} and Figure~\ref{fig:firstPage},
placing both poles and zero close to each other can achieve this transition efficiently and thus give better quality on the compressed image. However when this is not the case, as with the Lenna image, the trade-off between having spectral zeros or matching higher frequencies is more complex.
\begin{table}[h]
\footnotesize
\caption{MSSIM-values of different compression techniques, on the two test images.}
\label{tab:mssim}
\vspace{-10pt}
\begin{center}
\begin{tabular}{l l l l}
\toprule
\multicolumn{2}{c}{Shepp-Logan} & \multicolumn{2}{c}{Lenna} \\
\cmidrule(r){1-2}
\cmidrule(r){3-4}
Compression  & MSSIM-value  & Compression  & MSSIM-value \\
\midrule
Cepstral     & 0.8690       & Cepstral     & 0.7451 \\
ME           & 0.7044       & ME           & 0.7489 \\
\bottomrule
\end{tabular}
\end{center}
\vspace{-10pt}
\end{table} 

Similar methods have previously been used for compression of textures \cite{Chiuso-F-P-05,picci2008modelling}, where, instead of a scalar two-dimensional moment problem, a one-dimensional vector problem is considered. Here the image is modeled by a periodic stochastic vector process rather than a two-dimensional random field, leading to a discrete vector moment problem akin to the one presented in \cite{lindquist2013thecirculant}. This is connected to the circulant moment problem considered in Section~\ref{subsec:periodProb} and to modeling of reciprocal systems \cite{levy1990modeling,Carli-F-P-P-11}.  

\appendix\section*{}

In this appendix we provide the proofs that have been deferred in the main text.
Some of the proofs use general properties of multidimensional trigonometric polynomials, summarized in this lemma.
\begin{lemma}\label{lem:trigPoly}
For all  $P \in \bar{\mathfrak{P}}_+$ we have i) $|p_{k_1 \ldots ,k_d}| \leq p_{0 \ldots, 0}$ and ii) $\|P\|_\infty \leq |\Lambda| \|p\|_\infty$.
\end{lemma}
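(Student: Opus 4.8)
The plan is to handle the two parts separately, each by an elementary Fourier estimate. For part (i), I would start from the fact that a member $P \in \bar{\mathfrak{P}}_+$ is by definition a nonnegative (hence real) function on $\mathbb{T}^d$, and that its coefficients can be recovered from the representation \eqref{Ptrigpol} via the orthogonality relation \eqref{expintegral}, namely
\[
p_\kb = \int_{\mathbb{T}^d} P(e^{i\thetab}) e^{i(\kb,\thetab)} \dm(\thetab), \qquad \kb \in \Lambda .
\]
In particular $p_{0,\dots,0} = \int_{\mathbb{T}^d} P \dm \geq 0$. Then for any $\kb \in \Lambda$,
\[
|p_\kb| = \left| \int_{\mathbb{T}^d} P(e^{i\thetab}) e^{i(\kb,\thetab)} \dm \right| \leq \int_{\mathbb{T}^d} |P(e^{i\thetab})| \dm = \int_{\mathbb{T}^d} P(e^{i\thetab}) \dm = p_{0,\dots,0},
\]
where the middle equality is exactly where nonnegativity of $P$ is used. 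This gives (i).

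For part (ii), I would apply the triangle inequality directly to the polynomial representation \eqref{Ptrigpol}: for every $\thetab \in \mathbb{T}^d$,
\[
|P(e^{i\thetab})| = \left| \sum_{\kb \in \Lambda} p_\kb e^{-i(\kb,\thetab)} \right| \leq \sum_{\kb \in \Lambda} |p_\kb| \leq |\Lambda| \max_{\kb \in \Lambda} |p_\kb| = |\Lambda|\, \|p\|_\infty .
\]
Taking the supremum over $\thetab$ yields $\|P\|_\infty \leq |\Lambda|\, \|p\|_\infty$. I would remark that (ii) in fact holds for every trigonometric polynomial supported on $\Lambda$, the hypothesis $P \in \bar{\mathfrak{P}}_+$ being needed only for (i).

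There is no genuine obstacle here; both bounds are routine. The only point meriting a line of care is that in (i) one must invoke $P \geq 0$ in order to replace $|P|$ by $P$ under the integral — without it the argument yields only $|p_\kb| \leq \int_{\mathbb{T}^d} |P| \dm$, which need not equal $p_{0,\dots,0}$.
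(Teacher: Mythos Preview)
Your proof is correct and follows essentially the same route as the paper's: both parts are handled by the same elementary Fourier estimates (the triangle inequality applied to the integral representation of $p_\kb$ for (i), and to the polynomial sum for (ii)). Your additional remark that (ii) holds for any trigonometric polynomial on $\Lambda$, with nonnegativity needed only for (i), is a useful clarification not made explicit in the paper.
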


\begin{proof}
The fact that
$|p_\kb| = \left| \int_{\mathbb{T}^ d} e^{i (\kb,\thetab)} P \dm \right| \leq \int_{\mathbb{T}^ d} |e^{i (\kb,\thetab)}| \, |P| \dm  = p_{0}$ implies i).
%
%
Next we note that $P$ has $|\Lambda|$ coefficients, and hence
\begin{displaymath}
\|P\|_\infty\leq  \sup_{\thetab \in \mathbb{T}^d} \sum_{\kb \in \Lambda} |p_{k}|  |e^{i (\kb,\thetab)}| = \sum_{\kb \in \Lambda} |p_\kb|  \leq  \displaystyle |\Lambda| \|p\|_\infty,
\end{displaymath}
which proves ii).
\end{proof}

\begin{proofWithName}{Proof of Lemma \ref{lem:Jcont}}
To show lower semicontinuity of
\begin{equation*}
\mathbb{J}_P(Q) = \langle c, q \rangle + \int_{\mathbb{T}^d} - P \log Q\,\dm
\end{equation*}
we note that $\langle c, q \rangle$ is continuous and hence only the integral needs to be considered. 

Fix any $Q \in \bar{\mathfrak{P}}_+ \setminus \{0\}$. From \cite[p. 223]{schinzel2000polynomials} we know that it is log-integrable. Moreover, let $(Q_n)$ be a sequence of trigonometric polynomials in $\bar{\mathfrak{P}}_+ \setminus \{0\}$ 
that converges to $Q$ in $\LInf$.
We know that $Q$ is bounded, and, since the convergence $Q_n\to Q$ is uniform, we must  have $M:=\sup_n \{ \max_\thetab [ Q_n ] \}< \infty$, 
and thus $0 \leq Q/M \leq 1$ and $0 \leq Q_n/M \leq 1$ for all $n$. Moreover, $\lim_{n\rightarrow \infty} -\log (Q_n/M)=-\log (Q/M)$
in extended real-valued sense.
Since  $-\log ( Q_n/M ) \geq 0$,  by Fatou's Lemma \cite[p. 23]{rudin1987real}, we have 
\[
\int_{\mathbb{T}^d} -\log \left(\frac{Q}{M}\right) \dm \leq \liminf_{n \rightarrow \infty} \int_{\mathbb{T}^d} -\log \left(\frac{Q_n}{M}\right) \dm.
\]
Since $(Q_n)$ is an arbitrary sequence, the functional is lower semicontinuous in $Q$. Moreover, since $Q$ is also arbitrary 
it follows that $\mathbb{J}_P$ is lower semicontinuous on $\bar{\mathfrak{P}}_+ \setminus \{0\}$.
\end{proofWithName}

\begin{proofWithName}{Proof of Proposition \ref{prop:d>2}}  
Let $\kb_1,\kb_2,\kb_3\in \Lambda$  be three linearly independent index
vectors. First note that the trigonometric polynomial
$Q(e^{i\thetab})=\sum_{\ell=1}^3(1-(e^{i(\kb_\ell,\thetab)}+e^{-i(\kb_\ell,
\thetab)})/2)$ is nonnegative and  $Q(e^{i{\bf 0}})=0$, hence $Q\in
\partial \mathfrak{P}_+$. Next we will show that $\int_{\mathbb{T}^d}
Q^{-1}\dm(\thetab)$ is finite. By the variable change ${\boldsymbol
\phi}=A \thetab$, where $A\in \mR^{d\times d}$ is selected to be
invertible and with $\ell$th row equal to $\kb_\ell$ for $\ell=1,2,3$, the
integral becomes 
\begin{equation*}
\int_{\mathbb{T}^d} \frac{1}{Q}\dm(\thetab)=\int_{A(\mathbb{T}^d)}
\frac{\det(A)^{-1}}{\sum_{\ell=1}^3(1-\cos(\phi_\ell))}\dm({\boldsymbol
\phi}),
\end{equation*}
where the set $A(\mathbb{T}^d)=\{A\thetab \mid \thetab\in \mT^d\}$.
Due to the periodicity of the integrand, the integral is bounded by $$\kappa\int_{\mathbb{T}^3}
\frac{d\phi_1d\phi_2d\phi_3}{\sum_{\ell=1}^3(1-\cos(\phi_\ell))}$$ for
some constant $\kappa$ that depends on $A$ and $d$. This
bound is finite  \cite{lang1982multidimensional,
karlsson2015themultidimensional}, and therefore the proposition follows.
\end{proofWithName}

To prove Theorem \ref{theo:fp}, we need the following lemma. 

\begin{lemma}\label{lem:fBiject}
$f^p$ is a bijective map.
\end{lemma}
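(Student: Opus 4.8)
\textbf{Plan for proving Lemma~\ref{lem:fBiject} (that $f^p$ is bijective).}

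The plan is to establish surjectivity and injectivity separately, using the results already obtained. Both directions rest on Corollary~\ref{theo:langMcclellan} and the uniqueness statement in Theorem~\ref{theo:conjecture}, valid here because $d\leq 2$ (so Assumption~\ref{ass:divergingIntegral} holds, via the remarks following Proposition~\ref{prop:d>2}) and because $P\in\mathfrak{P}_+$ is fixed.

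\emph{Surjectivity.} Let $c\in\mathfrak{C}_+$ be arbitrary. By Corollary~\ref{theo:langMcclellan}, for the given $P\in\mathfrak{P}_+$ there exists $Q\in\mathfrak{P}_+$ such that $d\mu=(P/Q)dm$ satisfies the moment condition \eqref{eq:Cov}; that is, $c_\kb=\int_{\mathbb{T}^d}e^{i(\kb,\thetab)}(P/Q)\,dm$ for all $\kb\in\Lambda$. This is precisely the statement $f^p(Q)=c$. Hence every $c\in\mathfrak{C}_+$ is in the image of $f^p$, so $f^p$ is onto.

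\emph{Injectivity.} Suppose $f^p(Q_1)=f^p(Q_2)=c$ with $Q_1,Q_2\in\mathfrak{P}_+$. Then both $d\mu_i=(P/Q_i)dm$, $i=1,2$, are absolutely continuous rational solutions of \eqref{eq:Cov} with the same $c$. By the Corollary to Theorem~\ref{theo:conjecture} (the first corollary in Section~\ref{sec:mainRes}), any such $Q_i$ must be \emph{the} unique minimizer over $\bar{\mathfrak{P}}_+$ of the dual functional $\mathbb{J}_P$ in \eqref{dualfunctional}; equivalently, one invokes directly the strict convexity of $\mathbb{J}_P$ and uniqueness of its minimizer established in Theorem~\ref{theo:conjecture}. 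Since the minimizer is unique, $Q_1=Q_2$. Therefore $f^p$ is one-to-one. Combining the two parts, $f^p$ is a bijection, which is the claim. (The differentiability and the smoothness of the inverse, needed for the full diffeomorphism statement of Theorem~\ref{theo:fp}, are a separate matter handled afterward via the inverse function theorem applied to the Jacobian of $f^p$; they are not part of this lemma.)

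\emph{Anticipated main obstacle.} The substantive content is almost entirely imported from earlier results, so there is little genuine difficulty; the one point requiring care is making sure the hypotheses of Corollary~\ref{theo:langMcclellan} and of the uniqueness assertion are genuinely in force. In particular one must note that $P$ is fixed in $\mathfrak{P}_+$ (not merely $\bar{\mathfrak{P}}_+\setminus\{0\}$), that $d\leq 2$ so that Assumption~\ref{ass:divergingIntegral} holds and hence the optimal $\hat Q$ lies in the interior $\mathfrak{P}_+$ with $d\hat\mu=0$ (Lemma~\ref{lem:noSolOnBoundary}), and that $\mathfrak{C}_+$ is exactly the set on which these existence/uniqueness statements apply — so that $f^p$ indeed maps onto $\mathfrak{C}_+$ and not a proper subset. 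Once these are checked, surjectivity and injectivity follow immediately as above.
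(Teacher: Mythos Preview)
Your proposal is correct and follows essentially the same route as the paper: both surjectivity and injectivity are read off from Corollary~\ref{theo:langMcclellan} (existence and uniqueness of the interior minimizer $Q$ for each $c\in\mathfrak{C}_+$ when $P\in\mathfrak{P}_+$ and $d\le 2$). Your citation of the unnamed corollary to Theorem~\ref{theo:conjecture} for injectivity is in fact slightly more precise than the paper's phrasing, but the underlying argument is identical.
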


\begin{proof}
By Corollary 
\ref{theo:langMcclellan}, $f^p$ is injective, since there is a unique minimizer of \eqref{dualfunctional}  over all $Q\in\mathfrak{P}_+$. Hence there is at most one $q$ corresponding to a certain $c$, proving  injectivity. 
Surjectivity also follows from Corollary 
\ref{theo:langMcclellan}. We fix a $P \in \mathfrak{P}_+$ and simply note that there exist a unique solution for all $c \in \mathfrak{C}_+$, given by $q = (f^p)^{-1}(c)$.
\end{proof}

\begin{proofWithName}{Proof of Theorem \ref{theo:fp}}
In the proof of Theorem~\ref{theo:conjecture} we saw that $\partial^2 \mathbb{J}_P(Q ; \delta Q) > 0$ for all nontrivial variations $\delta Q$. Hence 
\begin{equation}\label{eq:firstDerivFP}
\frac{\partial f^p_\kb}{\partial q_\lbb} = \int_{\mathbb{T}^d} e^{i(\kb-\lbb, \thetab)} \frac{P}{Q^2} \dm =\frac{\partial^2 \mathbb{J}_P(Q)}{\partial q_\lbb \partial \bar{q}_\kb}
\end{equation}
is positive definite. Next, we define the map $\varphi^p: \mathfrak{C}_+ \times \mathfrak{P}_+ \rightarrow   
\{(r_\kb)_{\kb\in \Lambda}\in \mC^{|\Lambda|}\,|\, r_{-\kb}=\bar r_{\kb}, \kb\in \Lambda\}\cong 
\mathbb{R}^{|\Lambda|}$ as
\begin{equation*}
\varphi^p_\kb (c,q) = c_\kb - \int_{\mathbb{T}^d} e^{i (\kb,\thetab)} \frac{P}{Q} \dm.
\end{equation*}
By Corollary~\ref{theo:langMcclellan}, $\gamma(c,q) = 0$ has a unique solution for each $c \in \mathfrak{C}_+$.
Since $\partial\varphi^p /\partial q = \partial f^p/\partial q$ is invertible, the Implicit Function Theorem implies that  $q = (f^p)^{-1}(c)$ is locally a $\mathcal{C}^1$ function and hence a local diffeomorphism. However, $f^p$ is a bijection (Lemma~\ref{lem:fBiject}) and therefore   a (global) diffeomorphism.
\end{proofWithName}

By Theorem~\ref{theo:fp}, the function $g^c$ is a well-defined map. The proof of Theorem \ref{theo:gc} now follows along the same lines.

\begin{lemma}\label{lem:gBij}
$g^c$ is a bijective map.
\end{lemma}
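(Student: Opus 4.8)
The plan is to deduce bijectivity entirely from Corollary~\ref{theo:langMcclellan} and the already-established fact that $g^c$ is well defined. First I would recall that, since $d\le 2$, for every $P\in\mathfrak{P}_+$ that corollary furnishes a \emph{unique} $Q=(f^p)^{-1}(c)\in\mathfrak{P}_+$ with $c_\kb=\int_{\mathbb{T}^d}e^{i(\kb,\thetab)}(P/Q)\,\dm(\thetab)$ for all $\kb\in\Lambda$, so $g^c$ genuinely maps $\mathfrak{P}_+$ into $\mathfrak{P}_+$. Surjectivity is then vacuous, because $\mathfrak{Q}_+$ is by definition the image of $g^c$; hence the content of the lemma is injectivity.

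For injectivity I would argue as follows. Suppose $g^c(P_1)=g^c(P_2)=Q$. Then $P_1/Q$ and $P_2/Q$ both satisfy the moment condition \eqref{eq:Cov} for the same $c$, so the trigonometric polynomial $R:=P_1-P_2$ — which again has the form \eqref{Ptrigpol} with coefficients indexed by $\Lambda$, and is real-valued, hence satisfies $r_{-\kb}=\bar r_\kb$ — obeys $\int_{\mathbb{T}^d}e^{i(\kb,\thetab)}(R/Q)\,\dm(\thetab)=0$ for every $\kb\in\Lambda$. Pairing this identity against the coefficient vector $r$, i.e.\ multiplying by $\bar r_\kb$ and summing over $\Lambda$, and using $-\Lambda=\Lambda$ together with the Hermitian symmetry $r_{-\kb}=\bar r_\kb$ to identify $\sum_{\kb\in\Lambda}\bar r_\kb e^{i(\kb,\thetab)}$ with $R(e^{i\thetab})$, collapses the sum to $\int_{\mathbb{T}^d}(R^2/Q)\,\dm(\thetab)=0$. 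Since $Q\in\mathfrak{P}_+$ is strictly positive on $\mathbb{T}^d$, the nonnegative integrand $R^2/Q$ vanishes almost everywhere, so $R\equiv 0$ by continuity, i.e.\ $P_1=P_2$.

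I do not anticipate a substantive obstacle: the heavy lifting is already contained in Theorem~\ref{theo:fp} and Corollary~\ref{theo:langMcclellan}, and what remains is the short positivity-plus-symmetry computation above. The only point requiring a moment's care is the bookkeeping that turns ``$\int e^{i(\kb,\thetab)}(R/Q)\,\dm=0$ for all $\kb\in\Lambda$'' into ``$\int(R^2/Q)\,\dm=0$'' — this relies on the spectrum of $R$ being contained in the symmetric index set $\Lambda$ and on $R$ being real — together with the routine upgrade from ``almost everywhere'' to ``everywhere'' via continuity of $R$.
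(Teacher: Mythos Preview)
Your argument is correct and is essentially the approach the paper takes: surjectivity is tautological by the definition of $\mathfrak{Q}_+$, and injectivity is exactly the positivity computation you wrote out, which the paper invokes by citing Lemma~2.4 of \cite{byrnes2006thegeneralized} rather than spelling it out. The bookkeeping you flag (identifying $\sum_{\kb\in\Lambda}\bar r_\kb e^{i(\kb,\thetab)}$ with $R(e^{i\thetab})$ via $-\Lambda=\Lambda$ and $r_{-\kb}=\bar r_\kb$) is indeed the only point requiring care, and you have it right.
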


\begin{proof}
Surjectivity of $g^c$  on the image $\mathfrak{Q}_+$ follows directly from  definition. A straight-forward generalization of Lemma~2.4 in \cite{byrnes2006thegeneralized} shows that $g^c$ is injective.
\end{proof}

\begin{proofWithName}{Proof of Theorem \ref{theo:gc}}
Let the map $\varphi^c: \mathfrak{P}_+ \times \mathfrak{P}_+ \rightarrow \{(r_\kb)_{\kb\in \Lambda}\in \mC^{|\Lambda|}\,|\, r_{-\kb}=\bar r_{\kb}, \kb\in \Lambda\}\cong 
\mathbb{R}^{|\Lambda|}$ be given by
\begin{equation*}
\displaystyle \varphi^c_\kb (p,q) = c_\kb - \int_{\mathbb{T}^d} e^{i(\kb, \thetab)} \frac{P}{Q} \dm.
\end{equation*}
The Jacobian with respect to $q$ is the same as \eqref{eq:firstDerivFP}.
Hence $q = g^c(p)$ is $\mathcal{C}^1$ by the  Implicit Function Theorem.
Since \eqref{eq:firstDerivFP} gives a positive definite Jacobian matrix,
\begin{equation*}
\frac{\partial\varphi^c_\kb}{\partial p_\lbb} = - \int_{\mathbb{T}^d} e^{i(\kb-\lbb,\thetab)} \frac{1}{Q} \dm
\end{equation*}
defines a  invertible Jacobian. Hence $p = (g^c)^{-1}(q)$ is $\mathcal{C}^1$, so $g^c$ is a local diffeomorphism. Since it is a bijection (Lemma~\ref{lem:gBij}), it is  a (global) diffeomorphism.
\end{proofWithName}

\begin{proofWithName}{Proof of Lemma \ref{lem:JcontCeps}}
For any $Q \in \bar{\mathfrak{P}}_+ \setminus \{0\}$, $\log Q$ is integrable \cite[p. 223]{schinzel2000polynomials}.
Since $P \in \bar{\mathfrak{P}}_{+,\circ}$, $P$ is not the zero-polynomial, hence, since  $x\log x \rightarrow 0$ as $x \rightarrow 0$, $P \log P$ is integrable and in fact continuous for all $P \in \bar{\mathfrak{P}}_{+,\circ}$. Hence
\[
\int_{\mathbb{T}^d}\!\! P \log P\, \dm - \int_{\mathbb{T}^d} P \log Q\, \dm = \int_{\mathbb{T}^d}\!\! P \log\left(\frac{P}{Q}\right) \dm,
\]
and therefore we can rewrite the functional $\mathbb{J}(P,Q)$ as
\[
\mathbb{J}(P,Q) = \langle c, q \rangle - \langle \gamma, p \rangle + \int_{\mathbb{T}^d}\!\!  P \log P\, \dm - \int_{\mathbb{T}^d}  \!\!  P\log Q\, \dm.
\]
All terms in this expression are continuous, except possibly the last integral.
However, following along the same lines as in the proof of Lemma \ref{lem:Jcont}, 
 we can apply Fatou's Lemma showing that $\mathbb{J}(P,Q)$ is lower semicontinuous.
\end{proofWithName}

\begin{proofWithName}{Proof of Lemma \ref{lm:jCompSublevel}}
To show that $\mathbb{J}^{-1}(-\infty, r]$ have compact sublevel sets, we proceed as in \cite[p. 503]{LindquistPicci2015} by first splitting the objective function into two parts
\begin{displaymath}
\mathbb{J}_1(P,Q) = \langle c, q \rangle - \int_{\mathbb{T}^d} P \log  Q \, \dm \quad\text{and}\quad
\mathbb{J}_2(P) = - \langle \gamma, p \rangle + \int_{\mathbb{T}^d} P \log  P\, \dm .
\end{displaymath}
The sublevel set consists of the $(P, Q) \in \bar{\mathfrak{P}}_{+,\circ} \times \bar{\mathfrak{P}}_+$ such that
$r \geq \mathbb{J}_1(P,Q) + \mathbb{J}_2(P)$,
and from Lemma \ref{lemma:linearAndLogarithmicGrowth} we have
$\mathbb{J}_1(P,Q) \geq \varepsilon \|Q\|_\infty + \ \log\|Q\|_\infty$,
since $\int_{\mathbb{T}^d} P \dm = 1$ by \eqref{expintegral}.
Next we show that  $\mathbb{J}_2(P)$ is bounded from below. We first note that since $P \in \bar{\mathfrak{P}}_{+,\circ}$ we have  $p_{0} = 1$, and thus $P$ is bounded away from the zero polynomial. Now, since $x \log(x)$ achieves a minimum $> -\infty$ on any compact set $[0, a]$, $P \log P$ must achieve a minimum $> -\infty$ on $\mathbb{T}^d$. Calling this minimum $\kappa_P$, we have
\begin{equation*}
\int_{\mathbb{T}^d} P \log  P\, \dm \geq \int_{\mathbb{T}^d} \kappa_P \dm = \kappa_P
\end{equation*}
To bound the term $- \langle \gamma, p \rangle$ from below we note that
\begin{equation*} 
\langle \gamma, p \rangle = \sum_{\kb \in \Lambda} \bar{\gamma}_\kb p_\kb \leq \left| \sum_{\kb \in \Lambda} \bar{\gamma}_\kb p_\kb \right| \leq \sum_{\kb \in \Lambda} |\bar{\gamma}_\kb| \,  |p_\kb| 
\leq \sum_{\kb \in \Lambda} \|\gamma\|_\infty  |p_\kb| \leq \|\gamma\|_\infty |\Lambda| \|p\|_\infty
\end{equation*}
and thus $- \langle \gamma, p \rangle \geq -|\Lambda| \|\gamma\|_\infty \|p\|_\infty=-|\Lambda| \|\gamma\|_\infty$, since $\|p_\infty\| = p_0 = 1$ by Lemma \ref{lem:trigPoly}. Hence there exist some $\rho > -\infty$ such that $\mathbb{J}_2(P) \geq \rho$. From this we have
\begin{equation*}
r - \rho \geq \mathbb{J}_1(P,Q) \geq \varepsilon \|Q\|_\infty + \log\|Q\|_\infty ,
\end{equation*}
so comparing linear and logarithmic growth we see that the set is bounded both from above and below. As before, since it is the sublevel set of a lower semicontinuous function it will be closed, and hence it is compact.
\end{proofWithName}

\begin{proofWithName}{Proof of Lemma \ref{lm:jCepsConv}}
Consider the directional derivative of $\mathbb{J}$ in a point $(P,Q) \in \bar{\mathfrak{P}}_{+,\circ} \times \in \bar{\mathfrak{P}}_{+}$ in any direction $(\delta P, \delta Q)$ such that $P + \varepsilon \delta P \in \bar{\mathfrak{P}}_{+,\circ}$, and $Q + \varepsilon \delta Q \in \bar{\mathfrak{P}}_{+}$ for all  $\varepsilon\in (0, a)$  for some $a > 0$. A quite straight-forward calculation yields 
\begin{equation*}
\delta \mathbb{J}(P,Q; \delta P, \delta Q) =\langle c, \delta q \rangle - \langle \gamma, \delta p \rangle
+ \int_{\mathbb{T}^d} \left[\delta P \log \left( \frac{P}{Q} \right) - \delta Q \frac{P}{Q}\right] \dm.
\end{equation*}
where we have used the fact, obtained from \eqref{expintegral}, that 
$\int_{\mathbb{T}^d} \delta P \dm = \delta p_{0} = 0$,
since $p_0=1$ is constant. Likewise, the  second directional derivative becomes
\begin{equation*}
\begin{split}
 \delta^2 \mathbb{J}(P,Q; \delta P, \delta Q) =\int_{\mathbb{T}^d} P \left( \delta P \frac{1}{P} - \delta Q \frac{1}{Q} \right)^2 \dm,
\end{split}
\end{equation*}
which is clearly nonnegative for all feasible directions and hence positive semi-definite. Thus the problem is convex.
\end{proofWithName}

\begin{proofWithName}{Proof of Lemma \ref{lm:limN}} 
First note that $\CP(\Nb)\subset\CP$. To prove the lemma, it is sufficient to prove that any $c\in \CP$ belongs to $\CP(\Nb)$ if $\min(\Nb)$ is large enough. 

Let $c\in \CP$. From \eqref{eq:mcineq} there exists $\kappa_c>0$ such that 
\begin{equation}\label{eq:mc2}
\langle c,p \rangle \ge  \kappa_c \|p\|_\infty, \quad \mbox{ for all } p\in \PPC.
\end{equation}
We want to show that $\langle c,\hat p\rangle>0$ for any $\hat p\in \PPC(\Nb)\setminus\{0\}$. Without loss of generality we may take 
$\|\hat p\|_\infty=1$. Then  $|\partial\hat P(e^{i\thetab})/\partial\theta_j|\le \sum_{\kb\in \Lambda}|k_j|$, and, since $\hat P(e^{i\thetab})\ge 0$ in $\thetab\in \mT_\Nb$,  it follows that $\hat P(e^{i\thetab})\ge -\pi\Delta/\min(\Nb)$ where $\Delta=\sum_{\kb\in \Lambda}\|\kb\|_1$. Therefore $\hat P+\pi\Delta/\min(\Nb)\in \PPC$, and by using \eqref{eq:mc2} we get 
\begin{equation*}
\langle c,\hat p \rangle +c_0 \frac{\pi\Delta}{\min(\Nb)} \ge  \kappa_c \left(\|\hat p\|_\infty-\frac{\pi\Delta}{\min(\Nb)}\right).
\end{equation*}
By selecting $\min(\Nb)>\pi\Delta(1+c_0/\kappa_c)$, we obtain $\langle c,\hat p \rangle >0$. 
Since $\hat p\in\PPC(\Nb)\setminus\{0\}$  is arbitrary, it therefore follows that $c\in \CP(\Nb)$.
\end{proofWithName}

\begin{proofWithName}{Proof of Lemma \ref{lem:seqBounded}}
For a fixed $\tilde{Q} \in \mathfrak{P}_+$ we have 
$\lim_{\min(\Nb) \rightarrow \infty} \mathbb{J}_P^{\Nb} (\tilde{Q}) = \mathbb{J}_P (\tilde{Q})$,
since the sums in \eqref{eq:JdiscConv} are Riemann sums converging to \eqref{eq:JcontConv}. Hence we can define
$L := \sup_{N} \mathbb{J}_P^{\Nb} (\tilde{Q}) < \infty$.
Also, by optimality, $\infty > \mathbb{J}_P^{\Nb} (\tilde{Q}) \geq \mathbb{J}_P^{\Nb} (\hat{Q}_{\Nb})$ for all values of $\Nb$
and also $\infty > \mathbb{J}_P (\tilde{Q}) \geq \mathbb{J}_P (\hat{Q})$. 
Using this and Lemma~\ref{lemma:linearAndLogarithmicGrowth} we obtain
\begin{displaymath}
L \geq\mathbb{J}_P^{\Nb} (\tilde{Q}) \geq \mathbb{J}_P^{\Nb} (\hat{Q}_{N})\geq \varepsilon_{\Nb} \| \hat{Q}_{\Nb}\|_\infty - \|P\|_1 \| \log(\hat{Q}_{\Nb})\|_\infty
\end{displaymath}
for all values of $\Nb$.
In accordance with \eqref{eq:cqApprox2},  we can choose $\varepsilon_{\Nb} := \kappa_c^{\Nb}/|\Lambda|$, where $\kappa_c^{\Nb}$ is the minimum value of $\langle c,q_\Nb \rangle$ on the compact set $\{Q \in \bar{\mathfrak{P}}_+(\Nb) \mid \|q\|_{\infty} = 1\}$.
 If we can show $\kappa_c := \inf_{\Nb} \kappa_c^{\Nb} > 0$, we can choose $\varepsilon := \kappa_c/|\Lambda| \leq \varepsilon_\Nb$ for all $\Nb$, so that
\begin{equation*}
\begin{array}{l}
L \geq \varepsilon \|\hat{Q}_\Nb\|_\infty - \|P\|_1 \| \log(\hat{Q}_{\Nb})\|_\infty .
\end{array}
\end{equation*}
Then comparing linear and logarithmic growth this implies that  $(\hat Q_{\Nb})$ is bounded.

To show that $\kappa_c > 0$ first note that for every finite value of $\min(\Nb)$ we have $\kappa_c^{\Nb} > 0$. Now assume $\inf_{\Nb} \kappa_c^{\Nb} = 0$. Then there must exist a sequence $(q_\Nb^\star)$ such that $\langle c, q_{\Nb}^\star \rangle \rightarrow 0$ as $\min(\Nb) \rightarrow \infty$, where
$q_{\Nb}^\star\in \bar{\mathfrak{P}}_+(\Nb)$ and $\|q_\Nb^\star\|_\infty = 1$.
Now, since every $q_{\Nb}^\star$ is  a vector in $\mathbb{C}^{|\Lambda|}$, the constraint $\|q\|_\infty = 1$ defines a compact set. Hence there is a subsequence, also indexed with $\Nb$, so that $q^\star := \lim_{\min(\Nb)\to\infty} q_{\Nb}^\star$ is well-defined and $\|q^\star\|_\infty = 1$. Then $\langle c, q^\star \rangle = 0$. However, since $c \in \mathfrak{C}_+$ and $q^\star\in \bar{\mathfrak{P}}_+$, this implies that $q^\star = 0$, which contradicts $\|q^\star\|_\infty = 1$. Hence $\kappa_c > 0$, as claimed. 
\end{proofWithName}

\bibliographystyle{siam}
\bibliography{ref}

\end{document}